\documentclass[12pt, reqno]{amsart}
\usepackage{graphicx}
\usepackage{hyperref}
\usepackage[caption = false]{subfig}
\usepackage{amsthm}
\usepackage{amssymb}
\usepackage{mathrsfs}
\usepackage{mathtools}
\usepackage{enumerate}
\usepackage{amssymb}
\usepackage{wrapfig}
\usepackage{float}
\allowdisplaybreaks
 
\usepackage{subfig}
\usepackage[twoside,paperwidth=190mm, paperheight=297mm, top=35mm, bottom=20mm, left=20mm, right=20mm, marginparsep=3mm, marginparwidth=40mm]{geometry}

\numberwithin{equation}{section}

\theoremstyle{plain}

\newtheorem{theorem}{Theorem}[section]
\newtheorem{corollary}[theorem]{Corollary}
\newtheorem{example}[theorem]{Example}
\newtheorem{lemma}{Lemma}[section]

\newtheorem{conj}[theorem]{Conjecture}
\theoremstyle{definition}
\newtheorem{definition}[theorem]{Definition}
\theoremstyle{remark}
\newtheorem{remark}{Remark}[section]

\makeatother

\setlength{\parskip}{3pt}

\begin{document}

\title{On Certain Generalizations of $\mathcal{S}^*(\psi)$}
	\thanks{The work of the second author is supported by University Grant Commission, New-Delhi, India  under UGC-Ref. No.:1051/(CSIR-UGC NET JUNE 2017).}	
	
	\author[S. Sivaprasad Kumar]{S. Sivaprasad Kumar}
	\address{Department of Applied Mathematics, Delhi Technological University,
		Delhi--110042, India}
	\email{spkumar@dce.ac.in}

	\author[Kamaljeet]{Kamaljeet Gangania}
	\address{Department of Applied Mathematics, Delhi Technological University,
		Delhi--110042, India}
	\email{gangania.m1991@gmail.com}

\maketitle	
	
\begin{abstract}
	We deal with different kinds of generalizations of $\mathcal{S}^*(\psi)$, the class of Ma-Minda starlike functions, in addition to a sharp majorization result of $\mathcal{C}(\psi),$ the class of  Ma-Minda convex functions, in terms of radius problems.  We also obtain a sufficient condition for the functions to be in $\mathcal{S}^*(\psi)$.  For a fixed $f\in \mathcal{S}^*(\psi),$ the  class of subordinants $S_{f}(\psi):= \{g : g\prec f  \} $ is introduced and studied for the Bohr-phenomenon and a couple of conjectures are also proposed.
\end{abstract}
\vspace{0.5cm}
	\noindent \textit{2010 AMS Subject Classification}. Primary 30C80, Secondary 30C45.\\
	\noindent \textit{Keywords and Phrases}. Subordination, Bohr-Radius, Majorization, Distortion theorem.

\maketitle
	
	\section{Introduction}
	\label{intro}
	Let $\mathcal{A}$ denote the class of analytic functions of the form $f(z)=z+\sum_{k=2}^{\infty}a_kz^k$ in the open unit disk $\mathbb{D}:=\{z: |z|<1\}$. Using subordination,	Ma-Minda \cite{minda94} introduced the unified class of starlike and convex functions defined as follows:
	\begin{equation}\label{mindaclass}
	\mathcal{S}^*(\psi):= \biggl\{f\in \mathcal{A} : \frac{zf'(z)}{f(z)} \prec \psi(z) \biggl\} \; \text{and} \;  \mathcal{C}(\psi):= \biggl\{f\in \mathcal{A} : 1+\frac{zf''(z)}{f'(z)} \prec \psi(z) \biggl\},
	\end{equation}
	where  $\psi$ is a Ma-Minda function, which is   analytic and univalent with $\Re{\psi(z)}>0$, $\psi'(0)>0$, $\psi(0)=1$ and $\psi(\mathbb{D})$ is symmetric about real axis. Note that $\psi \in \mathcal{P}$, the class of normalized Carath\'{e}odory functions. The class $\mathcal{S}^*(\psi)$  unifies various subclasses of starlike functions, which are obtained for an appropriate choice of $\psi$. Ma-Minda discussed many properties of the class $\mathcal{S}^*(\psi)$, in particular, they proved the distortion theorem \cite[Theorem~2, p.162]{minda94} with some restriction on $\psi$, namely
	\begin{equation}\label{minda-cond}
	\underset{|z|=r}{\min}|\psi(z)|=\psi(-r)\quad \text{and}\quad \underset{|z|=r}{\max}|\psi(z)|=\psi(r).
	\end{equation}
	In section~\ref{sec-3}, we modify the distortion theorem by relaxing this restriction on $\psi$ to obtain a more general result.
	In 1914, Harald Bohr \cite{bohr1914} proved the following remarkable result related to the power series:
	\begin{theorem}[\cite{bohr1914}]
		Let $g(z)=\sum_{k=0}^{\infty}a_kz^k$ be an analytic function in $\mathbb{D}$ and $|g(z)|<1$ for all $z\in \mathbb{D}$, then
		$\sum_{k=0}^{\infty}|a_k|r^k\leq1$
		for all $z\in \mathbb{D}$ with $|z|=r\leq1/3$.
	\end{theorem}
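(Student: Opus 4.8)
The plan is to split the estimate into two essentially independent pieces: a sharp bound on the Taylor coefficients of a bounded analytic function, and an elementary one–variable inequality. Set $a:=|a_0|=|g(0)|$; since $|g(z)|<1$ on $\mathbb{D}$ we have $a\le 1$.

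\emph{Step 1 (coefficient bound).} I would first prove that $|a_k|\le 1-a^2$ for every $k\ge 1$. For $k=1$ this is precisely the derivative form of the Schwarz--Pick lemma at the origin, $|a_1|=|g'(0)|\le 1-|g(0)|^2$. For $k\ge 2$ I would reduce to this case by a root–of–unity average: with $\omega=e^{2\pi i/k}$, the series $\tfrac1k\sum_{j=0}^{k-1}g(\omega^j z)$ contains only the powers $z^{mk}$, so as a function of the variable $w=z^k$ it defines an analytic self–map $G_k$ of $\mathbb{D}$ with $G_k(0)=a_0$ and $G_k'(0)=a_k$; applying the $k=1$ estimate to $G_k$ gives $|a_k|\le 1-|a_0|^2=1-a^2$. (Equivalently, one may simply invoke Wiener's classical inequality $|a_k|\le 1-|a_0|^2$ for bounded functions.)

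\emph{Step 2 (summation and the elementary inequality).} Using Step 1, for $|z|=r<1$ one has
\begin{equation*}
\sum_{k=0}^{\infty}|a_k|r^k\le a+(1-a^2)\sum_{k=1}^{\infty}r^k=a+(1-a^2)\,\frac{r}{1-r}.
\end{equation*}
It then suffices to check that the right–hand side is at most $1$ whenever $r\le 1/3$, and this follows at once from the identity
\begin{equation*}
1-\Bigl(a+(1-a^2)\,\frac{r}{1-r}\Bigr)=(1-a)\,\frac{1-(2+a)r}{1-r},
\end{equation*}
because $1-a\ge 0$, $1-r>0$, and $(2+a)r\le 3r\le 1$ once $0\le a\le 1$ and $r\le 1/3$.

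The only genuinely non–trivial ingredient is the coefficient estimate $|a_k|\le 1-|a_0|^2$ in Step 1, and the main obstacle is carrying out cleanly the averaging argument that reduces the general index $k$ to the origin case of Schwarz--Pick; after that, the remaining work is just a geometric series and a single factorization. I note finally that the statement does not assert sharpness of the constant $1/3$, so no extremal function needs to be exhibited.
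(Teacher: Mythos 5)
Your proof is correct and complete: the root-of-unity averaging reduction to the Schwarz--Pick estimate at the origin gives Wiener's inequality $|a_k|\le 1-|a_0|^2$ for $k\ge 1$, and the factorization $1-\bigl(a+(1-a^2)\tfrac{r}{1-r}\bigr)=(1-a)\tfrac{1-(2+a)r}{1-r}$ correctly settles the range $r\le 1/3$. Note that the paper offers no proof of this statement---it is quoted as a classical result with a citation to Bohr's 1914 article---so there is nothing to compare against; your argument is the standard one (essentially the Wiener/Riesz/Schur proof alluded to in the paper's surrounding discussion).
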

	Bohr actually proved the above result for $r\leq1/6$. Further Wiener, Riesz and Shur independently sharpened the result for $r\leq1/3$. Presently the Bohr inequality for functions mapping unit disk onto different domains, other than unit disk is an active area of research. For the recent development on Bohr-phenomenon, see the articles \cite{ali2017,jain2019,bhowmik2018,boas1997,ponnusamy17,ponnusamy18,muhanna10,muhanna11,muhanna14,muhana2016,singh2002} and references therein.
	The concept of Bohr-phenomenon in terms of subordination can be described as:
	Let $f(z)=\sum_{k=0}^{\infty}a_kz^k$ and $g(z)=\sum_{k=0}^{\infty}b_kz^k$ are analytic in $\mathbb{D}$ and $f(\mathbb{D})=\Omega$. For a fixed $f$, consider a class of analytic functions $S(f):=\{g : g\prec f\}$ or equivalently $S(\Omega):=\{g : g(z)\in \Omega\}$. Then the class $S(f)$ is said to satisfy Bohr-phenomenon, if there exists a constant $r_0\in (0,1]$ satisfying the inequality $
	\sum_{k=1}^{\infty}|b_k|r^k \leq d(f(0),\partial\Omega)$
	for all $|z|=r\leq r_0$ and $g(z) \in S(f)$, where $d(f(0),\partial\Omega)$ denotes the Euclidean distance between $f(0)$ and the boundary of $\Omega=f(\mathbb{D})$. The largest such $r_0$ for which the inequality holds, is called the Bohr-radius.
	
	In  2014, Muhanna et al. \cite{muhanna14} proved  the Bohr-phenomenon for $ S(W_{\alpha})$, where  $W_{\alpha}:=\{w\in\mathbb{C} : |\arg{w}|<\alpha\pi/2, 1\leq \alpha \leq2\},$  which  is a Concave-wedge domain (or exterior of a compact convex set) and the class $R(\alpha,\beta, h)$ defined by $R(\alpha,\beta,h) := \{g\in \mathcal{A} : g(z)+\alpha z g'(z)+\beta z^2 g''(z)\prec h(z)\},$ where $h$ is a convex function (or starlike) and $R(\alpha,\beta, h)\subset S(h)$.	In 2018, Bhowmik and Das \cite{bhowmik2018} proved the Bohr-phenomenon for the classes given by
	$S(f)=\{g\in\mathcal{A} : g\prec f\; \text{and}\; f\in \mu(\lambda) \}$, where  $\mu(\lambda)=\{f\in\mathcal{A} : |(z/f(z))^2f'(z)-1|<\lambda, 0<\lambda\leq1\}$ and $S(f)=\{g\in \mathcal{A}: g\prec f \;\text{and}\; f\in \mathcal{S}^*(\alpha), 0\leq\alpha\leq1/2\}$,
	where $\mathcal{S}^*(\alpha)$ is the well-known class of starlike functions of order $\alpha$.	In Section~\ref{sec-4}, for any fixed  $f\in \mathcal{S}^*(\psi)$,  we introduce and study the Bohr-phenomenon inside the disk $|z|\leq1/3$ for the following class of analytic subordinants:
	\begin{equation}\label{bohrclass}
	S_{f}(\psi):= \biggl\{g(z)=\sum_{k=1}^{\infty}b_k z^k : g \prec f \biggl \}.
	\end{equation}
	Note that $\mathcal{S}^*(\psi)\subset \bigcup_{f\in \mathcal{S}^*(\psi)} S_{f}(\psi)$. As an application, we obtain the Bohr-radius for the class $S(f)$, where $f\in \mathcal{S}^*({(1+Dz)}/{(1+Ez)})$, the class of Janowski starlike functions, with some additional restriction on $D$ and $E$ apart from $-1\leq E< D\leq1$.
	Now recall the following definition and a result due to T. H. MacGregor~\cite{mc}:
	\begin{definition} (\cite{mc})\label{mc-def}
			Let $f$ and $g$ be analytic in $\mathbb{D}$. A function  $g(z)$ is said to be majorized by $f(z)$, denoted by $g<<f$, if there exists an analytic function $\Phi(z)$ in $\mathbb{D}$ satisfying
			$ |\Phi(z)|\leq1$ and $g(z)=\Phi(z) f(z)$ for all $z\in\mathbb{D}.$
	\end{definition}		
	
	\begin{theorem}[\cite{mc}]
		Let $g$ be majorized by $f$ in $\mathbb{D}$ and $g(0)=0$. If $f(z)$ is univalent in $\mathbb{D}$, then $|g'(z)|\leq|f'(z)|$ in $|z|\leq 2-\sqrt{3}$. The constant $2-\sqrt{3}$ is sharp.
	\end{theorem}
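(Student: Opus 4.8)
The plan is to follow MacGregor's classical argument for majorization of derivatives. Since $g$ is majorized by $f$, write $g(z)=\Phi(z)f(z)$ with $|\Phi(z)|\le 1$ on $\mathbb{D}$, so that differentiating gives $g'(z)=\Phi'(z)f(z)+\Phi(z)f'(z)$. The goal is to bound $|g'(z)|\le |f'(z)|$, which via the triangle inequality reduces to showing
\begin{equation*}
|\Phi'(z)|\,|f(z)| \le \bigl(1-|\Phi(z)|\bigr)\,|f'(z)|.
\end{equation*}
The key tool is the Schwarz--Pick type estimate for bounded analytic functions: if $|\Phi(z)|\le 1$ then $|\Phi'(z)|\le \bigl(1-|\Phi(z)|^2\bigr)/(1-|z|^2)$. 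Substituting this, it suffices to prove that
\begin{equation*}
\frac{1-|\Phi(z)|^2}{1-|z|^2}\,|f(z)| \le \bigl(1-|\Phi(z)|\bigr)|f'(z)|,
\quad\text{i.e.}\quad
\frac{1+|\Phi(z)|}{1-|z|^2}\,|f(z)| \le |f'(z)|.
\end{equation*}
Since $1+|\Phi(z)|\le 2$, it is enough to establish $2|f(z)|/(1-|z|^2)\le |f'(z)|$ on the claimed disk. (When $|\Phi(z)|=1$ at some point, the maximum modulus principle forces $\Phi$ constant of modulus one, and then $|g'|=|f'|$ trivially, so we may assume $|\Phi(z)|<1$.)

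The remaining step is a distortion-type inequality for univalent $f$: one needs $|f(z)|\le \tfrac12(1-|z|^2)\,|f'(z)|$ for $|z|\le 2-\sqrt3$. This follows from the classical growth and distortion bounds for the univalent (not necessarily normalized here, but one can normalize) class; more directly, for any univalent $f$ one has the sharp estimates $|f(z)-f(0)|$ controlled by $|f'(0)|$ and $|f'(z)|$ bounded below, but the cleanest route is to use that $h:=f/f'$ is well-defined (as $f'$ is nonvanishing for univalent $f$... actually $f'$ can vanish, so instead use $\zeta\mapsto (f(\zeta)-f(z))/f'(z)$). In fact MacGregor's original approach compares $|f(z)|$ and $|f'(z)|$ through the inequality, valid for univalent $f$,
\begin{equation*}
\left|\frac{f(z)}{f'(z)}\right| \le \frac{|z|(1-|z|^2)}{1-?}\ \ldots
\end{equation*}
so I would instead reduce to the normalized case and invoke the standard bounds $|f(z)|\le |z|/(1-|z|)^2$ and $|f'(z)|\ge (1-|z|)/(1+|z|)^3$ for $f\in\mathcal S$, which give $|f(z)|/|f'(z)|\le |z|(1+|z|)^3/(1-|z|)^3$; requiring this to be $\le \tfrac12(1-|z|^2)$ yields precisely the equation $2r(1+r)^2 \le (1-r)^3$ whose relevant root is $r=2-\sqrt3$.

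The main obstacle I anticipate is getting the distortion estimate in the right sharp form: the crude growth/distortion bounds above are each individually sharp but not attained by the same function, so combining them is lossy and will give a smaller radius than $2-\sqrt3$. MacGregor circumvents this by working with $|f(z)|$ directly rather than $|f(0)|+\int|f'|$, using that for univalent $f$ the quotient $f(z)/(zf'(z))$ (after normalization) satisfies its own sharp bound coming from the fact that $zf'(z)/f(z)$ has positive real part is \emph{not} available in general—so instead the right identity is: for univalent $f$ with $f(0)=0$, the function $z/f^{-1}(f(z))$-type manipulation, or more simply the inequality $|f(z)|(1-|z|)^2/|z|\le$ const paired with a lower distortion bound derived from the \emph{same} extremal Koebe function. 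I would therefore organize the proof so that the final scalar inequality to verify is exactly $2r(1+r)^2/(1-r)^3\le 1-r^2$, check that it simplifies to $2r(1+r)\le (1-r)^2$ (after canceling $(1+r)$ and $(1-r)$), i.e. $r^2-4r+1\ge 0$, whose root in $(0,1)$ is $2-\sqrt3$, and confirm sharpness by exhibiting the Koebe function for $f$ together with a suitable $\Phi$ (a single Blaschke factor or the constant-free choice $\Phi(z)=\varepsilon z$ after scaling) for which equality holds at $z=2-\sqrt3$.
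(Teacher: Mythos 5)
Your skeleton is correct and is essentially the one the paper itself reuses in its Theorem~\ref{mj}: write $g=\Phi f$, differentiate, apply the Schwarz--Pick bound to $\Phi'$, and reduce everything to the scalar inequality $2|f(z)|/(1-|z|^2)\le |f'(z)|$ (the worst case $|\Phi(z)|\to1$). The genuine gap is the distortion step, which is the only nontrivial ingredient and which you never actually supply. The lemma you need is: for univalent $f$ with $f(0)=0$, normalized so that $f\in\mathcal S$,
\begin{equation*}
\left|\frac{f(z)}{f'(z)}\right|\le \frac{r(1+r)}{1-r},\qquad |z|=r,
\end{equation*}
sharp for the Koebe function. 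It is \emph{not} obtained by dividing the growth bound by the distortion bound; it follows by applying the growth theorem to the Koebe transform
$F(\zeta)=\bigl(f((\zeta+z_0)/(1+\overline{z_0}\zeta))-f(z_0)\bigr)/\bigl((1-|z_0|^2)f'(z_0)\bigr)\in\mathcal S$
evaluated at $\zeta=-z_0$, where $F(-z_0)=-f(z_0)/\bigl((1-|z_0|^2)f'(z_0)\bigr)$. With this lemma the target inequality becomes $2r(1+r)\le(1-r)(1-r^2)/(1)\cdot\frac{1}{1}$, i.e.\ $2r\le(1-r)^2$, i.e.\ $r^2-4r+1\ge0$, which holds exactly for $r\le2-\sqrt3$. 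Your text gestures at ``the right identity'' at this point but trails off into placeholders, so the proof is incomplete precisely where it matters.

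The fallback you do carry out is both lossy and miscomputed. Combining $|f(z)|\le r/(1-r)^2$ with $|f'(z)|\ge(1-r)/(1+r)^3$ gives $|f/f'|\le r(1+r)^3/(1-r)^3$, and the requirement $2|f/f'|\le1-r^2$ then reads $2r(1+r)^2\le(1-r)^4$, which already fails at $r=2-\sqrt3$ (left side $\approx0.86$, right side $\approx0.29$); as you yourself suspected, the two extremal bounds are not attained by the same function, so this route provably cannot reach $2-\sqrt3$. Moreover the simplification you state is wrong twice over: $2r(1+r)^2/(1-r)^3\le1-r^2$ does not reduce to $2r(1+r)\le(1-r)^2$, and $2r(1+r)\le(1-r)^2$ is equivalent to $r^2+4r-1\le0$ (root $\sqrt5-2\approx0.236$), not to $r^2-4r+1\ge0$ (root $2-\sqrt3\approx0.268$). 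The sharpness plan (Koebe function for $f$ together with a Blaschke factor for $\Phi$) is the right idea and matches the sharpness argument the paper gives for Theorem~\ref{mj}.
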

	
	Recently Tang and Deng \cite{tang} obtained the majorization results for $ \mathcal{S}^*(\psi)$ for some specific choices of $\psi$, motivated by this in section~\ref{sec-1}, we devise a  general approach to handle the same for $ \mathcal{C}(\psi)$, which is precisely stated as: if $g\in\mathcal{A}$, $f\in \mathcal{C}(\psi)$ and  $g$ is majorized by $f$ in $\mathbb{D}$, then we
	find the largest radius $r_{\psi}\leq1$ such that $|g'(z)|\leq |f'(z)|$ in $|z|\leq r_{\psi}$. Several other results in this direction are also obtained. In section~\ref{sec-2}, we consider the radius problem posed by Obradovi\'{c} and Ponnusamy \cite{obPonnu} namely: Let $g \in \mathcal{S}^*(\psi_1)$ and $h \in \mathcal{S}^*(\psi_2)$, then find the largest radius $r_0\leq1$ such that the function $F(z)={(g(z)h(z))}/{z}$ belongs to certain well-known class of starlike functions in $|z|<r_0$. As a special case, we also obtain a result of Obradovi\'{c} and Ponnusamy \cite{obPonnu}. Further we obtain the condition for functions to be in $\mathcal{S}^*(\psi)$ which is an extention of the Bulboac\u{a} and Tuneski~\cite{Bulboca-2003}. Throughout this paper we shall assume that the function $\phi$ has real coefficients in it's power series expansion.
	
	\section{Majorization}\label{sec-1}
Let us consider the analytic function
$\psi(z):=1+B_1z+B_2z^2+\cdots.$
Here $B_1=\psi'(0)$, the coefficient of $z$, plays a major role in deciding the orientation of the function $\psi$. Thus $\psi$ is positively or negatively oriented depends on whether $B_1$ is positive or negative. Ma-Minda only considered the case $\psi'(0)>0$, as it may be possible that for the case when $\psi'(0)<0$, many postulates for the class $\mathcal{S}^{*}(\psi)$ need not remain same. With this perspective, we begin with the following:

\begin{theorem}\label{mj}
	Let $\Re\phi(z)>0$ and $\phi$ be convex in $\mathbb{D}$ with $\phi(0)=1$.
	Suppose $\psi$ be the function such that $m_r:=\underset{|z|=r}{\min}|\psi(z)|$
	and also satisfies the differential equation
	\begin{equation}\label{briot-sol}
	\psi(z)+\frac{z\psi'(z)}{\psi(z)}=\phi(z).
	\end{equation}
	Let $g\in \mathcal{A}$ and $f\in \mathcal{C}(\phi)$. If $g$ is majorized by $f$ in $\mathbb{D}$, then
	\begin{equation}\label{m}
	|g'(z)| \leq |f'(z)| \quad \text{in}\;|z|\leq r_{\psi},
	\end{equation}
	where $r_{\psi}$ is the least positive root of the equation
	\begin{equation}\label{r0}
	(1-r^2)m_r-2r=0.
	\end{equation}
	The result is sharp for $m_r=\psi(-r)$.
\end{theorem}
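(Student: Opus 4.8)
The plan is to run the classical MacGregor majorization scheme, the one new ingredient being that the sharp lower bound for $|zf'(z)/f(z)|$ on circles $|z|=r$ is produced by a Briot--Bouquet differential subordination. Since $g$ is majorized by $f$, write $g=\Phi f$ with $\Phi$ analytic in $\mathbb{D}$ and $|\Phi(z)|\le 1$. Differentiating, $g'=\Phi'f+\Phi f'$, so $|g'(z)|\le |\Phi(z)|\,|f'(z)|+|\Phi'(z)|\,|f(z)|$, and the Schwarz--Pick estimate $|\Phi'(z)|\le (1-|\Phi(z)|^2)/(1-|z|^2)$ reduces the problem to bounding $|f(z)|/|f'(z)|$, equivalently to bounding $|zf'(z)/f(z)|$ from below.

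For that bound, put $p(z):=zf'(z)/f(z)$, which is analytic with $p(0)=1=\phi(0)$, and note that logarithmic differentiation gives $p(z)+zp'(z)/p(z)=1+zf''(z)/f'(z)$; since $f\in\mathcal{C}(\phi)$ this last quantity is subordinate to $\phi$. As $\phi$ is convex with $\Re\phi>0$, the Miller--Mocanu theory of Briot--Bouquet subordinations applies to \eqref{briot-sol}: its solution $\psi$ normalized by $\psi(0)=1$ is univalent, belongs to $\mathcal{P}$ (in particular is zero-free), and is the best dominant, so $p\prec\psi$. Writing $p=\psi\circ\omega$ with $\omega$ a Schwarz function and using $|\omega(z)|\le|z|$, we get $p(z)\in\psi(\overline{\mathbb{D}_r})$ whenever $|z|\le r$; since $\psi$ is zero-free, the minimum modulus principle gives $\min_{|w|\le r}|\psi(w)|=m_r$, hence $|zf'(z)/f(z)|\ge m_r$ and $|f(z)/f'(z)|\le r/m_r$ on $|z|=r$.

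Putting these together, $|g'(z)|\le\bigl(x+(1-x^2)r/((1-r^2)m_r)\bigr)|f'(z)|$ where $x:=|\Phi(z)|\in[0,1]$ and $r:=|z|$. The bracket is a concave function of $x$ that equals $1$ at $x=1$; its derivative at $x=1$ is $1-2r/((1-r^2)m_r)$, which is nonnegative exactly when $(1-r^2)m_r-2r\ge 0$. Concavity then makes the bracket nondecreasing on $[0,1]$, hence at most $1$. Since the left side of \eqref{r0} tends to $1>0$ as $r\to0^+$ and $r_\psi$ is its least positive zero, the inequality $(1-r^2)m_r-2r\ge0$ holds on $[0,r_\psi]$, which proves \eqref{m}.

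For sharpness under $m_r=\psi(-r)$, take $f=k_\phi$, the Ma--Minda convex function determined by $1+zk_\phi''/k_\phi'=\phi$; by uniqueness of the Briot--Bouquet solution $zk_\phi'/k_\phi=\psi$, so $|k_\phi(-r)/k_\phi'(-r)|=r/\psi(-r)=r/m_r$, turning the bound on $|f/f'|$ into an equality at $z=-r$. Pairing $k_\phi$ with a disk automorphism $\Phi$ (for which the Schwarz--Pick estimate is an equality) and choosing its real parameter so that $\Phi'(-r)k_\phi(-r)$ and $\Phi(-r)k_\phi'(-r)$ have the same sign makes $|g'(-r)|=\bigl(x+(1-x^2)r/((1-r^2)m_r)\bigr)|f'(-r)|$ with $x=|\Phi(-r)|$; as $r\uparrow r_\psi$ one can let $x\uparrow 1$ so that this ratio tends to $1$, while for $r$ just beyond $r_\psi$ the bracket can be made larger than $1$, showing $r_\psi$ is best possible. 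The step I expect to be the real obstacle is the second one: checking carefully that \eqref{briot-sol} falls under the Miller--Mocanu hypotheses — existence of a univalent solution $\psi\in\mathcal{P}$ with $\psi(0)=1$ that is the best dominant, together with $p(0)=\phi(0)$ — and then that zero-freeness of $\psi$ really upgrades $p\prec\psi$ to the closed-disk estimate $|zf'(z)/f(z)|\ge m_r$.
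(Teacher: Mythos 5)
Your proposal is correct and follows essentially the same route as the paper: the MacGregor decomposition $g=\Phi f$ with the Schwarz--Pick bound, the Miller--Mocanu Briot--Bouquet subordination giving $zf'/f\prec\psi$ with $\psi$ the best dominant, the minimum-modulus estimate $|f/f'|\le r/m_r$, and the elementary optimization of the resulting bracket over $|\Phi(z)|\in[0,1]$ (your concavity argument is just a repackaging of the paper's linear-in-$\beta$ minimization). The sharpness construction via a disk automorphism is also the paper's, up to the harmless rotation $z\mapsto -z$.
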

\begin{proof}
	Let us define $p(z):=zf'(z)/f(z)$. Since  $f\in \mathcal{C}(\phi)$, therefore we have $1+zf''(z)/f'(z)\prec \phi(z)$, which can be equivalently written as
	\begin{equation}\label{briot}
	p(z)+\frac{zp'(z)}{p(z)}=1+\frac{zf''(z)}{f'(z)} \prec \phi(z).
	\end{equation}
	Since $\Re\phi(z)>0$ and $\phi$ is convex in $\mathbb{D}$, therefore using \cite[Theorem~3.2d, p.~86]{subbook} the solution $\psi$ of the differential equation \eqref{briot-sol}
	is analytic in $\mathbb{D}$ with $\Re\psi(z)>0$ and has the following integral form given by
	\begin{equation*}
	\psi(z):= h(z)\left(\int_{0}^{z}\frac{h(t)}{t}dt\right)^{-1},
	\end{equation*}
	where
	\begin{equation*}
	h(z)= z\exp\int_{0}^{z}\frac{\phi(t)-1}{t}dt.
	\end{equation*}
	Since $\Re\psi(z)>0$ and $p$ satisfies the subordination~\eqref{briot}, therefore using \cite[Lemma~3.2e, p.~89]{subbook} we conclude that $\psi$ is univalent and $p\prec \psi$, where $\psi$ is the best dominant.
	Thus we have obtained that $f\in\mathcal{C}(\phi)$ implies $zf'(z)/f(z) \prec \psi(z)$ and $\psi$ is the best dominant (which is important for the sharpness of result), which is a univalent Carathe\'{o}dory function.
	Now as  $g\in \mathcal{A}$ and  $f\in \mathcal{C}(\phi)$, therefore we obtain the following well defined equality
	$$\frac{f(z)}{f'(z)}=\frac{z}{\psi(\omega(z))}, \quad (z\in \mathbb{D})$$
	where $\omega$ is a Schwarz function. Hence, using
	$\min_{|z|=r}|\psi(\omega(z))|\geq \min_{|z|=r}|\psi(z)|$
	and the hypothesis $\underset{|z|=r}{\min}|\psi(z)|=m_r$, we obtain that
	\begin{equation}\label{m1}
	\left|\frac{f(z)}{f'(z)}\right|\leq \frac{r}{m_r}, \quad (0<r<1).
	\end{equation}
	Now if $g$ is majorized by $f$, then by definition, we have $g(z)=\Psi(z) f(z)$, where $\Psi$ is analytic and satisfies $|\Psi(z)|\leq1$ in $\mathbb{D}$ such that $g'(z)=\Psi(z)f'(z)+{\Psi}'(z)f(z)$. Thus using \eqref{m1} together with the following Schwarz-Pick inequality
	$$|\Psi'(z)| \leq \frac{1-|\Psi(z)|^2}{1-|z|^2},$$
	we obtain
	\begin{equation}\label{unique}
	|g'(z)| \leq |f'(z)|\left(|\Psi(z)|+ \frac{1-|\Psi(z)|^2}{1-r^2}\frac{r}{m_r} \right) = |f'(z)|h(\beta,r),
	\end{equation}
	where $|\Psi(z)|:=\beta$ and
	$$h(\beta,r)=\beta+\frac{1-\beta^2}{1-r^2}\frac{r}{m_r}.$$
	Thus to arrive at \eqref{m}, it suffices to show that $h(\beta,r)\leq1$, which is equivalent to show that
	\begin{equation}\label{m2}
	k(\beta,r):= (1-r^2)m_r -(\beta+1)r\geq 0.
	\end{equation}
	Since $\frac{\partial}{\partial\beta}k(\beta,r)=-r<0$, Therefore, \eqref{m2} holds whenever
	$$ k(r):=\min_{\beta}k(\beta,r)= k(1,r)\geq0.$$
	Note that $k(r)$ is a continuous function of $r$ and further $k(0)= m_{0}=\psi(0)=1>0$ and $k(1)<0$. Thus there exists a point $r_{\psi}\in (0,1)$ such that $k(r)\geq0$ for all $r\in [0,r_{\psi}]$, where $r_{\psi}$ is the least positive root of \eqref{r0}.
	
	{\bf{For sharpness:}} Now let $m_r=\psi(-r)$. Choose $f(z)\in \mathcal{C}(\phi)$ such that $zf'(z)/f(z)=\psi(-z)$ and $\Psi(z)=(z+\alpha)/(1+\alpha z)$, where $-1\leq \alpha \leq 1$. We show that for each $r_{\psi} < r \leq 1$, we can choose $\alpha$ so that $g'(r)>f'(r)>0$, which implies that $g'$ is not majorized by $f'$ outside $|z|\leq r_{\psi}$. First note that
	\begin{equation}\label{m3}
	\frac{f(r)}{f'(r)}=\frac{r}{\psi(-r)}.
	\end{equation}
	Since
	\begin{equation*}
	g'(r)= f'(r) \left( \frac{r+\alpha}{1+\alpha r}+ \frac{1-{\alpha}^2}{(1+\alpha r)^2} \frac{f(r)}{f'(r)} \right) =: f'(r) h(r,\alpha)
	\end{equation*}
	and $h(r,1)=1$, it suffices to show that  ${\partial h(r,\alpha)}/{\partial \alpha}<0$ at $\alpha=1$ in order to establish that $h(r,1-\epsilon)>1$, and hence $g'(r)>f'(r)>0$. But at $\alpha=1$ and for $r>r_\psi$, we have:
	\begin{align*}
	\frac{\partial h(r, \alpha)}{\partial \alpha } &= \frac{2}{(1+r)^2} \left( \frac{1-r^2}{2} -\frac{f(r)}{f'(r)} \right)\\
	                                               &= \frac{2}{(1+r)^2} \left( \frac{1-r^2}{2} -\frac{r}{\psi(-r)} \right)\\
	                                               &<0,
	\end{align*}
	using the equations~\eqref{r0}, \eqref{m3} and the fact that $k(r)<0$ for all $r\in (r_{\psi},1].$ That completes the proof.
\end{proof}

\begin{remark} The following result was proved by MacGregor~\cite{mc}:
	Let $g\in \mathcal{A}$ and $f\in \mathcal{C}$. If $g$ is majorized by $f$ in $\mathbb{D}$, then $|g'(z)| \leq |f'(z)|$ in $|z|\leq {1}/{3}.$
	The result is sharp.
\end{remark}

In our next result, we show the application to the Janowski class, which covers many well-known classes:
\begin{corollary}\label{AB}
	Let $f$ belongs to $\mathcal{C}[D,E]$, where $-1\leq E<D\leq1$ along with $1+D/E\geq0$ and $-1\leq E<0$. If $g$ is majorized by $f$, then
	\begin{equation*}
	|g'(z)| \leq |f'(z)| \quad\text{in}\quad |z|\leq r_0,
	\end{equation*}
	where $r_0$ is the smallest positive root of the equation
	$$(1-r^2)\left( {}_{2}F_{1}\left(1-\frac{D}{E}, 1, 2; \frac{-Er}{1-Er} \right) \right)^{-1}-2r=0.$$
	The result is sharp.
\end{corollary}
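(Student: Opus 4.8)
The plan is to obtain Corollary~\ref{AB} directly from Theorem~\ref{mj} applied with $\phi(z)=(1+Dz)/(1+Ez)$, $-1\le E<D\le 1$. First I would check that this $\phi$ satisfies the hypotheses of Theorem~\ref{mj}: being a M\"obius transformation it is univalent, with $\phi(0)=1$; its image $\phi(\mathbb{D})$ is a disc when $-1<E<0$ and a half-plane when $E=-1$, hence convex; and $\Re\phi(z)>0$ on $\mathbb{D}$, since for $E\neq-1$ the image disc $\{\,|w-c|<\rho\,\}$ has $c-\rho=(1-D)(1+E)/(1-E^2)=(1-D)/(1-E)\ge 0$, placing the open disc in the right half-plane (and for $E=-1$ one checks $\phi(\mathbb{D})=\{\Re w>(1-D)/2\}$). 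So Theorem~\ref{mj} applies once we identify the associated best dominant $\psi$ and the quantity $m_r$.

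Second, I would solve the Briot--Bouquet equation~\eqref{briot-sol} via the integral representation produced inside the proof of Theorem~\ref{mj}. Since $(\phi(t)-1)/t=(D-E)/(1+Et)$ and $E\neq 0$, one gets $h(z)=z\,(1+Ez)^{(D-E)/E}$ and then $\int_0^z h(t)\,t^{-1}\,dt=(1/D)\bigl((1+Ez)^{D/E}-1\bigr)$ (the degenerate case $D=0$ being recovered by continuity), whence
\[
\psi(z)=\frac{Dz\,(1+Ez)^{(D-E)/E}}{(1+Ez)^{D/E}-1}
       =\frac{Dz}{(1+Ez)\bigl(1-(1+Ez)^{-D/E}\bigr)}.
\]

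Third --- and this is the step I expect to be the main obstacle --- I would show that under the standing restrictions $-1\le E<0$ and $1+D/E\ge 0$ the quantity $m_r=\min_{|z|=r}|\psi(z)|$ is attained at $z=-r$, that is, $m_r=\psi(-r)$. Equivalently, writing $w=1+Ez$ and $\beta=1-D/E$, one must show that $|w-w^{\beta}|$ is maximised over the circle $|w-1|=|E|r$ at the real point $w=1+|E|r$; the restrictions on $D$ and $E$ are precisely what force $\beta\in(0,2]$ and hence this extremal behaviour, and they can be confirmed by a direct analysis of $|\psi(re^{i\theta})|$ on each circle. Granting $m_r=\psi(-r)$, I would evaluate $\psi(-r)=Dr\,(1-Er)^{D/E-1}\bigl(1-(1-Er)^{D/E}\bigr)^{-1}$ and recognise it through the Gauss identity ${}_{2}F_{1}(a,1;2;x)=\bigl(1-(1-x)^{1-a}\bigr)\bigl((1-a)x\bigr)^{-1}$ taken at $a=1-D/E$ and $x=-Er/(1-Er)$ (so that $1-x=1/(1-Er)$), obtaining
\[
m_r=\psi(-r)=\left({}_{2}F_{1}\!\left(1-\frac{D}{E},\,1,\,2;\,\frac{-Er}{1-Er}\right)\right)^{-1}.
\]

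Finally, substituting this expression for $m_r$ into equation~\eqref{r0}, namely $(1-r^2)m_r-2r=0$, reproduces verbatim the equation whose smallest positive root is $r_0$ (and $r_0\in(0,1)$ by the same sign analysis used in Theorem~\ref{mj}, since the left side equals $1$ at $r=0$). Theorem~\ref{mj} then yields $|g'(z)|\le|f'(z)|$ in $|z|\le r_0$, and, since $m_r=\psi(-r)$, its sharpness clause gives the sharpness of $r_0$. Apart from the modulus computation $m_r=\psi(-r)$, the argument is just bookkeeping with the integral formula for $\psi$ and the hypergeometric identity.
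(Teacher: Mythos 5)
Your overall route is the paper's: apply Theorem~\ref{mj} with $\phi(z)=(1+Dz)/(1+Ez)$, solve the Briot--Bouquet equation \eqref{briot-sol} by the integral formula, and identify $\psi(-r)$ as the reciprocal of the Gauss hypergeometric value. Your closed form $\psi(z)=Dz(1+Ez)^{(D-E)/E}\bigl((1+Ez)^{D/E}-1\bigr)^{-1}$ agrees with the paper's $\psi=1/q$, and the identity ${}_{2}F_{1}(a,1;2;x)=\bigl(1-(1-x)^{1-a}\bigr)\bigl((1-a)x\bigr)^{-1}$ with $a=1-D/E$, $x=-Er/(1-Er)$ does reproduce the stated equation for $r_0$. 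The gap is exactly the step you flag as the main obstacle and then skip: you never prove $m_r=\min_{|z|=r}|\psi(z)|=\psi(-r)$. You only reformulate it as maximising $|w-w^{\beta}|$ over the circle $|w-1|=|E|r$ with $\beta=1-D/E\in(0,2]$ and assert that it ``can be confirmed by a direct analysis.'' The reformulation is correct, but the resulting extremal problem is genuinely nontrivial (for $\beta$ near $1$ it degenerates to locating the maximum of $|w\log w|$ on that circle, where the two real boundary points already compete), and nothing in the proposal settles it. Since both the radius equation and the sharpness clause of Theorem~\ref{mj} hinge on $m_r=\psi(-r)$, the corollary is not actually established as written.

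The paper sidesteps the modulus computation. It notes that under $1+D/E\geq 0$ and $-1\leq E<0$ one has $\min_{|z|=r}\Re\psi(z)=\psi(-r)$ (the standard order-of-starlikeness estimate for the hypergeometric solution $1/q$), and then uses the elementary chain $|\psi(z)|\geq\Re\psi(z)\geq\psi(-r)=|\psi(-r)|$ for $|z|=r$: since the lower bound is attained at $z=-r$, it follows in one line that $\min_{|z|=r}|\psi(z)|=\psi(-r)$. To complete your argument you should either import that real-part estimate (this is also where the hypotheses $1+D/E\geq0$ and $E<0$ actually enter) or genuinely carry out the maximisation of $|w-w^{\beta}|$; everything else in your write-up is correct bookkeeping.
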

\begin{proof}
	In Theorem~\ref{mj}, put $\phi(z)= (1+D z)/(1+E z)$. Then we have $\psi(z):= 1/q(z)$, where
	\begin{equation*}
	q(z)=
	\left\{
	\begin{array}{lr}
	\int_{0}^{1}\left(\frac{1+Etz}{1+Ez}\right)^{\frac{D-E}{E}}dt, & \text{if}\; E\neq0;\\\\

	\int_{0}^{1}e^{D(t-1)z}dt, & \text{if}\; E=0,
	\end{array}
	\right.
	\end{equation*}
	which further can be represented in terms of confluent and Gaussian hypergeometric functions, respectively as follows:
	\begin{equation*}
	q(z)=
	\left\{
	\begin{array}{lr}
	{}_{2}F_{1}\left(1-\frac{D}{E}, 1, 2; \frac{Ez}{1+Ez} \right), & \text{if}\; E\neq0;\\
	
	{}_{1}F_{1}\left(1,2;-Dz\right), & \text{if}\; E=0.
	\end{array}
	\right.
	\end{equation*}
	Since $1+D/E\geq0$ and $-1\leq E<0$, therefore we have
	\begin{equation*}
	\min_{|z|=r}\Re\psi(z) = {\psi(-r)}=\frac{1}{q(-r)}= \left( {}_{2}F_{1}\left(1-\frac{D}{E}, 1, 2; \frac{-Er}{1-Er} \right) \right)^{-1}.
	\end{equation*}
	Since $\Re\psi(z)>0$ and $\min_{|z|=r}\Re\psi(z) = {\psi(-r)}$, therefore we conclude that
	$\min_{|z|=r}|\psi(z)|=\psi(-r)$
	and hence, the result follows from Theorem~\ref{mj}.
\end{proof}

Now we have the result for the class of convex functions of order $\alpha$ using Corollary~\ref{AB}:
\begin{corollary}
	Let $f$ belongs to $\mathcal{C}[1-2\alpha,-1]$, where $0\leq \alpha<1$. If $g$ is majorized by $f$, then $|g'(z)| \leq |f'(z)|$ in $|z|\leq r_0,$
	where $r_0$ is the smallest positive root of the equation
	$$(1-r^2)\left( {}_{2}F_{1}\left(2(1-\alpha), 1, 2; \frac{r}{1+r} \right) \right)^{-1}-2r=0.$$
	The result is sharp.
\end{corollary}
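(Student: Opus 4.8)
The plan is to obtain this as an immediate specialization of Corollary~\ref{AB}, taking the Janowski parameters to be $D=1-2\alpha$ and $E=-1$. The first step is to verify that this choice meets the three hypotheses required in Corollary~\ref{AB}: $-1\le E<D\le1$, $1+D/E\ge0$, and $-1\le E<0$. For $0\le\alpha<1$ we have $-1<1-2\alpha\le1$, so $-1=E<D\le1$ holds; next $1+D/E=1-(1-2\alpha)=2\alpha\ge0$; and trivially $E=-1\in[-1,0)$. Hence every $f\in\mathcal{C}[1-2\alpha,-1]$ lies in the scope of Corollary~\ref{AB}, and so, if $g$ is majorized by $f$, then $|g'(z)|\le|f'(z)|$ in $|z|\le r_0$ with $r_0$ the smallest positive root of the equation displayed there.

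The remaining step is a routine substitution into that equation. With $E=-1$ and $D=1-2\alpha$ one computes $1-\dfrac{D}{E}=1+(1-2\alpha)=2(1-\alpha)$ and $\dfrac{-Er}{1-Er}=\dfrac{r}{1+r}$, so
\begin{equation*}
(1-r^2)\left({}_{2}F_{1}\left(1-\frac{D}{E},\,1,\,2;\,\frac{-Er}{1-Er}\right)\right)^{-1}-2r=0
\end{equation*}
becomes exactly
\begin{equation*}
(1-r^2)\left({}_{2}F_{1}\left(2(1-\alpha),\,1,\,2;\,\frac{r}{1+r}\right)\right)^{-1}-2r=0,
\end{equation*}
which is the asserted defining equation for $r_0$. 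The sharpness is inherited verbatim from Corollary~\ref{AB}, which itself rests on the extremal pair $f,\Psi$ built in the sharpness part of Theorem~\ref{mj} (here $zf'(z)/f(z)=\psi(-z)$ with $\psi=1/q$ and $\Psi(z)=(z+\alpha_0)/(1+\alpha_0 z)$).

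Since the argument is a pure specialization, I do not expect a genuine obstacle. The only points deserving a moment's care are: confirming the strict inequality $E<D$ persists at the endpoint $\alpha=0$ (where $D=1$, $E=-1$, the classical convex case), which it does; and noting that the hypergeometric representation $q(z)={}_{2}F_{1}(1-D/E,1,2;Ez/(1+Ez))$ used in the proof of Corollary~\ref{AB} is valid throughout $0\le\alpha<1$, since $1-D/E=2(1-\alpha)>0$ there. If one preferred a self-contained derivation, one could instead redo the integral computation $q(z)=\int_0^1\big((1-tz)/(1-z)\big)^{2\alpha-2}\,dt$ directly for this $\psi=1/q$ and identify $\min_{|z|=r}|\psi(z)|=\psi(-r)$, but invoking Corollary~\ref{AB} is cleaner.
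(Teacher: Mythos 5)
Your proposal is correct and matches the paper's approach exactly: the paper presents this corollary as an immediate specialization of Corollary~\ref{AB} with $D=1-2\alpha$ and $E=-1$, which is precisely what you do, and your verification of the hypotheses $1+D/E=2\alpha\ge 0$, $E=-1\in[-1,0)$, and the substitutions $1-D/E=2(1-\alpha)$, $-Er/(1-Er)=r/(1+r)$ is accurate.
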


\begin{corollary}
	Let $f$ belongs to $\mathcal{C}[D,0]$. If $g$ is majorized by $f$, then $|g'(z)| \leq |f'(z)|$ in $|z|\leq r_0,$
	where $r_0$ is the smallest positive root of the equation
	$$(1-r^2)(Dr e^{-Dr}/(e^{-Dr}-1))+2r=0.$$
	The result is sharp.
\end{corollary}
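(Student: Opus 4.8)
The plan is to obtain this corollary from Theorem~\ref{mj} in the same manner as Corollary~\ref{AB}, now with $E=0$. First I would note that $\mathcal{C}[D,0]$ is the Janowski class $\mathcal{C}(\phi)$ with $\phi(z)=1+Dz$ and $0<D\le1$; this $\phi$ is (trivially) convex, has $\phi(0)=1$, and satisfies $\Re\phi(z)=1+D\,\Re z>1-D\ge0$ on $\mathbb{D}$, so the hypotheses of Theorem~\ref{mj} are in force. Solving the differential equation~\eqref{briot-sol} exactly as in the proof of Corollary~\ref{AB} (the branch $E=0$ of that computation) produces the best dominant $\psi(z)=1/q(z)$ with $q(z)=\int_{0}^{1}e^{D(t-1)z}\,dt={}_{1}F_{1}(1,2;-Dz)$.

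The only genuinely new step is to check that $m_r:=\min_{|z|=r}|\psi(z)|$ equals $\psi(-r)$, since this is what feeds both the equation~\eqref{r0} and the sharpness assertion. As $|\psi|=1/|q|$, this is equivalent to $\max_{|z|=r}|q(z)|=q(-r)$. Because $D$ is real, $|e^{D(t-1)z}|=e^{D(t-1)\Re z}$, and since $t-1\le0$ and $D>0$ each such integrand attains its maximum over $|z|=r$ at $z=-r$; integrating this bound in $t$ yields $|q(z)|\le\int_{0}^{1}e^{D(1-t)r}\,dt=q(-r)$, with equality at $z=-r$. Hence $m_r=\psi(-r)=1/q(-r)$, and the substitution $s=1-t$ evaluates $q(-r)=\int_{0}^{1}e^{Dsr}\,ds=(e^{Dr}-1)/(Dr)$, so that $m_r=Dr/(e^{Dr}-1)$.

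It then remains to substitute $m_r=Dr/(e^{Dr}-1)$ into~\eqref{r0}. Using the identity $Dr/(e^{Dr}-1)=-Dr\,e^{-Dr}/(e^{-Dr}-1)$, the equation $(1-r^2)m_r-2r=0$ becomes $(1-r^2)\,Dr\,e^{-Dr}/(e^{-Dr}-1)+2r=0$, which is precisely the asserted equation; $r_0$ is its least positive root, which exists because the left-hand side of~\eqref{r0} equals $m_0=\psi(0)=1>0$ at $r=0$ and is negative at $r=1$. Finally, sharpness is automatic, since Theorem~\ref{mj} already states that the radius is sharp whenever $m_r=\psi(-r)$, which we have just verified. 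I expect the only mild obstacle to be the modulus estimate $\max_{|z|=r}|q(z)|=q(-r)$; everything else is bookkeeping inherited from Theorem~\ref{mj} and Corollary~\ref{AB}.
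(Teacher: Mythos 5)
Your proposal is correct, and it reaches the same destination as the paper but by a different route at the one step that actually matters. The paper also specializes Theorem~\ref{mj} with $\phi(z)=1+Dz$ and reads off $\psi(z)=Dze^{Dz}/(e^{Dz}-1)$ from the $E=0$ branch of Corollary~\ref{AB}; the difference is in how $\min_{|z|=r}|\psi(z)|=\psi(-r)$ is justified. The paper asserts (``with a little computation'') that $l(z)=ze^z/(e^z-1)$ is convex univalent, hence $\psi(z)=l(Dz)$ is convex and symmetric about the real axis, and concludes the minimum modulus is attained at $z=-r$. You instead work with $q=1/\psi$ via its integral representation and bound the integrand pointwise: $|e^{D(t-1)z}|=e^{D(t-1)\Re z}\le e^{D(1-t)r}$ for $t\in[0,1]$, $D>0$, giving $\max_{|z|=r}|q(z)|=q(-r)$ and hence $m_r=\psi(-r)=Dr/(e^{Dr}-1)$ explicitly. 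Your argument is more elementary and fully self-contained — it avoids both the unverified convexity of $l$ and the implicit step that convexity plus real symmetry forces the minimum modulus onto the real axis at $-r$ (which needs $\psi'(0)>0$ to pick $-r$ over $r$) — at the cost of being tied to this particular integral representation; the paper's convexity route is shorter to state and yields the extra geometric information that $\psi(\mathbb{D})$ is convex. Your closing algebra, rewriting $Dr/(e^{Dr}-1)=-Dre^{-Dr}/(e^{-Dr}-1)$ to match the sign convention in the stated equation, and the appeal to the sharpness clause of Theorem~\ref{mj}, are both exactly what is needed.
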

\begin{proof}
	From the proof of Corollary~\ref{AB}, we obtain that $\psi(z)=Dz e^{Dz}/(e^{Dz}-1)$, when $\phi(z)=1+Dz$. Now with a little computation, we find that the function $l(z)=ze^z/(e^z-1)$ is convex univalent in $\mathbb{D}$. Therefore, the function $\psi(z)=l(Dz)$ is also convex in $\mathbb{D}$ for each fixed $0<D\leq1$. Since $\psi$ is also symmetric about the real axis, we conclude that $\min_{|z|=r}|\psi(z)|=\psi(-r)$. Hence the result.
\end{proof}

\begin{theorem}\label{hallen}
	Let $\phi$ be convex in $\mathbb{D}$, with $\Re \phi(z)>0$, $\phi(0)=1$ and suppose $f\in \mathcal{A}$ satisfies the differential subordination
	\begin{equation}\label{halb}
	\frac{zf'(z)}{f(z)}+z\left(\frac{zf'(z)}{f(z)}\right)' \prec \phi(z).
	\end{equation}
	If $g$ is majorized by $f$, then $|g'(z)|\leq |f'(z)|$ in $|z|\leq r_0$,
	where $r_0$ is the least positive root of the equation
	\begin{equation*}
	(1-r^2)\min_{|z|=r}\Re\psi(z)-2r=0,
	\end{equation*}
	where $$\psi(z):=\frac{1}{z}\int_{0}^{z}{\phi(t)}dt.$$
	The result is sharp for the case $\min_{|z|=r}\Re\psi(z)=\psi(\pm r)$.
\end{theorem}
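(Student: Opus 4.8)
The plan is to follow the template of the proof of Theorem~\ref{mj}, the one genuinely new ingredient being the identification of the best dominant associated with the differential subordination~\eqref{halb}. First I would set $p(z):=zf'(z)/f(z)$; a direct differentiation recasts \eqref{halb} as the first order linear differential subordination
$$p(z)+zp'(z)\prec\phi(z),\qquad p(0)=1.$$
Since $\phi$ is convex with $\phi(0)=1$, a standard result on differential subordinations \cite{subbook} gives $p\prec\psi$, where
$$\psi(z)=\frac1z\int_0^z\phi(t)\,dt=\int_0^1\phi(tz)\,dt,$$
with $\psi$ univalent (in fact convex) and, crucially for sharpness, the \emph{best} dominant. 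The second representation shows at once that $\Re\psi(z)=\int_0^1\Re\phi(tz)\,dt>0$ on $\mathbb{D}$, so in particular $\psi\in\mathcal{P}$ and $\Re(zf'(z)/f(z))>0$, whence $f$ is starlike (so $f(z)\neq0$ and $f'(z)\neq0$ for $z\neq0$).

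Next, from $zf'(z)/f(z)\prec\psi(z)$ I would write $f(z)/f'(z)=z/\psi(\omega(z))$ for a suitable Schwarz function $\omega$. For $|z|=r$ we have $|\omega(z)|\le r$, and since $\Re\psi$ is a positive harmonic function its minimum over $\overline{\mathbb{D}}_r$ is attained on $|z|=r$; hence $|\psi(\omega(z))|\ge\Re\psi(\omega(z))\ge\min_{|z|=r}\Re\psi(z)$, giving
$$\left|\frac{f(z)}{f'(z)}\right|\le\frac{r}{\min_{|z|=r}\Re\psi(z)}\qquad(|z|=r).$$
From here the argument is verbatim that of Theorem~\ref{mj}: writing $g=\Psi f$ with $|\Psi|\le1$, so that $g'=\Psi f'+\Psi'f$, and invoking the Schwarz--Pick estimate $|\Psi'(z)|\le(1-|\Psi(z)|^2)/(1-|z|^2)$, one obtains $|g'(z)|\le|f'(z)|\,h(\beta,r)$ with $\beta:=|\Psi(z)|$ and $h(\beta,r)=\beta+\frac{1-\beta^2}{1-r^2}\cdot\frac{r}{\min_{|z|=r}\Re\psi(z)}$. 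The inequality $h(\beta,r)\le1$ is equivalent to $(1-r^2)\min_{|z|=r}\Re\psi(z)-(\beta+1)r\ge0$, whose worst case is $\beta=1$. Setting $k(r):=(1-r^2)\min_{|z|=r}\Re\psi(z)-2r$, continuity together with $k(0)=\psi(0)=1>0$ and $k(r)<0$ for $r$ near $1$ (the mean value property gives $\min_{|z|=r}\Re\psi(z)\le\Re\psi(0)=1$, so $(1-r^2)\min_{|z|=r}\Re\psi(z)\le1-r^2\to0$ while $2r\to2$) produces a least positive root $r_0\in(0,1)$ with $h(\beta,r)\le1$ throughout $|z|\le r_0$. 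This proves the claimed inequality.

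For sharpness, assume $\min_{|z|=r}\Re\psi(z)=\psi(-r)$ (the case $\psi(r)$ is entirely symmetric). Then I would choose $f\in\mathcal{A}$ with $zf'(z)/f(z)=\psi(-z)$, which is legitimate precisely because $\psi$ is the best dominant, together with $\Psi(z)=(z+\alpha)/(1+\alpha z)$, $-1\le\alpha\le1$. Since $f(r)/f'(r)=r/\psi(-r)$, one gets $g'(r)=f'(r)\,h(r,\alpha)$ with $h(r,1)=1$; showing $\partial h(r,\alpha)/\partial\alpha<0$ at $\alpha=1$ for $r>r_0$ reduces, exactly as in Theorem~\ref{mj}, to the inequality $\tfrac{1-r^2}{2}-\tfrac{r}{\psi(-r)}<0$, which holds because $k(r)<0$ on $(r_0,1]$. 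Hence for $\alpha$ slightly below $1$ we have $g'(r)>f'(r)>0$, so $r_0$ cannot be enlarged.

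The only real obstacle is the first step: correctly invoking the differential-subordination theory so that $\psi(z)=\frac1z\int_0^z\phi(t)\,dt$ is seen to be univalent, to satisfy $\Re\psi>0$, and to be the best dominant of~\eqref{halb} (so that $zf'/f\prec\psi$ is a bona fide subordination carrying the extremality needed for sharpness). Once that is secured, everything else is a transcription of the proof of Theorem~\ref{mj}, the only modifications being the replacement of $m_r$ by $\min_{|z|=r}\Re\psi(z)$ and the elementary observation $|\psi|\ge\Re\psi$ used to pass from the modulus to the real part.
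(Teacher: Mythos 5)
Your proposal is correct and follows essentially the same route as the paper: reduce \eqref{halb} to $p+zp'\prec\phi$, identify $\psi(z)=\frac1z\int_0^z\phi(t)\,dt$ as the convex best dominant via the Hallenbeck--Ruscheweyh theorem, deduce $\Re\psi>0$, bound $|f/f'|$ by $r/\min_{|z|=r}\Re\psi(z)$, and then transcribe the argument of Theorem~\ref{mj}. The only cosmetic difference is that you verify $\Re\psi>0$ directly from $\psi(z)=\int_0^1\phi(tz)\,dt$ instead of citing the integral-operator result, and you are slightly more explicit about the step $|\psi|\geq\Re\psi$; both are sound.
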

\begin{proof}
	Let $p(z)=zf'(z)/f(z)$. Then the subordination \eqref{halb} can equivalently be written as:
	\begin{equation*}
	p(z)+zp'(z) \prec \phi(z).
	\end{equation*}
	A simple calculation show that the analytic function $\psi(z):=({1}/{z})\int_{0}^{z}{\phi(t)}dt$ satisfies
	\begin{equation*}
	\psi(z)+z\psi'(z)=\phi(z).
	\end{equation*}
	Now from the Hallenbeck and Ruscheweyh result \cite[Theorem~ 3.1b, p.~71]{subbook}, we have $p\prec \psi$, where $\psi$ is the best dominant and also convex. Further, since $\Re\phi(z)>0$, using the integral operator \cite[Theorem~ 4.2a, p.~202]{subbook} preserving functions with positive real part, we see that $\psi$ is a Carathe\'{o}dory function. Thus we have
	$$\frac{f(z)}{zf'(z)} \prec \frac{1}{\psi(z)} \; \quad \text{which implies }\quad \left|\frac{f(z)}{f'(z)}\right|\leq \frac{r}{\min_{|z|=r}|\psi(z)|}= \frac{r}{\min_{|z|=r}\Re\psi(z)}.$$
	Now proceeding same as in the Theorem~\ref{mj} result follows.
\end{proof}		

\begin{corollary}
	Suppose $f\in \mathcal{A}$ satisfies the differential subordination
	\begin{equation*}
	\frac{zf'(z)}{f(z)}+z\left(\frac{zf'(z)}{f(z)}\right)' \prec \frac{1+z}{1-z}.
	\end{equation*}
	If $g$ is majorized by $f$, then $|g'(z)|\leq |f'(z)|$ in $|z|\leq r_0$, where $r_0$ is the least positive root of the equation
	$$(1-r^2)(2\log(1+r)-r)-2r^2=0.$$
	The result is sharp.
\end{corollary}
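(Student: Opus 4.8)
The plan is to apply Theorem~\ref{hallen} verbatim with $\phi(z)=(1+z)/(1-z)$. This $\phi$ is the standard conformal map of $\mathbb{D}$ onto the right half-plane, so it is convex and univalent, $\phi(0)=1$, and $\Re\phi(z)>0$ on $\mathbb{D}$; thus all hypotheses of Theorem~\ref{hallen} are in force and the only work left is to identify the associated $\psi$ and to evaluate $\min_{|z|=r}\Re\psi(z)$.

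First I would compute $\psi(z)=\tfrac1z\int_0^z\phi(t)\,dt$. Writing $\tfrac{1+t}{1-t}=-1+\tfrac{2}{1-t}$ and integrating gives $\int_0^z\phi(t)\,dt=-z-2\log(1-z)$, hence $\psi(z)=-1-\tfrac{2\log(1-z)}{z}$. Expanding $\log(1-z)=-\sum_{k\ge1}z^k/k$ shows $\psi(z)=1+z+\tfrac23 z^2+\cdots$, so in particular $\psi$ has real Taylor coefficients and $\psi'(0)=1>0$. Evaluating on the negative axis, $\psi(-r)=-1-\tfrac{2\log(1+r)}{-r}=\tfrac{2\log(1+r)-r}{r}$.

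Next I would show $\min_{|z|=r}\Re\psi(z)=\psi(-r)$. By Theorem~\ref{hallen} (see its proof) $\psi$ is the best dominant and is convex, so $\psi(\overline{\mathbb{D}_r})$ is a convex set for each $r\in(0,1)$; because $\psi$ has real coefficients this set is symmetric about the real axis, so its leftmost point lies on $\mathbb{R}$ and is one of $\psi(\pm r)$. Since $\psi$ restricted to $(-1,1)$ is real, univalent, and has $\psi'(0)>0$, it is increasing there, so $\psi(-r)<\psi(0)<\psi(r)$. As $\Re\psi$ is harmonic, its minimum over $\overline{\mathbb{D}_r}$ is attained on the circle $|z|=r$; combining these facts gives $\min_{|z|=r}\Re\psi(z)=\psi(-r)=\tfrac{2\log(1+r)-r}{r}$.

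Finally, substituting this value into the defining relation $(1-r^2)\min_{|z|=r}\Re\psi(z)-2r=0$ from Theorem~\ref{hallen} and clearing the denominator (multiplying by $r>0$) yields $(1-r^2)(2\log(1+r)-r)-2r^2=0$, the stated equation for $r_0$, and a short sign analysis confirms it has a least positive root in $(0,1)$. Sharpness is inherited directly from Theorem~\ref{hallen}, since here $\min_{|z|=r}\Re\psi(z)=\psi(-r)=\psi(\pm r)$ up to the ordering already noted. The only non-mechanical step is locating the minimum of $\Re\psi$ on $|z|=r$; the rest is bookkeeping with the integral and the logarithm.
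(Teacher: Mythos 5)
Your proposal is correct and matches the paper's (implicit) proof: the corollary is stated as a direct application of Theorem~\ref{hallen} with $\phi(z)=(1+z)/(1-z)$, and your computation $\psi(z)=-1-2\log(1-z)/z$, the identification $\min_{|z|=r}\Re\psi(z)=\psi(-r)=(2\log(1+r)-r)/r$ via convexity and real coefficients, and the resulting equation $(1-r^2)(2\log(1+r)-r)-2r^2=0$ are exactly the details the paper omits. Nothing further is needed.
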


\begin{corollary}
	Suppose $f\in \mathcal{A}$ satisfies the differential subordination
	\begin{equation*}
	\frac{zf'(z)}{f(z)}+z\left(\frac{zf'(z)}{f(z)}\right)' \prec e^z.
	\end{equation*}
	If $g$ is majorized by $f$, then $|g'(z)|\leq |f'(z)|$ in $|z|\leq r_0$, where $r_0$ is the least positive root of the equation
	$$(1-r^2)(1-e^{-r})-2r^2=0.$$
	The result is sharp.
\end{corollary}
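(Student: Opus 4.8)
The plan is to derive this corollary as the specialization $\phi(z)=e^{z}$ of Theorem~\ref{hallen}. First I would check that $\phi(z)=e^{z}$ satisfies the hypotheses of that theorem: $\phi(0)=1$, the exponential is convex (indeed univalent) in $\mathbb{D}$, and $\Re e^{z}=e^{\Re z}\cos(\Im z)>0$ throughout $\mathbb{D}$ because $|\Im z|<1<\pi/2$. Hence Theorem~\ref{hallen} applies, and the associated best dominant is
$$\psi(z)=\frac{1}{z}\int_{0}^{z}\phi(t)\,dt=\frac{1}{z}\int_{0}^{z}e^{t}\,dt=\frac{e^{z}-1}{z}=1+\frac{z}{2}+\frac{z^{2}}{6}+\cdots.$$

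The only non-routine point is to identify $\min_{|z|=r}\Re\psi(z)$ and to see that it equals $\psi(-r)$, so that we land in the sharp case of Theorem~\ref{hallen}. I would argue as follows: by the Hallenbeck--Ruscheweyh theorem the best dominant $\psi$ is convex in $\mathbb{D}$, whence $\psi(\{|z|\le r\})$ is a compact convex set for each $r\in(0,1)$, and since $\psi$ has real Taylor coefficients this set is symmetric about the real axis. The linear functional $w\mapsto\Re w$ attains its minimum over this set on the boundary curve $\psi(\{|z|=r\})$, and by the symmetry the extremal face contains a real point, necessarily $\psi(r)$ or $\psi(-r)$. Since $\psi(-r)=(1-e^{-r})/r<1<(e^{r}-1)/r=\psi(r)$ (using $e^{-r}>1-r$ and $e^{r}>1+r$), it follows that $\min_{|z|=r}\Re\psi(z)=\psi(-r)=(1-e^{-r})/r$. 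Alternatively, one can bypass the convex-geometry argument by verifying directly that $\psi(z)=(e^{z}-1)/z$ is convex and symmetric in $\mathbb{D}$, just as $l(z)=ze^{z}/(e^{z}-1)$ was handled in the preceding corollary, and then invoke the standard fact that the minimum of the modulus over $|z|=r$ (hence, $\psi$ being Carath\'eodory, of the real part) is attained at $z=-r$.

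Finally I would substitute this value into the defining relation of Theorem~\ref{hallen},
$$(1-r^{2})\min_{|z|=r}\Re\psi(z)-2r=0\quad\Longrightarrow\quad(1-r^{2})\,\frac{1-e^{-r}}{r}-2r=0,$$
and clear the denominator by multiplying through by $r$, obtaining $(1-r^{2})(1-e^{-r})-2r^{2}=0$, which is exactly the stated equation; its left-hand side is positive for small $r>0$ (it behaves like $r$) and equals $-2$ at $r=1$, so a least positive root $r_{0}\in(0,1)$ exists. Sharpness is then immediate, since we are precisely in the case $\min_{|z|=r}\Re\psi(z)=\psi(-r)$ covered by the sharpness clause of Theorem~\ref{hallen}. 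The main obstacle is the identification of the circular minimum of $\Re\psi$ with $\psi(-r)$; every other step is direct substitution into Theorem~\ref{hallen}.
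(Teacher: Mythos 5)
Your proposal is correct and follows the same route the paper intends: the corollary is stated there without proof as the direct specialization $\phi(z)=e^{z}$ of Theorem~\ref{hallen}, giving $\psi(z)=(e^{z}-1)/z$ and $\min_{|z|=r}\Re\psi(z)=\psi(-r)=(1-e^{-r})/r$, which after clearing the factor $r$ yields exactly the stated root equation and places you in the sharp case $\min_{|z|=r}\Re\psi(z)=\psi(\pm r)$. Your convexity-and-symmetry justification that the minimum of $\Re\psi$ on $|z|=r$ is attained at $z=-r$ is a detail the paper leaves implicit, and it is sound.
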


\begin{theorem}
	Let $\phi$ be convex in $\mathbb{D}$, with $\Re \phi(z)>0$, $\phi(0)=1$ and suppose $f\in \mathcal{A}$ satisfies the differential subordination
	\begin{equation}\label{p2p}
	\frac{zf'(z)}{f(z)}\left(\frac{zf'(z)}{f(z)}+2z\left(\frac{zf'(z)}{f(z)}\right)'\right) \prec \phi(z),\quad \alpha\in [0,1).
	\end{equation}
	If $g$ is majorized by $f$, then $|g'(z)|\leq |f'(z)|$ in $|z|\leq r_0$,
	where $r_0$ is the least positive root of the equation
	\begin{equation*}
	(1-r^2)\min_{|z|=r}|\sqrt{\psi(z)}|-2r=0,
	\end{equation*}
	where $$\psi(z):=\frac{1}{z}\int_{0}^{z}{\phi(t)}dt.$$
	The result is sharp when $\min_{|z|=r}|\sqrt{\psi(z)}|= \sqrt{\psi(\pm r)}$.
\end{theorem}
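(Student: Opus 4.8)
The plan is to reduce the hypothesis to the situation already handled in Theorem~\ref{hallen} by an algebraic substitution, and then to re‑run the Schwarz–Pick estimate of Theorem~\ref{mj}. Write $p(z):=zf'(z)/f(z)$ and set $q(z):=p(z)^2$. The identity $z\bigl(p(z)^2\bigr)'=2zp(z)p'(z)$ gives
\begin{equation*}
q(z)+zq'(z)=p(z)^2+2zp(z)p'(z)=p(z)\left(p(z)+2zp'(z)\right),
\end{equation*}
so the differential subordination \eqref{p2p} says exactly that $q(z)+zq'(z)\prec\phi(z)$, with $q(0)=1=\phi(0)$. Since $\phi$ is convex with $\phi(0)=1$, the Hallenbeck–Ruscheweyh theorem \cite[Theorem~3.1b, p.~71]{subbook} yields $q\prec\psi$, where $\psi(z)=(1/z)\int_0^z\phi(t)\,dt$ is the best dominant and is convex; and since $\Re\phi(z)>0$, the integral operator \cite[Theorem~4.2a, p.~202]{subbook} that preserves functions of positive real part shows $\psi$ is a Carath\'eodory function, precisely as in the proof of Theorem~\ref{hallen}.

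Next I would extract a lower bound for $|p|$. From $q\prec\psi$ we have $q(z)=\psi(\omega(z))$ for some Schwarz function $\omega$; since $\Re\psi>0$ on $\mathbb{D}$, $\psi$ omits $(-\infty,0]$, so the principal branch $\sqrt{\psi}$ is analytic and zero‑free on $\mathbb{D}$. Then $p/\sqrt{\psi\circ\omega}$ is analytic and zero‑free on the connected set $\mathbb{D}$, takes only the values $\pm1$, and equals $1$ at the origin, whence $p(z)=\sqrt{\psi(\omega(z))}$. Because $1/\sqrt{\psi}$ is analytic on each closed disk $|z|\le r<1$, the maximum modulus principle gives $|\sqrt{\psi(w)}|\ge\min_{|\zeta|=r}|\sqrt{\psi(\zeta)}|$ for every $|w|\le r$; taking $w=\omega(z)$ with $|z|=r$ (so $|\omega(z)|\le r$) yields $|p(z)|\ge\min_{|\zeta|=r}|\sqrt{\psi(\zeta)}|$ and therefore
\begin{equation*}
\left|\frac{f(z)}{f'(z)}\right|=\frac{|z|}{|p(z)|}\le\frac{r}{\min_{|\zeta|=r}|\sqrt{\psi(\zeta)}|},\qquad |z|=r<1.
\end{equation*}

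From here the argument is identical to Theorem~\ref{mj}. Writing $g=\Phi f$ with $|\Phi|\le1$, using $g'=\Phi f'+\Phi'f$ together with the Schwarz–Pick inequality $|\Phi'(z)|\le(1-|\Phi(z)|^2)/(1-|z|^2)$, one obtains $|g'(z)|\le|f'(z)|\,h(\beta,r)$ with $\beta=|\Phi(z)|$ and $h(\beta,r)=\beta+\frac{1-\beta^2}{1-r^2}\cdot\frac{r}{\min_{|z|=r}|\sqrt{\psi(z)}|}$. The inequality $h(\beta,r)\le1$ is equivalent to $(1-r^2)\min_{|z|=r}|\sqrt{\psi(z)}|-(\beta+1)r\ge0$, whose left side decreases in $\beta$, so the worst case $\beta=1$ produces the displayed equation; its left side is continuous in $r$, equals $1$ at $r=0$ and is negative near $r=1$, hence has a least positive root $r_0\in(0,1)$, and $|g'(z)|\le|f'(z)|$ for $|z|\le r_0$. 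For sharpness when $\min_{|z|=r}|\sqrt{\psi(z)}|=\sqrt{\psi(\pm r)}$, take $f$ with $zf'(z)/f(z)=\sqrt{\psi(\pm z)}$ (same sign as in $\psi(\pm r)$): a direct check using $\psi(z)+z\psi'(z)=\phi(z)$ shows $q(z)+zq'(z)=\phi(\pm z)\prec\phi(z)$, so this $f$ satisfies \eqref{p2p}; then with $\Phi(z)=(z+\alpha)/(1+\alpha z)$, $-1\le\alpha\le1$, the extremal computation of Theorem~\ref{mj} — differentiating $h(r,\alpha)$ in $\alpha$ at $\alpha=1$ and invoking the defining equation for $r_0$ together with the negativity of $(1-r^2)\min_{|z|=r}|\sqrt{\psi(z)}|-2r$ for $r>r_0$ — shows majorization fails for $|z|>r_0$.

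The main obstacle is the passage through the square root: one must verify that the principal branch $\sqrt{\psi\circ\omega}$ is globally well defined on $\mathbb{D}$ (which uses $\Re\psi>0$) and that it is the branch coinciding with $p$ (which uses connectedness and the normalisation $p(0)=1$), and then convert the pointwise identity $|p(z)|=|\sqrt{\psi(\omega(z))}|$ into the uniform lower bound by applying the maximum modulus principle to the zero‑free function $1/\sqrt{\psi}$. Once $|f/f'|$ is controlled, everything else is a verbatim repetition of the proofs of Theorems~\ref{mj} and \ref{hallen}.
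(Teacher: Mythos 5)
Your proof is correct and follows essentially the same route as the paper's: the substitution $P=p^2$ reducing to the Hallenbeck--Ruscheweyh setting of Theorem~\ref{hallen}, the resulting bound $|f(z)/f'(z)|\le r/\min_{|z|=r}|\sqrt{\psi(z)}|$, and then the Schwarz--Pick argument of Theorem~\ref{mj}. The only difference is that you spell out the branch argument showing that $p^2\prec\psi$ forces $p\prec\sqrt{\psi}$ (and the minimum-modulus step), details the paper merely asserts.
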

\begin{proof}
	Let $p(z)=zf'(z)/f(z)$. Then the subordination \eqref{p2p} can be equivalently written as:
	\begin{equation*}
	p^2(z)+2 zp(z)p'(z) \prec \phi(z),
	\end{equation*}
	which using the change of variable $P(z)=p^2(z)$ becomes
	\begin{equation*}
	P(z)+zP'(z) \prec \phi(z).
	\end{equation*}
	Now proceeding as in Theorem~\ref{hallen}, we see that $p(z) \prec \sqrt{\psi(z)}$ and $\sqrt{\psi(z)}$ is the best dominant. Further, since $\Re\phi(z)>0$, using \cite[Theorem~ 4.2a, p.~202]{subbook}, we see that $\psi$ is a Carathe\'{o}dory function. Therefore,
	$$|\arg{\sqrt{\psi(z)}}|=\frac{1}{2}|\arg{\psi(z)}| \leq \frac{\pi}{4},$$
	which implies $\Re\sqrt{\psi(z)}>0$. Thus we have
	\begin{equation*}
	\frac{f(z)}{zf'(z)} \prec \frac{1}{\sqrt{\psi(z)}} \quad \text{which implies} \quad \left|\frac{f(z)}{f'(z)}\right| \leq \frac{r}{\min_{|z|=r}|\sqrt{\psi(z)}|}.
	\end{equation*}
	Now proceeding same as in the Theorem~\ref{mj} result follows.
\end{proof}		

\begin{corollary}
	Suppose $f\in \mathcal{A}$ satisfies the differential subordination
	\begin{equation*}
	\frac{zf'(z)}{f(z)}\left(\frac{zf'(z)}{f(z)}+2z\left(\frac{zf'(z)}{f(z)}\right)'\right) \prec \frac{1+(2\alpha -1)z}{1+z}.
	\end{equation*}
	If $g$ is majorized by $f$, then $|g'(z)|\leq |f'(z)|$ in $|z|\leq r_0$,
	where $r_0$ is the least positive root of the equation
	\begin{equation*}
	(1-r^2)\min_{|z|=r}\Re\sqrt{\psi(z)}-2r=0,
	\end{equation*}
	where $$\psi(z):=\frac{1}{z}\int_{0}^{z}{\frac{1+(2\alpha -1)t}{1+t}}dt.$$	
\end{corollary}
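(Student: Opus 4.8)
The plan is to obtain this corollary as an immediate specialization of the preceding theorem, with the choice $\phi(z)=(1+(2\alpha-1)z)/(1+z)$; the only real work is to check that this $\phi$ satisfies the theorem's hypotheses and to evaluate the associated function $\psi$.

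First I would verify admissibility of $\phi$. Writing $\phi(z)=1-2(1-\alpha)z/(1+z)$ displays $\phi$ as a M\"obius transformation with $\phi(0)=1$; since $z/(1+z)$ maps $\mathbb{D}$ onto the half-plane $\{\Re w<1/2\}$, it follows that $\phi$ maps $\mathbb{D}$ onto the half-plane $\{\Re w>\alpha\}$, which is convex. Thus $\phi$ is convex and univalent in $\mathbb{D}$, and for $0\le\alpha<1$ one has $\Re\phi(z)>\alpha\ge0$, so all hypotheses of the theorem are met.

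Next I would compute $\psi$. Since $(1+(2\alpha-1)t)/(1+t)=(2\alpha-1)+2(1-\alpha)/(1+t)$, integrating gives
\begin{equation*}
\psi(z)=\frac{1}{z}\int_{0}^{z}\frac{1+(2\alpha-1)t}{1+t}\,dt=(2\alpha-1)+\frac{2(1-\alpha)}{z}\log(1+z).
\end{equation*}
By the theorem (and its proof), $p(z)=zf'(z)/f(z)\prec\sqrt{\psi(z)}$ with $\sqrt{\psi}$ the best dominant, $\psi$ is a Carath\'{e}odory function by the integral-operator result \cite[Theorem~4.2a, p.~202]{subbook}, and consequently $|\arg\sqrt{\psi(z)}|\le\pi/4$, i.e.\ $\Re\sqrt{\psi(z)}>0$. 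Hence $|f(z)/f'(z)|\le r/\min_{|z|=r}\Re\sqrt{\psi(z)}$, and running the argument of Theorem~\ref{mj} verbatim---write $g=\Psi f$ with $|\Psi|\le1$, apply the Schwarz--Pick estimate to $\Psi'$, and use that the resulting majorant is decreasing in $|\Psi|$---reduces the claim $|g'(z)|\le|f'(z)|$ to the inequality $(1-r^2)\min_{|z|=r}\Re\sqrt{\psi(z)}-2r\ge0$, which holds exactly for $|z|\le r_0$, where $r_0$ is the least positive root of $(1-r^2)\min_{|z|=r}\Re\sqrt{\psi(z)}-2r=0$.

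I expect the only mildly delicate point to be the behaviour of $\min_{|z|=r}\Re\sqrt{\psi(z)}$; in principle one might want this in closed form, but the corollary records the radius implicitly through this quantity, so no closed form is needed and the two displays above complete the proof. Should one also want sharpness (which is not claimed here), the extra ingredient would be the identity $\min_{|z|=r}\Re\sqrt{\psi(z)}=\sqrt{\psi(\pm r)}$, which would follow from the symmetry of $\psi$ about the real axis together with a short monotonicity argument along the segment $(-1,1)$.
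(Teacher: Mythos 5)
Your proposal is correct and matches the paper's (implicit) argument: the corollary is stated there without proof as the direct specialization $\phi(z)=(1+(2\alpha-1)z)/(1+z)$ of the preceding theorem, exactly as you carry out, and your passage from the theorem's $\min_{|z|=r}|\sqrt{\psi(z)}|$ to the corollary's $\min_{|z|=r}\Re\sqrt{\psi(z)}$ is legitimate since $\Re\sqrt{\psi}>0$ gives $|\sqrt{\psi}|\ge\Re\sqrt{\psi}$. Your explicit evaluation $\psi(z)=(2\alpha-1)+2(1-\alpha)z^{-1}\log(1+z)$ is a small bonus the paper does not record.
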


\begin{corollary}
	Suppose $f\in \mathcal{A}$ satisfies the differential subordination
	\begin{equation*}
	\frac{zf'(z)}{f(z)}\left(\frac{zf'(z)}{f(z)}+2z\left(\frac{zf'(z)}{f(z)}\right)'\right) \prec 1+\alpha z, \quad (\alpha\in (0,1]).
	\end{equation*}
	If $g$ is majorized by $f$, then $|g'(z)|\leq |f'(z)|$ in $|z|\leq r_0$,
	where $r_0$ is the least positive root of the equation
	\begin{equation*}
	(1-r^2)\sqrt{1-\beta r}-2r=0, \quad\text{where}\quad \beta=\alpha/2.
	\end{equation*}
	The result is sharp.
\end{corollary}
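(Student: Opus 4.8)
The plan is to obtain this as the special case $\phi(z)=1+\alpha z$ of the theorem containing the subordination~\eqref{p2p}. First I would verify that $\phi(z)=1+\alpha z$ meets the hypotheses of that theorem: being affine it is (univalent, hence) convex in $\mathbb{D}$, it satisfies $\phi(0)=1$, and $\Re\phi(z)=1+\alpha\Re z>1-\alpha\geq0$ for $z\in\mathbb{D}$ when $\alpha\in(0,1]$, so $\Re\phi(z)>0$.

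Next I would identify the associated function $\psi$. Since $\psi(z)=\frac1z\int_0^z\phi(t)\,dt=\frac1z\int_0^z(1+\alpha t)\,dt=1+\frac{\alpha}{2}z=1+\beta z$ with $\beta=\alpha/2$, the best dominant for $zf'(z)/f(z)$ is $\sqrt{\psi(z)}=\sqrt{1+\beta z}$, and $\Re\sqrt{\psi(z)}>0$ by the argument already given in that theorem (as $\psi$ is a Carath\'{e}odory function). To evaluate $\min_{|z|=r}|\sqrt{\psi(z)}|$, note that $z\mapsto1+\beta z$ maps the circle $|z|=r$ onto the circle with centre $1$ and radius $\beta r$; since $\beta=\alpha/2\leq\frac12$ we have $1-\beta r>0$ for every $r\in[0,1)$, whence $\min_{|z|=r}|\psi(z)|=1-\beta r=\psi(-r)$, attained at $z=-r$, and therefore $\min_{|z|=r}|\sqrt{\psi(z)}|=\sqrt{1-\beta r}=\sqrt{\psi(-r)}$. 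Substituting this into the defining equation $(1-r^2)\min_{|z|=r}|\sqrt{\psi(z)}|-2r=0$ gives exactly $(1-r^2)\sqrt{1-\beta r}-2r=0$; as in the proof of Theorem~\ref{mj}, the left-hand side equals $1$ at $r=0$ and is negative at $r=1$, so it has a least positive root $r_0\in(0,1)$.

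Finally, sharpness is inherited directly from the sharpness clause of that theorem, which applies here because we have shown $\min_{|z|=r}|\sqrt{\psi(z)}|=\sqrt{\psi(-r)}$. I do not anticipate any genuine obstacle in this argument; the only step demanding a moment's attention is confirming that the minimum modulus of $\psi$ over $|z|=r$ is attained at $z=-r$ and stays positive, which is exactly where the restriction $\alpha\leq1$ (equivalently $\beta\leq\frac12$) enters.
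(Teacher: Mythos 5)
Your proposal is correct and follows exactly the route the paper intends: the corollary is the special case $\phi(z)=1+\alpha z$ of the theorem on the subordination \eqref{p2p}, with $\psi(z)=1+\beta z$, $\beta=\alpha/2$, and $\min_{|z|=r}|\sqrt{\psi(z)}|=\sqrt{1-\beta r}=\sqrt{\psi(-r)}$, which also triggers the theorem's sharpness clause. The paper states the corollary without proof for precisely this reason, and your verification of the hypotheses and of the minimum-modulus computation fills in the same details.
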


Now we state the following result without proof as it follows from Theorem~\ref{mj}:
\begin{theorem}\label{mj-s}
	Let $\psi\in\mathcal{P}$ be a univalent function
	such that  $$m_r:=\underset{|z|=r}{\min}|\psi(z)|=\left\{
	\begin{array}{ll}
	\psi(-r), & \hbox{ if } \psi'(0)>0;  \\
	\psi(r), & \hbox{ if } \psi'(0)<0.
	\end{array}
	\right.$$
	Let $g\in \mathcal{A}$ and $f\in \mathcal{S}^*(\psi)$. If $g$ is majorized by $f$ in $\mathbb{D}$, then
	\begin{equation*}\label{m-s}
	|g'(z)| \leq |f'(z)| \quad \text{in}\;|z|\leq r_{\psi},
	\end{equation*}
	where $r_{\psi}$ is the least positive root of the equation
	\begin{equation*}\label{r0-s}
	(1-r^2)m_r-2r=0.
	\end{equation*}
	The result is sharp.
\end{theorem}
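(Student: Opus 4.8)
The plan is to follow the proof of Theorem~\ref{mj} essentially verbatim; the only---and welcome---change is that the membership $f\in\mathcal{S}^*(\psi)$ already supplies the subordination $zf'(z)/f(z)\prec\psi(z)$, so no Briot--Bouquet or Hallenbeck--Ruscheweyh machinery is needed and $\psi$ is a univalent Carath\'{e}odory function by hypothesis. The first step is the distortion estimate. Since $\Re\psi>0$ makes $\psi$ non-vanishing on $\mathbb{D}$, the subordination gives a Schwarz function $\omega$ with $zf'(z)/f(z)=\psi(\omega(z))$, so that
\[
\frac{f(z)}{f'(z)}=\frac{z}{\psi(\omega(z))}\qquad(z\in\mathbb{D})
\]
is a well-defined analytic function (vanishing at the origin). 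Because $|\psi|$ attains no interior minimum (minimum modulus principle) and $|\omega(z)|\le|z|$, on $|z|=r$ one has $|\psi(\omega(z))|\ge\min_{|w|=r}|\psi(w)|=m_r$, whence
\[
\left|\frac{f(z)}{f'(z)}\right|\le\frac{r}{m_r}\qquad(0<r<1).
\]
Observe that this bound does not see the sign of $\psi'(0)$; only the evaluation of $m_r$ later will.

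From here the argument coincides with that of Theorem~\ref{mj}. Writing $g=\Psi f$ with $\Psi$ analytic and $|\Psi|\le1$, we get $g'=\Psi f'+\Psi'f$, and the Schwarz--Pick inequality $|\Psi'(z)|\le(1-|\Psi(z)|^2)/(1-|z|^2)$ together with the above gives
\[
|g'(z)|\le|f'(z)|\left(\beta+\frac{1-\beta^2}{1-r^2}\,\frac{r}{m_r}\right),\qquad\beta:=|\Psi(z)|.
\]
Exactly as in Theorem~\ref{mj}, the right-hand bracket is $\le1$ throughout $\beta\in[0,1]$ iff $k(1,r)\ge0$, where $k(\beta,r):=(1-r^2)m_r-(\beta+1)r$ and $\partial k/\partial\beta=-r<0$ identifies $\beta=1$ as the binding case. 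Now $k(1,r)=(1-r^2)m_r-2r$ is continuous with $k(1,0)=m_0=\psi(0)=1>0$; moreover $\log|\psi|$ is harmonic with circular mean $\log|\psi(0)|=0$, so $m_r\le1$ and hence $k(1,r)\le1-r^2-2r\to-2<0$ as $r\to1^-$. Thus $(1-r^2)m_r-2r=0$ has a least positive root $r_\psi$, with $k(1,r)\ge0$ on $[0,r_\psi]$, giving $|g'(z)|\le|f'(z)|$ in $|z|\le r_\psi$ for either orientation of $\psi$.

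For sharpness I would use the dichotomy on $\psi'(0)$. When $\psi'(0)>0$ (so $m_r=\psi(-r)$) take $f\in\mathcal{S}^*(\psi)$ with $zf'(z)/f(z)=\psi(-z)$, and when $\psi'(0)<0$ (so $m_r=\psi(r)$) take instead $zf'(z)/f(z)=\psi(z)$, which lies in $\mathcal{S}^*(\psi)$ trivially since the identity is a Schwarz function. In both cases $\psi$ is real and positive on $(-1,1)$ (symmetry of $\psi(\mathbb{D})$ plus $\Re\psi>0$), so $f(r)/f'(r)=r/m_r>0$. With $\Psi(z)=(z+\alpha)/(1+\alpha z)$, $-1\le\alpha\le1$, one obtains $g'(r)=f'(r)\,h(r,\alpha)$ where $h(r,1)=1$ and, at $\alpha=1$,
\[
\frac{\partial h(r,\alpha)}{\partial\alpha}=\frac{2}{(1+r)^2}\left(\frac{1-r^2}{2}-\frac{r}{m_r}\right),
\]
which is negative for every $r\in(r_\psi,1]$ since $k(1,r)=(1-r^2)m_r-2r<0$ there. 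Hence $h(r,1-\epsilon)>1$ for small $\epsilon>0$, so $g'(r)>f'(r)>0$ and $r_\psi$ cannot be enlarged. The only genuinely case-sensitive point is this last paragraph---choosing the extremal $f$ according to where $|\psi|$ realizes its circular minimum---and even there the computation of $\partial h/\partial\alpha$ is literally the same, so I expect no real obstacle: the whole statement is a transcription of Theorem~\ref{mj} with the subordination handed over as a hypothesis instead of being derived.
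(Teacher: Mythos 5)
Your proposal is correct and is exactly the intended argument: the paper states this theorem without proof, remarking that it "follows from Theorem~\ref{mj}", and your reconstruction is precisely that proof with the Briot--Bouquet step removed (since $zf'/f\prec\psi$ is now a hypothesis), the same Schwarz--Pick estimate, and the same extremal $\Psi(z)=(z+\alpha)/(1+\alpha z)$ with the extremal $f$ chosen as $\psi(-z)$ or $\psi(z)$ according to the sign of $\psi'(0)$. The small additions (minimum modulus principle for $\min_{|w|\le r}|\psi(w)|=m_r$, and the harmonic-mean argument giving $m_r\le1$ so that a root exists) are correct and only make explicit what the paper leaves implicit.
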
	

\begin{example}
	Let us consider the analytic functions $\psi_1(z)=\sqrt{1-z}$ and $\psi_2(z)=\sqrt{1+z}$. Note that ${\psi}'_1(0)<0$, ${\psi}'_2(0)>0$  and for $|z|=r$,
	$$m_{r_1}=\min_{|z|=r}|\psi_1(z)|=\psi_1(r)=\sqrt{1-r}=\psi_2(-r)=\min_{|z|=r}|\psi_2(z)|=m_{r_2}.$$
	Now from Theorem \ref{mj-s}, we obtain the following result:\\
	If $g\in\mathcal{A}$, $f\in \mathcal{S}^{*}(\psi_i)$, where $i=1,2$ and $g$ is majorized by $f$, then $|g'(z)|\leq |f'(z)|$ in $|z|\leq r_0$, where $r_0$ is the least positive root of the equation
	$$(1-r^2)\sqrt{1-r}-2r=0.$$
	Interestingly, the desired radius in both the cases remain same as $\psi_1(\mathbb{D})=\psi_2(\mathbb{D})$, though  $\psi_1$ and $\psi_2$ are oppositely oriented.	
\end{example}	

\begin{remark}\label{alpha-eta}
	Taking $\alpha=0$ or $\eta=1$ in corollary~\ref{all-result}, case $(ii)$ and $(iii)$, respectively, we obtain the result proved by T. H. MacGregor \cite{mc}, namely:
	{\it Let $g\in \mathcal{A}$ and $f\in \mathcal{S}^*$. If $g<<f$ in $\mathbb{D}$, then
		$|g'(z)| \leq |f'(z)|\; \text{in}\; |z|\leq 2-\sqrt{3}.$} The result is sharp.
\end{remark}

Now we obtain the following majorization results for some known classes as well those introduced and studied in \cite{kumar-2019,Goel,mendi2exp,raina-2015}.
\begin{corollary}\label{all-result}
	Let $g\in \mathcal{A}$ and $f\in \mathcal{S}^*(\psi)$. If $g<<f$ in $\mathbb{D}$, then $|g'(z)|\leq |f'(z)|$ in $|z|\leq r_{\psi}$, where  $r_{\psi}$ is the least positive root of the equation
	$P(r)=0$ 	and the result follows for each one of the following cases:
	\begin{itemize}
		\item [$(i)$]  	$P(r)=(1-r^2)((1-Dr)/(1-Er))-2r$ when $\psi(z)= \frac{1+Dz}{1+Ez}$, where $-1\leq E<D\leq1$.
		
		\item [$(ii)$]	$P(r)=(1-r)(1-(1-2\alpha)r)-2r$ when $\psi(z)=\frac{1+(1-2\alpha)z}{1+z}$, where $0\leq\alpha<1$.
		
		\item [$(iii)$] $P(r)=(1-r^2)((1-r)/(1+r))^{\eta}-2r$ when $\psi(z)=\left(\frac{1+z}{1-z}\right)^{\eta}$, where $0<\eta \leq1$.
		
		\item [$(iv)$] 	$P(r)=(1-r^2)\left(\sqrt{2}-(\sqrt{2}-1)\sqrt{\frac{1+r}{1-2(\sqrt{2}-1)r}}\right)-2r$ when  $\psi(z)=\sqrt{2}-(\sqrt{2}-1)\sqrt{\frac{1-z}{1+2(\sqrt{2}-1)z}}$.
		
		\item [$(v)$] 	$P(r)=(1-r^2)(b(1-r))^{1/a}-2r$ when $\psi(z)=(b(1+z))^{1/a}$, where $a\geq1$ and $b\geq 1/2$.

		\item [$(vi)$] $P(r)=(1-r^2)-2re^{r}$ when $\psi(z)=e^z$.

		\item [$(vii)$] $P(r)=(1-r^2)(\sqrt{1+r^2}-r)-2r$ when $\psi(z)=z+\sqrt{1+z^2}$.

		\item [$(viii)$] $P(r)=(1-r^2)-r(1+e^r)$ when $\psi(z)=\frac{2}{1+e^{-z}}$.
		
		\item [$(ix)$] \label{sin} $P(r)=(1-r^2)(1-\sin{r})-2r$ when $\psi(z)=1+\sin{z}$.
		
	\end{itemize}
The results are sharp.
\end{corollary}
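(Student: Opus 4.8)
The plan is to treat Corollary~\ref{all-result} as a sequence of direct specializations of Theorem~\ref{mj-s}. For each listed $\psi$, the only real work is to verify the two hypotheses of Theorem~\ref{mj-s}: that $\psi$ is a univalent Carath\'eodory function, and that its minimum modulus on the circle $|z|=r$ is attained at $-r$ when $\psi'(0)>0$ (or at $r$ when $\psi'(0)<0$). Once these are checked, the defining equation $(1-r^2)m_r - 2r = 0$ becomes precisely the equation $P(r)=0$ stated in each case, after substituting $m_r = \psi(-r)$ (resp.\ $\psi(r)$) and simplifying. So the proof is really nine short verifications plus one invocation of the theorem.

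Concretely, I would proceed case by case. In each case I first record $\psi'(0)$ to know which endpoint governs $m_r$: for $(i)$ with $\psi(z)=(1+Dz)/(1+Ez)$ we have $\psi'(0)=D-E>0$, so $m_r = \psi(-r) = (1-Dr)/(1-Er)$ for $r$ small enough that $1-Er>0$; for $(ii)$, $(iii)$, $(v)$, $(vi)$, $(vii)$, $(ix)$ the relevant coefficient $B_1$ is positive, so again $m_r=\psi(-r)$, giving respectively $(1-(1-2\alpha)r)/(1+r)$ for the truncated bound in $(ii)$ (here one uses that on $|z|=r$ the minimum of the Janowski-type function is at $z=-r$), $((1-r)/(1+r))^\eta$ in $(iii)$, $(b(1-r))^{1/a}$ in $(v)$, $e^{-r}$ in $(vi)$ (so $(1-r^2)e^{-r}-2r=0$, i.e.\ $(1-r^2)-2re^r=0$), $\sqrt{1+r^2}-r$ in $(vii)$ since $\psi(-r)=-r+\sqrt{1+r^2}$, and $1-\sin r$ in $(ix)$ since $\psi(-r)=1-\sin r$. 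Case $(iv)$ has $\psi'(0)<0$ (the function is built from $\sqrt{(1-z)/(1+\cdots z)}$ with a minus sign), so $m_r=\psi(r)=\sqrt2-(\sqrt2-1)\sqrt{(1+r)/(1-2(\sqrt2-1)r)}$, matching $P$. Case $(viii)$, $\psi(z)=2/(1+e^{-z})$, has $\psi'(0)=1/2>0$, and $\psi(-r)=2/(1+e^{r})$; then $(1-r^2)\cdot 2/(1+e^r)-2r=0$ rearranges to $(1-r^2)-r(1+e^r)=0$, as claimed. For each $\psi$ the univalence and $\Re\psi>0$ properties are already established in the cited references \cite{kumar-2019,Goel,mendi2exp,raina-2015} (and are classical for Janowski functions), so I would simply cite those; the symmetry of $\psi(\mathbb D)$ about the real axis together with $\Re\psi>0$ and the location of the extreme point of $\Re\psi$ gives $\min_{|z|=r}|\psi(z)| = \min_{|z|=r}\Re\psi(z) = \psi(\mp r)$, exactly as in the proof of Corollary~\ref{AB}.

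Finally, sharpness in every case is inherited verbatim from the sharpness clause of Theorem~\ref{mj-s}, since in each case the minimum modulus is attained at a real boundary point ($m_r=\psi(-r)$ or $\psi(r)$), which is precisely the hypothesis under which Theorem~\ref{mj-s} asserts sharpness; the extremal pair is $f$ with $zf'(z)/f(z)=\psi(\mp z)$ and $\Psi(z)=(z+\alpha)/(1+\alpha z)$ as in the proof of Theorem~\ref{mj}.

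The only genuine obstacle, case by case, is confirming that the minimum of $|\psi(z)|$ on $|z|=r$ really sits at the real endpoint dictated by the sign of $\psi'(0)$ --- this can fail for functions whose image is not ``nicely'' positioned relative to the origin. For the lens- and leaf-type functions in $(iv)$ and $(vii)$ and the sine function in $(ix)$ one should check this directly: writing $\psi(z)=u+iv$ and using symmetry about the real axis, $|\psi(re^{i\theta})|^2$ is even in $\theta$, and one verifies its derivative in $\theta$ does not vanish on $(0,\pi)$, equivalently that $\Re\psi$ decreases monotonically as $\theta$ runs from $0$ to $\pi$. This monotonicity is exactly what the references establish when proving these classes satisfy the Ma--Minda condition, so I would invoke that; for the Janowski case $(i)$--$(ii)$ it is an elementary Möbius computation, and for $(iii)$, $(v)$, $(vi)$, $(viii)$ it follows since $\psi$ is a composition of such a Möbius map (or $e^z$, whose modulus on $|z|=r$ is monotone in $\RE z$) with a monotone real function. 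Assembling these observations, the corollary follows immediately from Theorem~\ref{mj-s}.
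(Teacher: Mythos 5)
Your proposal is correct and follows exactly the route the paper intends: the corollary is stated without proof precisely because it is a direct specialization of Theorem~\ref{mj-s}, and your case-by-case verification that $m_r=\min_{|z|=r}|\psi(z)|$ equals $\psi(-r)$ (or $\psi(r)$) followed by substitution into $(1-r^2)m_r-2r=0$ is the whole argument. Two small points need fixing. In case $(iv)$ you assert $\psi'(0)<0$ and hence $m_r=\psi(r)$; in fact $\psi'(0)=(\sqrt2-1)(2\sqrt2-1)/2>0$, so $m_r=\psi(-r)=\sqrt2-(\sqrt2-1)\sqrt{(1+r)/(1-2(\sqrt2-1)r)}$ --- which is exactly the expression you wrote down, so your final formula is right but the orientation claim is backwards. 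In case $(ii)$ your value $m_r=(1-(1-2\alpha)r)/(1+r)$ corresponds to $\psi(z)=(1+(1-2\alpha)z)/(1-z)$ rather than the $(1+z)$ denominator printed in the statement (which would be constant at $\alpha=0$ and could not recover MacGregor's $2-\sqrt3$ as Remark~\ref{alpha-eta} requires); your reading is the intended one and matches the stated $P(r)$.
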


\begin{remark}
	In Corollary~\ref{all-result}, case $(ix)$, we obtained the radius $r_{\psi}\approx0.312478$ which improves the majorization-radius $r_s\approx0.309757$ obtained in \cite{Tang-HM-2019}.
\end{remark}

Let $\psi(z)=1+z/(1-\alpha z^2)$, $0\leq \alpha<1$,  introduced and studied by Kargar et al. \cite{kargar-2019}.  Clearly $\psi\in \mathcal{P}$ only when $\alpha=0$ and hence Theorem \ref{mj-s} holds when $\psi(z)=1+z$. Moreover, for some $r>0$, the quantity  $z/\psi(z)$ does not exist for all $|z|=r$. In view of the same, the result proved by  Tang and  Deng \cite{tang}, needs correction and the corrected version is stated in the following corollaries:
\begin{corollary}
	Let $g\in \mathcal{A}$ and $f\in \mathcal{S}^*(1+\beta z)$, $0<\beta\leq1$. If $g<<f$ in $\mathbb{D}$, then
	$$|g'(z)| \leq |f'(z)|\quad \text{in}\quad |z|\leq r_{\beta},$$
	where $r_{\beta}$ is the least positive root of the equation
	$$(1-r^2)(1-\beta r)-2r=0.$$
	The result is sharp.
\end{corollary}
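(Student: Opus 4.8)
The plan is simply to specialize Theorem~\ref{mj-s} to the Ma--Minda function $\psi(z) = 1 + \beta z$; thus the work reduces to checking that this $\psi$ meets the hypotheses of that theorem and then reading off the radius equation.

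First I would verify that $\psi(z) = 1+\beta z$ is an admissible choice in Theorem~\ref{mj-s}. Being affine, $\psi$ is analytic and univalent in $\mathbb{D}$, with $\psi(0) = 1$ and $\psi'(0) = \beta > 0$; since $0 < \beta \le 1$, for every $z \in \mathbb{D}$ we have $\Re \psi(z) = 1 + \beta\,\Re z > 1 - \beta \ge 0$, so $\psi \in \mathcal{P}$, and $\psi(\mathbb{D})$ is the open disc of radius $\beta$ centred at $1$, hence symmetric about the real axis. Because $\psi'(0) = \beta > 0$, Theorem~\ref{mj-s} asks for $m_r := \min_{|z|=r}|\psi(z)| = \psi(-r)$. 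On $|z| = r$ one has $|\psi(z)| = |1+\beta z| \in [\,1-\beta r,\, 1+\beta r\,]$, and since $\beta r < 1$ the value $1 - \beta r$ is genuinely attained (at $z = -r$) and is the minimum; hence $m_r = \psi(-r) = 1 - \beta r$.

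With this in hand, substituting $m_r = 1-\beta r$ into the defining equation $(1-r^2)m_r - 2r = 0$ of Theorem~\ref{mj-s} gives exactly $(1-r^2)(1-\beta r) - 2r = 0$; its left-hand side equals $1$ at $r=0$ and $-2$ at $r=1$, so by continuity it has a least positive root $r_\beta \in (0,1)$, exactly as in the proof of Theorem~\ref{mj}. Theorem~\ref{mj-s} then delivers $|g'(z)| \le |f'(z)|$ for $|z| \le r_\beta$, together with sharpness, completing the argument. There is no genuine obstacle here; the only point demanding a moment's care is the modulus computation $m_r = 1 - \beta r$, which rests on the elementary inequality $\beta r < 1$ valid throughout $\mathbb{D}$.
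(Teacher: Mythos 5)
Your proposal is correct and follows exactly the route the paper intends: the corollary is an immediate specialization of Theorem~\ref{mj-s} to $\psi(z)=1+\beta z$, and your verification that $\psi\in\mathcal{P}$ with $m_r=\min_{|z|=r}|1+\beta z|=1-\beta r=\psi(-r)$ is precisely the only computation required. Nothing is missing.
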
	

Now we obtain the result related to $\mathcal{BS}(\alpha)$, the class of Booth lemniscate starlike functions, when $\alpha\neq0$.
\begin{corollary}
	Let $0<\alpha<1$  and $r_{\alpha}$ be the unique root of the equation
	\begin{equation}\label{boothroot}
	\alpha r^2+r-1=0.
	\end{equation}
	Let $g\in \mathcal{A}$ and $g<< f$ in $\mathbb{D}$, where $f\in\mathcal{BS}(\alpha)$. Then $$|g'(z)|\leq|f'(z)| \quad\text{in}\quad |z|\leq r_{B(\alpha)}:=\min\{r_{\alpha}, r_0\},$$ where
	$r_0$ is the least positive root of the equation
	$$(1-r^2)\left(1-\frac{r}{1-\alpha r^2}\right)-2r=0.$$
	The result is sharp.
\end{corollary}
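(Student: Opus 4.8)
The class $\mathcal{BS}(\alpha)$ is $\mathcal{S}^*(\psi)$ with $\psi(z)=1+z/(1-\alpha z^2)$, so $f\in\mathcal{BS}(\alpha)$ means $zf'(z)/f(z)=\psi(\omega(z))$ for some Schwarz function $\omega$. Since $\psi$ fails to be a Carath\'{e}odory function when $\alpha\neq0$, Theorem~\ref{mj-s} does not apply verbatim; the plan is to re-run the majorization computation of Theorem~\ref{mj} --- the part that uses only the subordination $zf'/f\prec\psi$, the resulting bound on $|f/f'|$, and the Schwarz--Pick inequality --- after first fixing (i) the largest radius on which $\psi$ stays zero-free and (ii) the value of $m_r:=\min_{|z|=r}|\psi(z)|$ on that range.

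For (i), write $\psi(z)=(1+z-\alpha z^2)/(1-\alpha z^2)$. The denominator has no zero in $\mathbb{D}$ (its zeros have modulus $1/\sqrt{\alpha}>1$), so the zeros of $\psi$ in $\mathbb{D}$ are exactly the roots of $\alpha z^2-z-1=0$; the one of smaller modulus is $z=-r_\alpha$ with $\alpha r_\alpha^2+r_\alpha-1=0$, i.e.\ $r_\alpha$ is precisely the root in \eqref{boothroot}, and $r_\alpha\in(0,1)$ for $0<\alpha<1$. Hence $\psi$ is zero-free on $\{|z|\le r\}$ exactly when $r<r_\alpha$; past $|z|=r_\alpha$ the reciprocal $1/\psi$ (needed to bound $f/f'$) can cease to exist on a circle, and this is what forces the cut-off at $r_\alpha$.

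For (ii), a short calculation gives $\Re\psi(re^{i\theta})=1+\dfrac{r(1-\alpha r^2)\cos\theta}{(1+\alpha r^2)^2-4\alpha r^2\cos^2\theta}$, and the map $c\mapsto\dfrac{c}{(1+\alpha r^2)^2-4\alpha r^2c^2}$ is increasing on $[-1,1]$, so $\min_{|z|=r}\Re\psi(z)=\psi(-r)=1-r/(1-\alpha r^2)$, which is positive precisely for $r<r_\alpha$. Thus for $r<r_\alpha$ one has $\Re\psi>0$ on $\{|z|\le r\}$, so $|\psi(w)|\ge\Re\psi(w)\ge\psi(-r)$ whenever $|w|\le r$, with equality at $w=-r$; equivalently, since $\psi$ is zero-free there, the minimum-modulus principle places the minimum of $|\psi|$ on the boundary. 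Either way $m_r=\psi(-r)=1-r/(1-\alpha r^2)$.

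With this, the argument of Theorem~\ref{mj} goes through: from $zf'(z)/f(z)=\psi(\omega(z))$ and $|\omega(z)|\le|z|$ we obtain $|f(z)/f'(z)|\le r/m_r$ for $|z|=r<r_\alpha$; writing $g=\Psi f$ with $\Psi$ analytic, $|\Psi|\le1$, and applying $|\Psi'(z)|\le(1-|\Psi(z)|^2)/(1-r^2)$ yields $|g'(z)|\le|f'(z)|\bigl(\beta+\frac{1-\beta^2}{1-r^2}\frac{r}{m_r}\bigr)$ with $\beta=|\Psi(z)|$, and this factor is $\le1$ for all $\beta\in[0,1]$ iff $(1-r^2)m_r-2r\ge0$, i.e.\ iff $r\le r_0$, with $r_0$ the least positive root of $(1-r^2)(1-r/(1-\alpha r^2))-2r=0$. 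Intersecting the requirements $r<r_\alpha$ and $r\le r_0$ gives the claimed radius $\min\{r_\alpha,r_0\}$, and sharpness is inherited from Theorem~\ref{mj} by taking $f$ with $zf'(z)/f(z)=\psi(-z)$ and $\Psi(z)=(z+a)/(1+az)$, $-1\le a\le1$. The one point needing genuine care is (ii): the $\mathcal{P}$-membership of $\psi$ that Theorem~\ref{mj-s} relies on is replaced here by the weaker fact that $\Re\psi>0$ only on $\{|z|<r_\alpha\}$, and that is exactly why the extra factor $r_\alpha$ enters the final radius.
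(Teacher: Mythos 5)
Your proposal is correct and takes essentially the same route as the paper: the paper's own proof simply observes that $\Re\left(1+\frac{z}{1-\alpha z^2}\right)>0$ only for $|z|<r_\alpha$, so that the bound \eqref{m1} on $|f(z)/f'(z)|$ is valid only on that range, and then lets the argument of Theorem~\ref{mj} run as before. Your explicit computation of $\Re\psi(re^{i\theta})$, the identification of the minimum at $z=-r$ giving $m_r=1-r/(1-\alpha r^2)$, and the zero of $\psi$ at $-r_\alpha$ merely fill in the details the paper leaves implicit.
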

\begin{proof}
	Observe that $\Re\left(1+\frac{z}{1-\alpha z^2}\right)>0$ for $|z|<r_\alpha$, where $r_\alpha$ is the unique root of \eqref{boothroot}. Thus the inequality in \eqref{m1} holds for $|z|=r<r_{\alpha}$ and the result follows at once.
\end{proof}	
	
	\section{\bf{Product of starlike functions and a sufficient condition }}\label{sec-2}
	Assume that $\psi_1$ and $\psi_2$ belong to $\mathcal{P}$ and satisfy the following  conditions for $|z|=r$ and $i=1,2$
	\begin{equation}\label{p}
	\max_{|z|=r}\Re\psi_i(z)= \psi_i(r) \quad \text{and} \quad \min_{|z|=r}\Re\psi_i(z)= \psi_i(-r).
	\end{equation}
	
	Motivated by Obradovi\'{c} and Ponnusamy \cite{obPonnu}, in this section, we consider the radius problem to generalize their result and also establish a similar result for the Urlagaddi class $\mathcal{M}(\beta):=\{f\in\mathcal{A}: \Re(zf'(z)/f(z))<\beta,\; \beta>1\}$. Also we extend a result of the Bulboac\u{a} and Tuneski~\cite{Bulboca-2003} for the class $\mathcal{S}^*(\psi)$.
	\begin{theorem}\label{thm-urlla}
		Let $g \in \mathcal{S}^*(\psi_1)$ and $h \in \mathcal{S}^*(\psi_2)$, where $\psi_{i}$ satisfy the first condition in \eqref{p}. Then the function $F$ defined by
		\begin{equation}\label{product}
		F(z)=\frac{g(z) h(z)}{z}
		\end{equation}
		belongs to $\mathcal{M}(\beta)$ in the disk $|z|< r_\beta=\min\{1,r_0(\beta)\}$, where $r_0(\beta)$ is the least positive root of the equation
		\begin{equation}\label{urlla-1}
		\psi_1(r)+\psi_2(r)-1-\beta=0.
		\end{equation}
		The radius $r_\beta$ is sharp.
	\end{theorem}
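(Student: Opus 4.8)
The plan is to reduce the statement to a single pointwise bound on $\Re\bigl(zF'(z)/F(z)\bigr)$. First I would record that $F\in\mathcal{A}$: since $g(z)=z+\cdots$ and $h(z)=z+\cdots$ lie in $\mathcal{S}^*(\psi_i)\subset\mathcal{S}^*$ and are therefore univalent, the product $g(z)h(z)$ vanishes only at the origin, to second order, so $F(z)=g(z)h(z)/z=z+\cdots$ is analytic on $\mathbb{D}$ and non-vanishing on $\mathbb{D}\setminus\{0\}$. Logarithmic differentiation of $F$ then yields the identity
\begin{equation*}
\frac{zF'(z)}{F(z)}=\frac{zg'(z)}{g(z)}+\frac{zh'(z)}{h(z)}-1 .
\end{equation*}

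Next I would bring in the two subordinations: since $g\in\mathcal{S}^*(\psi_1)$ and $h\in\mathcal{S}^*(\psi_2)$, there are Schwarz functions $\omega_1,\omega_2$ with $zg'(z)/g(z)=\psi_1(\omega_1(z))$ and $zh'(z)/h(z)=\psi_2(\omega_2(z))$. Taking real parts in the identity above and using $|\omega_i(z)|\le|z|=r$ together with the harmonicity of $\Re\psi_i$, the maximum principle and the first condition in \eqref{p} give $\Re\psi_i(\omega_i(z))\le\max_{|w|\le r}\Re\psi_i(w)=\max_{|w|=r}\Re\psi_i(w)=\psi_i(r)$. Hence, for $|z|=r$,
\begin{equation*}
\Re\frac{zF'(z)}{F(z)}\le\psi_1(r)+\psi_2(r)-1=:\Lambda(r),
\end{equation*}
so $F\in\mathcal{M}(\beta)$ on $|z|<\rho$ as soon as $\Lambda(r)<\beta$ for all $r<\rho$.

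It then remains to analyse $\Lambda$. Hypothesis \eqref{p} forces each $\psi_i(r)$ to be real on $(-1,1)$, being a supremum of real parts, and by the same maximum-principle argument $r\mapsto\psi_i(r)$ is non-decreasing; thus $\Lambda$ is continuous and non-decreasing on $[0,1)$ with $\Lambda(0)=\psi_1(0)+\psi_2(0)-1=1<\beta$. Consequently, either $\Lambda<\beta$ throughout $[0,1)$, in which case $r_\beta=1$ works, or equation \eqref{urlla-1} has a least positive root $r_0(\beta)$ and, as it is the \emph{least} root, continuity forces $\Lambda(r)<\beta$ for all $r<\min\{1,r_0(\beta)\}=r_\beta$. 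In either case $\Re\bigl(zF'(z)/F(z)\bigr)<\beta$ on $|z|<r_\beta$, i.e. $F\in\mathcal{M}(\beta)$ there.

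Finally, for sharpness (the case $r_0(\beta)<1$; otherwise $r_\beta=1$ and nothing is to be shown) I would take $g_0,h_0\in\mathcal{A}$ with $zg_0'(z)/g_0(z)=\psi_1(z)$ and $zh_0'(z)/h_0(z)=\psi_2(z)$, so that $F_0(z)=g_0(z)h_0(z)/z$ satisfies $zF_0'(z)/F_0(z)=\psi_1(z)+\psi_2(z)-1$, whose value at the real point $z=r_\beta=r_0(\beta)$ is exactly $\Lambda(r_0(\beta))=\beta$. As this is not $<\beta$, $F_0$ cannot lie in $\mathcal{M}(\beta)$ on any disk $|z|<r$ with $r>r_\beta$, so $r_\beta$ is best possible. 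I expect the only genuinely delicate point to be the discussion of $\Lambda$: one must be sure that \eqref{p} really makes each $\psi_i(r)$ real and monotone, so that $r_0(\beta)$ is well defined and the inequality on $[0,r_\beta)$ is strict; everything else is the short computation sketched above.
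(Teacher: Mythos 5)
Your proposal is correct and follows essentially the same route as the paper: the logarithmic-derivative identity $zF'/F=zg'/g+zh'/h-1$, the bound $\Re\psi_i(\omega_i(z))\leq\psi_i(r)$ from the first condition in \eqref{p}, and sharpness via the extremal functions with $zg_0'/g_0=\psi_1$, $zh_0'/h_0=\psi_2$. The only difference is that you spell out details the paper leaves implicit (realness and monotonicity of $r\mapsto\psi_i(r)$, existence of the least root, strictness of the inequality below $r_0(\beta)$), which is a welcome but not substantively different addition.
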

	\begin{proof}
		Let $g\in \mathcal{S}^*(\psi_1)$ and $h\in \mathcal{S}^*(\psi_2)$. Then in view of \eqref{p} and subordination principle, it follows that
		$$\Re\frac{zg'(z)}{g(z)}\leq\psi_1(r)\quad\text{and}\quad \Re\frac{zh'(z)}{h(z)}\leq\psi_2(r)$$
		in $|z|\leq r<1$. Since
		$$\frac{zF'(z)}{F(z)}=\frac{zg'(z)}{g(z)}+\frac{zh'(z)}{h(z)}-1,$$
		we have for $|z|=r$,
		$$\Re\frac{zF'(z)}{F(z)}\leq \psi_1( r)+\psi_2( r)-1\leq\beta,$$
		whenever $r\leq\min\{1,r_0(\beta)\}$, where $r_0(\beta)$ is the least positive root of the equation \eqref{urlla-1}.
		The sharpness follows by considering the functions
		$$	g(z)=z\exp{\int_{0}^{z}\frac{\psi_1(t)-1}{t}dt} \quad \text{and} \quad
		h(z)=z\exp{\int_{0}^{z}\frac{\psi_2(t)-1}{t}dt}.$$
	\end{proof}
	
	\begin{corollary}
		Let $g \in \mathcal{S}^*(\gamma)$ and $h\in \mathcal{S}^*(\tau)$. Then the function $F$ defined in \eqref{product} belongs to $\mathcal{M}(\beta)$ in the disk $|z|< \min\{1, r_0(\beta)\}$, where $$r_0(\beta)=\frac{\beta-1}{3+\beta-2(\gamma+\tau)}.$$
	\end{corollary}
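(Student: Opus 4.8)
The plan is to realize this corollary as the direct specialization of Theorem~\ref{thm-urlla} to the Janowski-type functions generating the classes of starlike functions of order $\gamma$ and $\tau$. First I would recall that $\mathcal{S}^*(\gamma)=\mathcal{S}^*(\psi_1)$ and $\mathcal{S}^*(\tau)=\mathcal{S}^*(\psi_2)$, where
\[
\psi_1(z)=\frac{1+(1-2\gamma)z}{1-z}=\gamma+(1-\gamma)\frac{1+z}{1-z},\qquad
\psi_2(z)=\frac{1+(1-2\tau)z}{1-z}=\tau+(1-\tau)\frac{1+z}{1-z}.
\]
Both lie in $\mathcal{P}$, and since the M\"obius transformation $z\mapsto (1+z)/(1-z)$ carries the circle $|z|=r$ onto a circle symmetric about the real axis whose rightmost point is attained at $z=r$, we get $\max_{|z|=r}\Re\psi_i(z)=\psi_i(r)$ for $i=1,2$; that is, the first condition in \eqref{p} holds. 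Hence Theorem~\ref{thm-urlla} applies and yields $F\in\mathcal{M}(\beta)$ in $|z|<\min\{1,r_0(\beta)\}$, where $r_0(\beta)$ is the least positive root of $\psi_1(r)+\psi_2(r)-1-\beta=0$.

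The remaining step is the routine computation of this root. Substituting $\psi_i(r)=(1+(1-2\gamma)r)/(1-r)$, respectively $(1+(1-2\tau)r)/(1-r)$, and clearing the common denominator $1-r>0$, the equation $\psi_1(r)+\psi_2(r)-1-\beta=0$ collapses to the linear equation
\[
(1-\beta)+\bigl(3+\beta-2(\gamma+\tau)\bigr)r=0,
\]
whose unique solution is $r_0(\beta)=(\beta-1)/(3+\beta-2(\gamma+\tau))$. Since $0\le\gamma,\tau<1$ and $\beta>1$, the denominator satisfies $3+\beta-2(\gamma+\tau)>\beta-1>0$, so $r_0(\beta)$ is a well-defined number in $(0,1)$; in particular $\min\{1,r_0(\beta)\}=r_0(\beta)$, though I would retain the $\min$-form to mirror the statement of Theorem~\ref{thm-urlla}.

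There is essentially no obstacle here beyond bookkeeping: the only points deserving a line of justification are that the chosen $\psi_i$ genuinely satisfy the hypothesis \eqref{p} (handled by the M\"obius-image observation above) and that the root produced is indeed the least positive one (automatic, since after clearing denominators the equation is linear in $r$). If one wishes to record the sharpness, it is inherited from Theorem~\ref{thm-urlla} through the extremal pair $g(z)=z/(1-z)^{2(1-\gamma)}$ and $h(z)=z/(1-z)^{2(1-\tau)}$, for which $zg'(z)/g(z)=\psi_1(z)$ and $zh'(z)/h(z)=\psi_2(z)$.
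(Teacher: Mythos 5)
Your proposal is correct and follows exactly the route the paper intends: the corollary is stated as an immediate specialization of Theorem~\ref{thm-urlla} to $\psi_i(z)=(1+(1-2\gamma)z)/(1-z)$ and $(1+(1-2\tau)z)/(1-z)$, and your verification of the condition $\max_{|z|=r}\Re\psi_i(z)=\psi_i(r)$ together with the linear computation yielding $r_0(\beta)=(\beta-1)/(3+\beta-2(\gamma+\tau))$ is precisely the omitted bookkeeping. Nothing further is needed.
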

	
	The proof of the following result is much akin to Theorem~\ref{thm-urlla}, so omitted here.
	\begin{theorem}\label{thm-oder}
		Let $g \in \mathcal{S}^*(\psi_1)$ and $h \in \mathcal{S}^*(\psi_2)$, where $\psi_{i}$ satisfy the second condition in \eqref{p}. Then the function $F$ defined in \eqref{product} is starlike of order $\gamma$ in the disk $|z|<r_{\gamma}$, where $r_{\gamma}$ is the least positive root of the equation
		\begin{equation*}\label{str-oder}
		\psi_1(- r)+\psi_2(- r)-1-\gamma=0.
		\end{equation*}
		The radius $r_{\gamma}$ is sharp.
	\end{theorem}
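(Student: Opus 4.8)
The plan is to follow the pattern of the proof of Theorem~\ref{thm-urlla} almost verbatim, only replacing each occurrence of $\max_{|z|=r}\Re\psi_i$ by $\min_{|z|=r}\Re\psi_i$. First I would observe that $F(z)=g(z)h(z)/z$ lies in $\mathcal{A}$ (since $g(z)h(z)=z^2+\cdots$) and is non-vanishing on $0<|z|<1$ because $g,h$ are starlike, hence univalent; therefore $zF'(z)/F(z)$ is a well-defined analytic function there, and taking $z\tfrac{d}{dz}\log$ of the product gives the identity
\[
\frac{zF'(z)}{F(z)}=\frac{zg'(z)}{g(z)}+\frac{zh'(z)}{h(z)}-1 .
\]

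Next, from $g\in\mathcal{S}^*(\psi_1)$ we have $zg'(z)/g(z)=\psi_1(\omega(z))$ for some Schwarz function $\omega$; for $|z|\le r$ this places $zg'(z)/g(z)$ in the image of the closed disk $|w|\le r$ under $\psi_1$, and by the minimum principle for the harmonic function $\Re\psi_1$ together with the second condition in \eqref{p}, we obtain $\Re\bigl(zg'(z)/g(z)\bigr)\ge\min_{|w|=r}\Re\psi_1(w)=\psi_1(-r)$ on $|z|\le r<1$. The same bound holds for $h$ and $\psi_2$. Combining these with the displayed identity yields, for $|z|=r$,
\[
\Re\frac{zF'(z)}{F(z)}\ \ge\ \psi_1(-r)+\psi_2(-r)-1=:\Phi(r).
\]
Since $\Phi$ is continuous and strictly decreasing on $[0,1)$ — indeed $\psi_i(-r)=\min_{|w|\le r}\Re\psi_i(w)$ is non-increasing as the minimum over a growing set, and strictly so by univalence of $\psi_i$ — and $\Phi(0)=1>\gamma$, the inequality $\Phi(r)\ge\gamma$ holds exactly for $r$ up to the least positive root $r_\gamma$ of $\psi_1(-r)+\psi_2(-r)-1-\gamma=0$. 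Hence $\Re\bigl(zF'(z)/F(z)\bigr)>\gamma$ in $|z|<r_\gamma$ (intersected with $\mathbb{D}$ if $r_\gamma>1$), i.e. $F$ is starlike of order $\gamma$ there.

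For sharpness I would take the Ma--Minda extremal functions $g_0(z)=z\exp\int_0^z\frac{\psi_1(t)-1}{t}\,dt$ and $h_0(z)=z\exp\int_0^z\frac{\psi_2(t)-1}{t}\,dt$, which satisfy $zg_0'(z)/g_0(z)=\psi_1(z)$ and $zh_0'(z)/h_0(z)=\psi_2(z)$ identically, so that the corresponding $F_0$ has $zF_0'(z)/F_0(z)=\psi_1(z)+\psi_2(z)-1$. Evaluating at the real point $z=-r$ and using that the $\psi_i$ have real Taylor coefficients (so $\psi_i(-r)\in\mathbb{R}$) gives $\Re\bigl(zF_0'(-r)/F_0(-r)\bigr)=\Phi(r)$, which equals $\gamma$ at $r=r_\gamma$ and drops below $\gamma$ for $r$ slightly larger; consequently no disk $|z|<\rho$ with $\rho>r_\gamma$ can work, and $r_\gamma$ is sharp.

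The argument is essentially routine, and the single step I would be most careful to justify cleanly is the passage $zg'/g\prec\psi_1\Rightarrow\Re(zg'/g)\ge\psi_1(-r)$ on $|z|\le r$: the second condition in \eqref{p} is precisely what guarantees that the boundary minimum of $\Re\psi_1$ over $|w|=r$ controls the whole closed sub-disk, which is what lets the pointwise bound pass through the subordination. The monotonicity of $\Phi$ — needed both for the existence of the relevant root and for the ``$<\gamma$ beyond $r_\gamma$'' claim in the sharpness part — is the only other point worth a line of explanation.
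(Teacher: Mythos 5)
Your proposal is correct and is essentially the proof the paper intends: the paper explicitly omits the argument as being ``much akin to Theorem~\ref{thm-urlla}'', and you reproduce exactly that proof with $\max_{|z|=r}\Re\psi_i$ replaced by $\min_{|z|=r}\Re\psi_i$, including the same extremal functions for sharpness. Your added remarks on the minimum principle and the strict monotonicity of $\psi_i(-r)$ only make explicit what the paper leaves implicit.
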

	%
	We obtain the following result proved by Obradovi\'{c} and Ponnusamy \cite{obPonnu}:
	\begin{remark}
		Let $g \in \mathcal{S}^*(\gamma)$ and $h\in \mathcal{S}^*(\tau)$. Then the function $F$ defined in \eqref{product} is starlike of order $\gamma_0$ in the disk
		$$|z|<\frac{1-\gamma_0}{\gamma_0 +3-2(\gamma+\tau)}.$$
	\end{remark}
	
	\begin{remark}
		Note that the identity function $z\in \mathcal{S}^{*}(\psi)$. Thus if we choose $g(z)=z$ (or $h(z)=z$) in \eqref{product}, then the problem reduces to obtaining the $\mathcal{M}(\beta)$-radius (or $\mathcal{S}^{*}(\gamma)$-radius) of the class $\mathcal{S}^*(\psi_2)$ (or $\mathcal{S}^*(\psi_1)$). It is also evident that the conditions given in \eqref{p} establish the inclusion relations $\mathcal{S}^{*}(\psi)\subseteq\mathcal{M}(\psi(1))$ and $\mathcal{S}^{*}(\psi)\subseteq\mathcal{S}^{*}(\psi(-1))$, respectively.
	\end{remark}
	
	In the following, we extend the results of the Bulboac\u{a} and Tuneski~\cite{Bulboca-2003}:
	\begin{theorem}
		Let $h$ be analytic with $h(0)=0$, $h'(0)\neq0$. Suppose that $h$ satisfies
		\begin{equation*}
		\Re\left(1+\frac{zh''(z)}{h(z)}\right) \geq -\frac{1}{2}
		\end{equation*}
		and
		\begin{equation}\label{p}
		\frac{1}{z}\int_{0}^{z}h(t)dt \prec \frac{\psi(z)-1}{\psi(z)}.
		\end{equation}
		If  $f\in \mathcal{A}$, then
		\begin{equation*}
		\frac{f(z)f''(z)}{(f'(z))^2} \prec h(z) \quad\text{implies} \quad f\in \mathcal{S}^{*}(\psi).
		\end{equation*}
	\end{theorem}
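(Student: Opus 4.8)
The plan is to convert the starlikeness statement $zf'/f\prec\psi$ into a first-order differential subordination for an auxiliary function and then feed it into the averaging-operator machinery of the theory of differential subordinations. First I would put $p(z):=zf'(z)/f(z)$ (analytic near the origin, $p(0)=1$) and introduce
\begin{equation*}
P(z):=1-\frac{1}{p(z)}=1-\frac{f(z)}{zf'(z)},\qquad P(0)=0 .
\end{equation*}
Writing $f=zf'(1-P)$ and differentiating gives $zf''/f'=(P+zP')/(1-P)$, so that
\begin{equation*}
\frac{f(z)f''(z)}{(f'(z))^{2}}=\frac{zf''(z)}{f'(z)}\cdot\frac{f(z)}{zf'(z)}=P(z)+zP'(z),
\end{equation*}
and the hypothesis $ff''/(f')^{2}\prec h$ becomes $P(z)+zP'(z)\prec h(z)$. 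Dually, since $\psi$ is univalent with $\Re\psi>0$, the map $w\mapsto 1-1/w$ is univalent on $\psi(\mathbb{D})$, so $p\prec\psi$ if and only if $P\prec(\psi-1)/\psi$; hence it is enough to prove $P\prec(\psi-1)/\psi$.

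Next I would analyse $P+zP'\prec h$ by the averaging operator. Put $H(z):=\frac{1}{z}\int_{0}^{z}h(t)\,dt$, so that $H(0)=0$, $H'(0)=h'(0)/2\neq0$ and $H(z)+zH'(z)=h(z)$. The first hypothesis on $h$ is exactly what guarantees that this averaged function $H$ is convex (univalent) in $\mathbb{D}$: this is the criterion that powers the corresponding result of Bulboac\u{a} and Tuneski \cite{Bulboca-2003}, and it is the analogue, after averaging, of the convexity requirement $\Re(1+zh''/h')>0$, with $-1/2$ the threshold preserved under $h\mapsto H$. Granting that $H$ is convex, the subordination $P+zP'\prec H+zH'$ together with $P(0)=0=H(0)$ forces $P\prec H$, $H$ being the best dominant; this is the Hallenbeck--Ruscheweyh / Miller--Mocanu result for the operator $\beta\mapsto\beta+z\beta'$ (see \cite[Theorem~3.1b]{subbook}), in the form requiring only the averaged function, not $h$ itself, to be convex. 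Finally, the second hypothesis on $h$ reads precisely $H\prec(\psi-1)/\psi$, so transitivity of subordination yields $P\prec H\prec(\psi-1)/\psi$; by the first step this is $zf'/f\prec\psi$, i.e.\ $f\in\mathcal{S}^{*}(\psi)$.

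The hard part will be the middle step: confirming that the hypothesis on $h$ really does make the averaged function $H$ convex, and that this convexity alone is enough to transfer the subordination back from $P+zP'$ to $P$. Absent a verbatim citation, one would reconstruct it with the admissibility apparatus of \cite{subbook}: first derive $\Re(1+zH''/H')>0$ from the $-1/2$ bound on $h$ (the constant $-1/2$ being exactly what survives the passage to the average), and then run the standard argument that the convex solution of $q+zq'=h$ is the best dominant. A minor technicality to clear up beforehand is that $P=1-f/(zf')$ is analytic throughout $\mathbb{D}$: a pole of $P$ would be a zero of $f(z)/z$, which is incompatible with the boundedness imposed by $P+zP'\prec h$, so it is disposed of by the usual maximal-disc argument once the hypothesis on $ff''/(f')^{2}$ is read as presupposing that the left-hand side is analytic in $\mathbb{D}$.
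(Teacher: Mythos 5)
Your proposal is correct and follows essentially the same route as the paper: both reduce the problem to the identity $f f''/(f')^2 = P(z)+zP'(z)$ with $P(z)=1-f(z)/(zf'(z))$, pass through the averaging operator $h\mapsto \frac{1}{z}\int_0^z h(t)\,dt$ under the hypothesis $\Re\left(1+zh''/h'\right)\geq -1/2$, and conclude by transitivity with the subordination hypothesis on $(\psi-1)/\psi = 1-1/\psi$. The only difference is that the paper simply cites the Bulboac\u{a}--Tuneski theorem for the averaging step, whereas you reconstruct it from the Hallenbeck--Ruscheweyh machinery; this is a matter of citation versus derivation, not of method.
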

	\begin{proof}
		Using the result \cite[Theorem~3.1, p.~3]{Bulboca-2003}, we see that ${f(z)f''(z)}/{(f'(z))^2} \prec h(z)$ implies
		\begin{equation*}
		\frac{1}{z}\int_{0}^{z}\left(1-\left(\frac{f(t)}{f'(t)}\right)'\right)dt = 1-\frac{f(z)}{zf'(z)} \prec \frac{1}{z}\int_{0}^{z}h(t)dt.
		\end{equation*}	
		From the above subordination, we have
		\begin{equation*}
		\frac{f(z)}{zf'(z)} \prec	1-\frac{1}{z}\int_{0}^{z}h(t)dt.
		\end{equation*}
		Now to prove that $f\in \mathcal{S}^{*}(\psi)$, it suffices to consider
		\begin{equation*}
		1-\frac{1}{z}\int_{0}^{z}h(t)dt \prec \frac{1}{\psi(z)},
		\end{equation*}
		which is equivalent to \eqref{p}. This completes the proof. \qed
	\end{proof}	
	
	\section{\bf{Distortion theorem }}\label{sec-3}
	Ma-Minda \cite{minda94} proved the distortion theorem for the class $\mathcal{S}^*(\psi)$ with some restriction on $\psi$, namely $|\psi(z)|$ attains its maximum and minimum value  respectively at $z=r$ and $z=-r$, see eq.~\eqref{minda-cond}. Now what  if $\psi$ does not satisfy the condition \eqref{minda-cond} and why the condition \eqref{minda-cond} is so important? To answer this, we first need to recall the following result:
	\begin{lemma}\emph{(\cite{minda94})}\label{grth}
		Let $f\in \mathcal{S}^*(\psi)$ and $|z_0|=r<1$. Then $-f_0(-r)\leq|f(z_0)|\leq f_0(r).$
		Equality holds for some $z_0\neq0$ if and only if $f$ is a rotation of $f_0$, where $zf_0(z)/f_0(z)=\psi(z)$ such that
		\begin{equation}\label{int-rep}
		f_0(z)=z\exp{\int_{0}^{z}\frac{\psi(t)-1}{t}dt}.
		\end{equation}
		
	\end{lemma}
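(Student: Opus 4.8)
The plan is to use the subordination defining $\mathcal{S}^*(\psi)$ and integrate along radii. Since $zf'(z)/f(z)\prec\psi(z)$ and $\psi$ is univalent, we may write $zf'(z)/f(z)=\psi(\omega(z))$ for a Schwarz function $\omega$. Fix $z_0=re^{i\theta}$ with $0<r<1$. Along the segment $\rho\mapsto\rho e^{i\theta}$ one has
$$\frac{d}{d\rho}\log\left|\frac{f(\rho e^{i\theta})}{\rho e^{i\theta}}\right|=\Re\left(\frac{e^{i\theta}f'(\rho e^{i\theta})}{f(\rho e^{i\theta})}-\frac{1}{\rho}\right)=\frac{\Re\psi(\omega(\rho e^{i\theta}))-1}{\rho},$$
and since $f'(0)=1$, integration from $0$ to $r$ gives $\log|f(z_0)/z_0|=\int_0^r\rho^{-1}\big(\Re\psi(\omega(\rho e^{i\theta}))-1\big)\,d\rho$. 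The same identity applied to $f_0$ (for which $\omega(z)=z$) along the rays $\theta=0$ and $\theta=\pi$, using that $\psi$ has real coefficients so $\psi(\pm\rho)\in\mathbb{R}$, yields
$$\log\frac{f_0(r)}{r}=\int_0^r\frac{\psi(\rho)-1}{\rho}\,d\rho,\qquad \log\frac{-f_0(-r)}{r}=\int_0^r\frac{\psi(-\rho)-1}{\rho}\,d\rho.$$

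The heart of the argument is the estimate of $\Re\psi(\omega(\rho e^{i\theta}))$. As $\omega$ is a Schwarz function, $|\omega(\rho e^{i\theta})|\le\rho$, and since $\Re\psi$ is harmonic its extrema over $\{|\zeta|\le\rho\}$ are attained on the circle $\{|\zeta|=\rho\}$; using $\psi(0)=1$, $\psi'(0)>0$ and the symmetry of $\psi(\mathbb{D})$ about the real axis one obtains $\psi(-\rho)\le\Re\psi(\omega(\rho e^{i\theta}))\le\psi(\rho)$ for $0<\rho<1$. Inserting this into the integral representation and comparing with the formulas for $f_0$ immediately gives $-f_0(-r)\le|f(z_0)|\le f_0(r)$. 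I expect this last estimate — namely that the extrema of $\Re\psi$ on $\{|\zeta|=\rho\}$ really occur at $\zeta=\mp\rho$ — to be the delicate point: it is exactly the phenomenon underlying the hypothesis \eqref{minda-cond}, which is imposed so that the analogue holds for $|\psi|$ (needed for the finer distortion bound). When the pointwise inequality is not transparent one can instead note that $\log(f_0(z)/z)$ is convex (equivalently $\psi(z)-1$ is starlike), deduce $\log(f(z)/z)\prec\log(f_0(z)/z)$, and bound the real part of the convex function $\log(f_0(z)/z)$ over $\{|\zeta|\le r\}$.

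For the equality assertion I would run the chain of inequalities in reverse. If $|f(z_0)|=f_0(r)$ with $z_0=re^{i\theta}\neq0$, then $\Re\psi(\omega(\rho e^{i\theta}))=\psi(\rho)$ for every $\rho\in(0,r)$ (the integrand $\rho^{-1}(\psi(\rho)-\Re\psi(\omega(\rho e^{i\theta})))$ is nonnegative and continuous with vanishing integral), and strict extremality of $\Re\psi$ at the point $\rho$ of the circle $\{|\zeta|=\rho\}$ forces $\omega(\rho e^{i\theta})=\rho$ throughout $(0,r)$; equality in the Schwarz lemma then yields $\omega(z)=e^{-i\theta}z$, so $zf'(z)/f(z)=\psi(e^{-i\theta}z)$, and integrating, $f(z)=e^{i\theta}f_0(e^{-i\theta}z)$ is a rotation of $f_0$. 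The case $|f(z_0)|=-f_0(-r)$ is handled identically with $\rho$ replaced by $-\rho$, and conversely any rotation of $f_0$ clearly attains the bound at the appropriate $z_0$. The main obstacle throughout is passing from the value $\psi(\omega(\rho e^{i\theta}))$ to the real boundary points $\psi(\pm\rho)$.
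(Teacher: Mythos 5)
The lemma is quoted from Ma--Minda, so the paper contains no proof of its own to compare against; measured against the source, your fallback remark is the right argument but your main line is not. The inequality you lean on, $\psi(-\rho)\le\Re\psi(\zeta)\le\psi(\rho)$ for $|\zeta|\le\rho$, is false for general Ma--Minda functions: symmetry of $\psi(\mathbb{D})$ about the real axis only makes $\theta=0,\pi$ critical points of $\theta\mapsto\Re\psi(\rho e^{i\theta})$, not global extrema. The paper's own example $\psi(z)=1+ze^{z}$ is a counterexample: at $z=e^{2\pi i/3}$ one computes $\Re(ze^{z})\approx-0.597$, which is below $-e^{-1}\approx-0.368$, so $\min_{|z|=1}\Re\psi(z)<\psi(-1)$. (This is the same phenomenon as the failure of condition \eqref{minda-cond} for this $\psi$, discussed at length in Section 4; note that \eqref{minda-cond} concerns $|\psi|$ and is needed for the distortion theorem, not for this growth lemma, which holds for every Ma--Minda $\psi$.) Consequently both your derivation of the two-sided bound and your equality argument, which invokes ``strict extremality of $\Re\psi$ at $\rho$,'' rest on a step that fails.

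The correct mechanism is the one you relegate to a one-sentence escape route, and it uses a hypothesis your sketch never isolates: Ma--Minda assume $\psi(\mathbb{D})$ is starlike with respect to $\psi(0)=1$, so $\psi(z)-1$ is starlike and $q(z):=\log(f_0(z)/z)=\int_0^z(\psi(t)-1)t^{-1}\,dt$ is convex univalent with real coefficients. Then $\log(f(z)/z)=q(\omega(z))$, and for such a $q$ one does have $q(-\rho)\le\Re q(\zeta)\le q(\rho)$ on $|\zeta|\le\rho$: the image of the disk of radius $\rho$ is convex and symmetric about $\mathbb{R}$, so an extremal point of the real part can be reflected and averaged to land on the real axis, and the real points of the image are exactly the interval $[q(-\rho),q(\rho)]$ by univalence. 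Exponentiating gives the growth bounds. Equality then forces $\Re q(\omega(z_0))$ to equal $q(\pm r)$, which places $\omega(z_0)$ on the circle $|\zeta|=r=|z_0|$, so Schwarz rigidity gives $\omega(z)=e^{i\alpha}z$ and hence $f(z)=e^{-i\alpha}f_0(e^{i\alpha}z)$, a rotation of $f_0$. If you rewrite the proof with $q$ in place of $\psi$ throughout --- including the equality case --- it becomes correct; as written, the primary argument has a genuine gap.
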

	
	We see that a  Ma-Minda starlike function, in general, need not satisfy the condition~\eqref{minda-cond}.  To examine the same, let us consider two different  Ma-Minda starlike functions, namely
	$\psi_1(z):=z+\sqrt{1+z^2}$ and $\psi_2(z):=1+ze^z.$
	The unit disk images of $\psi_1$ and $\psi_2$ are displayed in figure~\ref{fg1} and figure~\ref{fg2}.

		\begin{figure}[h]
		\begin{tabular}{c}
\includegraphics[scale=0.30]{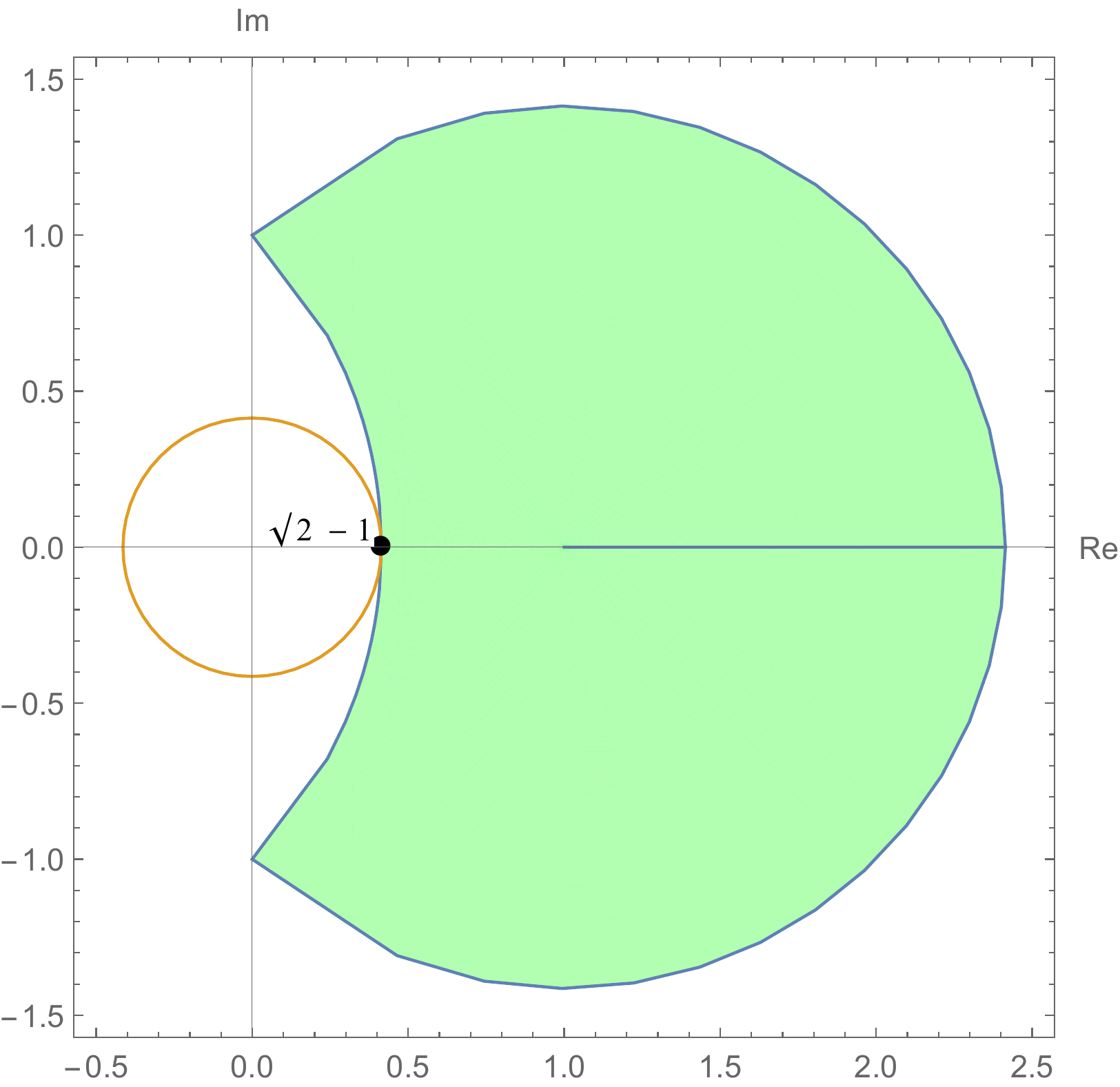}
		\end{tabular}
		\begin{tabular}{l}
	\parbox{0.5\linewidth}{
	{\bf \underline{Legend for figure~\ref{fg3}} -}\\~\\
	Let $\underset{|z|=r}{\min}|\psi_2(z)|=:\gamma_i(r),$ where\\
	$z=r_i e^{i\theta},\; 0 \leq \theta\leq\pi$,\\
	then from table~\ref{table}, we have\\
	$\gamma_1(1)= 0.372412,  $\\
	$\gamma_2(4/5)= 0.527912, $ \\
	$\gamma_3(2/3)= 0.611553,$ \\
	$\gamma_4(1/2)= 0.693287,$ \\
	$\gamma_5(r)= 1-re^{-r},$
	where $r\leq(3-\sqrt{5})/2.$
	}
		\end{tabular}
{\caption{$\psi_1(z):=z+\sqrt{1+z^2}$}\label{fg1}}
	\end{figure}
	
\begin{figure}[h]
	{\includegraphics[scale=0.195]{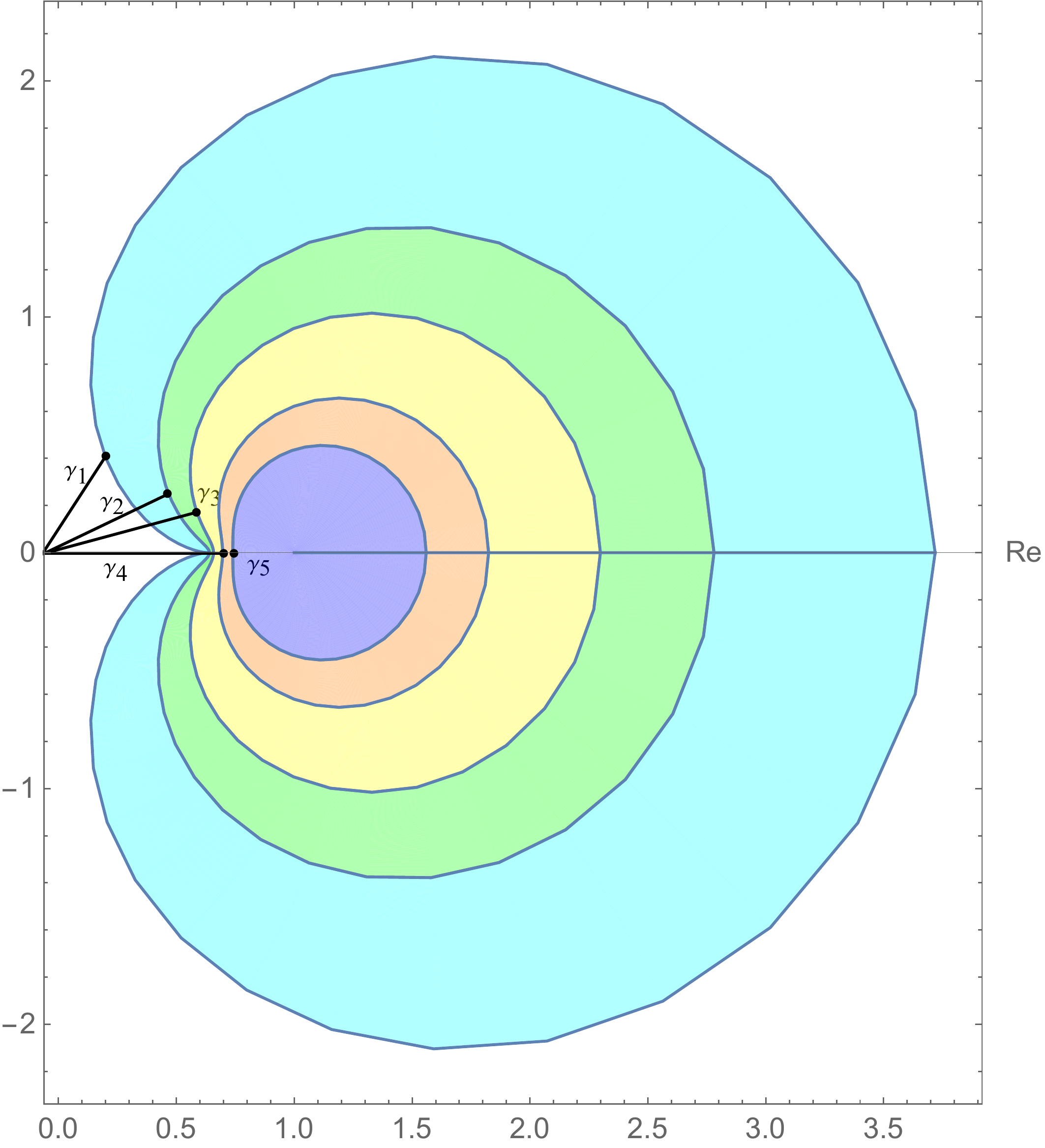}}{\caption{$\psi_2(z):=1+ze^z$}\label{fg2}}
\end{figure}
\begin{figure}[h]
	{\includegraphics[scale=0.55]{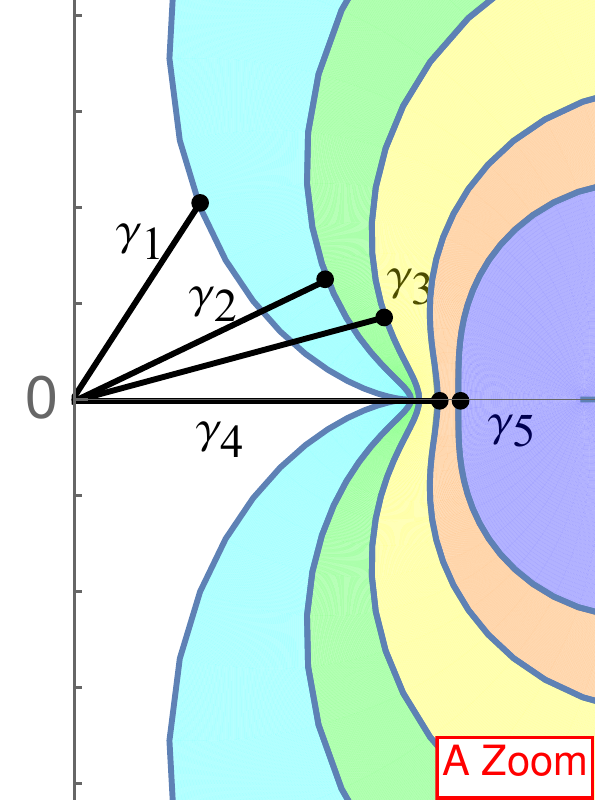}}{\caption{A zoom of figure~\ref{fg2}\label{fg3}}}	
\end{figure}	
	We know that the radius of a circle, centered at origin and touching only the boundary points of a image domain of a complex function, yields the optimal values of the modulus of the function. For example, see figure~\ref{fg1} to locate the lower bound of the modulus for a crescent function. Therefore it  is evident from figure~\ref{fg1}   that both the bounds $\psi_1(-r)$ and $\psi_1(r)$ of $|\psi_1|$ are attained on the real line and we have $\psi_1(-r)\leq|\psi_1(z)|\leq \psi_1(r)$ for each $|z|=r$. Whereas, from figure~\ref{fg2}, we see that only the upper bound $\psi_2(r)$ of $|\psi_2|$  is attained  on the real line and $|\psi_2(z)|\leq \psi_2(r)$ for each $|z|=r$. Although both $\psi_1$ and $\psi_2$ are Ma-Minda functions but the distortion theorem of Ma-Minda \cite[theorem~2, p.~162]{minda94} does not accommodate the function $\psi_2$, as the lower bound for $|\psi_2(z)|$ is not attained on the real line for all $|z|=r> (3-\sqrt{5})/2$, see figure~\ref{fg3}. To overcome this limitation, we modify the distortion theorem, wherein we theoretically assume the modulus bounds of the function and obtain a more general result. Thus the Ma-Minda functions, for which modulus bounds are not attained on the real line but could be estimated,  can now be entertained  for distortion theorem using the following result:
	\begin{theorem}[Modified Distortion Theorem]\label{distthm}
		Let $\psi$ be a Ma-Minda function. Assume that $\underset{|z|=r}{\min}|\psi(z)|=|\psi(z_1)|$ and $\underset{|z|=r}{\max}|\psi(z)|=|\psi(z_2)|$, where $z_1=re^{i\theta_1}$ and $z_2=re^{i\theta_2}$ for some $\theta_1, \theta_2\in [0,\pi]$.
		Let $f\in \mathcal{S}^*(\psi)$ and $|z_0|=r<1$. Then
		\begin{equation}\label{disteq}
		|\psi(z_1)|\left(\frac{-f_0(-r)}{r}\right)\leq|f'(z_0)|\leq \left(\frac{f_0(r)}{r}\right)|\psi(z_2)|.
		\end{equation}
	\end{theorem}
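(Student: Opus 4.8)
The plan is to follow the same structure as the proof of the Ma-Minda distortion theorem, but replace the real-axis evaluations with the assumed extremal points $z_1$ and $z_2$. First I would start from the definitional subordination: since $f\in\mathcal{S}^*(\psi)$, we have $zf'(z)/f(z) = \psi(\omega(z))$ for some Schwarz function $\omega$, so that
\begin{equation*}
|f'(z_0)| = \left|\frac{f(z_0)}{z_0}\right|\,|\psi(\omega(z_0))|.
\end{equation*}
The subordination principle gives $\psi(\omega(\mathbb{D}_r))\subseteq\psi(\mathbb{D}_r)$ where $\mathbb{D}_r=\{|z|<r\}$ (using that $\psi$ is univalent and $\omega$ maps $\mathbb{D}_r$ into itself), hence
\begin{equation*}
|\psi(z_1)| = \min_{|z|=r}|\psi(z)| \le |\psi(\omega(z_0))| \le \max_{|z|=r}|\psi(z)| = |\psi(z_2)|,
\end{equation*}
where I should be slightly careful to take $\min$/$\max$ over the closed disk $|z|\le r$, which by the maximum/minimum modulus considerations for the univalent $\psi$ reduces to the circle $|z|=r$ for the max, and is handled as in Ma-Minda for the min.

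Next I would bring in Lemma~\ref{grth}: for $|z_0|=r$ it gives $-f_0(-r)\le |f(z_0)|\le f_0(r)$, and therefore
\begin{equation*}
\frac{-f_0(-r)}{r} \le \left|\frac{f(z_0)}{z_0}\right| \le \frac{f_0(r)}{r}.
\end{equation*}
Combining this chain of inequalities with the bound on $|\psi(\omega(z_0))|$ multiplicatively — all quantities involved being nonnegative — yields exactly
\begin{equation*}
|\psi(z_1)|\left(\frac{-f_0(-r)}{r}\right) \le |f'(z_0)| \le \left(\frac{f_0(r)}{r}\right)|\psi(z_2)|,
\end{equation*}
which is \eqref{disteq}. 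The integral representation \eqref{int-rep} for $f_0$ is available from the lemma and need not be re-derived.

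The main subtlety, and the only place requiring care, is justifying $\min_{|z|=r}|\psi(z)| \le |\psi(\omega(z_0))|$. Since $\omega$ is a Schwarz function, $|\omega(z_0)| \le |z_0| = r$, so $\omega(z_0)$ lies in the closed disk $\overline{\mathbb{D}_r}$; thus I actually need $|\psi(\omega(z_0))| \ge \min_{|w|\le r}|\psi(w)|$, and then observe that because $\psi$ is analytic and nonvanishing on $\overline{\mathbb{D}_r}$ (as $\Re\psi>0$), the minimum modulus principle forces $\min_{|w|\le r}|\psi(w)| = \min_{|w|=r}|\psi(w)| = |\psi(z_1)|$. The upper bound is the ordinary maximum modulus principle. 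This is precisely the hypothesis-driven substitute for the real-line condition \eqref{minda-cond}: we are not assuming where the extrema occur, only naming them $z_1, z_2$, so the argument goes through verbatim once these two modulus-principle facts are noted. No further obstacle is expected; the remainder is the same bookkeeping as in \cite{minda94}.
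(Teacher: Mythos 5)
Your proposal is correct and follows essentially the same route as the paper: factor $|f'(z_0)|=|f(z_0)/z_0|\,|p(z_0)|$ with $p=zf'/f\prec\psi$, bound $|f(z_0)/z_0|$ by Lemma~\ref{grth} and $|p(z_0)|$ by the maximum/minimum modulus principles on $|z|\le r$, then multiply. Your explicit justification of the lower bound via nonvanishing of $\psi$ (from $\Re\psi>0$) is a point the paper glosses over with the phrase ``Maximum-Minimum principle of modulus,'' so no gap remains.
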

	\begin{proof}
		Let $p(z)=zf'(z)/f(z)$. Then $f\in \mathcal{S}^*(\psi)$	 if and only if $p(z)\prec \psi(z)$. Using a result \cite[Theorem~1, p.161]{minda94}, we have
		\begin{equation}\label{dist-equation}
		\frac{f(z)}{z}\prec \frac{f_0(z)}{z},
		\end{equation}
		where $f_0$ is given by \eqref{int-rep}.
		Now using Maximum-Minimum principle of modulus, \eqref{dist-equation} and by Lemma \ref{grth}, $-f_0(-r)/r\leq|f(z_0)/z|\leq f_0(r)/r$, we easily obtain for $|z_0|=r$
		\begin{align*}
		|\psi(z_1)|\left(\frac{-f_0(-r)}{r}\right)&=\min_{|z|=r}|\psi(z)|\min_{|z|=r}\left|\frac{f_0(z)}{z}\right|\\
		&\leq\left|p(z_0)\frac{f(z_0)}{z_0}\right|\\
		&=|f'(z_0)|\leq\max_{|z|=r}|\psi(z)|\max_{|z|=r}\left|\frac{f_0(z)}{z}\right|\\
		&= \left(\frac{f_0(r)}{r}\right)|\psi(z_2)|,
		\end{align*}
		that is,
		\begin{equation*}
		|\psi(z_1)|\left(\frac{-f_0(-r)}{r}\right)\leq|f'(z_0)|\leq\left(\frac{f_0(r)}{r}\right)|\psi(z_2)|,
		\end{equation*}
		where $z_1$ and $z_2$ are as defined in the hypothesis. Hence the result.\qed
	\end{proof}

	To illustrate Theorem~~\ref{distthm}, we consider the function $\psi(z)=1+ze^z$. Then we have the following expression for its modulus:
	\begin{equation}\label{cardioid-dist}
	|\psi(z)|=\sqrt{1+ r e^{r\cos\theta}(r e^{r\cos\theta} + 2\cos(\theta+r\sin\theta))}.
	\end{equation}
	Using equation~\eqref{cardioid-dist} and Theorem~\ref{distthm}, we obtain the following table~\ref{table}, providing the minimum for various choices of $r$.
	\begin{table}[h]
		\caption{The lower bounds for $ |1+ze^z| $ for different choices of $r=|z|$. }\label{table}
		\centering	
		\begin{tabular}{|c|c|c|c|}
			\hline
			 $r$ & $0\leq \theta_1 \leq \pi$ & $ |\psi(re^{i\theta_1 })|$ & $m(r,\theta_1 )=|\psi(re^{i\theta_1 })|(-f_0(-r)/r)$    \\
			\hline
			1 &  1.88438 & 0.372412  &0.197923 \\
			\hline
			 4/5 &  2.01859 & 0.527912 & 0.304374\\
			\hline
			2/3 & 2.17677 & 0.611553 & 0.375966\\
			\hline
				1/2 & 2.58169 & 0.693287 & 0.467769 \\
			\hline
			$r\leq (3-\sqrt{5})/2$ & $\pi$ & $\psi_2(-r)$ &$f'_0(-r)$ \\
			\hline
		\end{tabular}
	\end{table}	
	
	Now using Theorem~\ref{distthm}, we obtain the following distortion theorem for the class $\mathcal{S}^{*}(1+ze^z)$:
	\begin{example}
		Let $\psi(z)=1+ze^z$ and $\underset{|z|=r}{\min}|\psi(z)|=|\psi(z_1)|$, where $z_1=re^{i\theta_1}$ for some $\theta_1\in [0,\pi]$.
		Let $f\in \mathcal{S}^*(\psi)$ and $|z_0|=r<1$. Then
		\begin{equation*}
		m(r,\theta_1)\leq|f'(z_0)|\leq f'_0(r), \quad \left(r>\tfrac{3-\sqrt{5}}{2}\right)
		\end{equation*}
		and
		\begin{equation*}
		f'_0(-r)\leq|f'(z_0)|\leq f'_0(r), \quad \left(r\leq\tfrac{3-\sqrt{5}}{2}\right),
		\end{equation*}
		where $f_0(z)= z \exp(e^z-1)$ and $m(r,\theta_1)$ is provided in table~\ref{table} for some specific values of $r$.
	\end{example}
	
	\begin{remark}
		In Theorem \ref{distthm}, if we assume that $\theta_1=\pi$ and $\theta_2=0$, then extremes in equation \eqref{disteq} simplifies to $f'_0(-r)$ and $f'_0(r)$, respectively since $zf'_0(z)/f_0(z)=\psi(z)$. Thus the extremes in the equation \eqref{disteq} are in terms of $r$ alone and also lead to the sharp bounds.
		Consequently, we obtain the following distortion theorem of Ma-Minda \cite{minda94} as a special case of  Theorem~\ref{distthm}:\\
		
		{\it	Let $\min_{|z|=r}|\psi(z)|=\psi(-r)$ and $\max_{|z|=r}|\psi(z)|=\psi(r)$. If $f\in \mathcal{S}^*(\psi)$ and $|z_0|=r<1$. Then
			\begin{equation*}
			f'_0(-r)=\psi(-r)\frac{f_0(-r)}{-r}\leq|f'(z_0)|\leq \frac{f_0(r)}{r}\psi(r)=f'_0(r).
			\end{equation*}
			Equality holds for some $z_0\neq0$ if and only if $f$ is a rotation of $f_0$.}	
	\end{remark}

	\section{Bohr-Phenomenon for functions in $S_{f}(\psi)$}\label{sec-4}
	
	Note that {``the Bohr radius of the class $\mathcal{X}$ is at least $r_{x}$'',} this holds for every result in this section. In general, Bohr radius is estimated for a specific class provided the sharp coefficients bounds of the functions in that class are known. For instance, consider the class of starlike univalent functions, where we have the sharp coefficient bounds: $|a_n|\leq n$. However, for most of the Ma-Minda subclasses, the better coefficients bounds are yet not known. Hence, we encounter the following problem, especially in context of Ma-minda classes, which we deal here to a certain extent:\\
	{\bf Problem: {\it If coefficients bounds are not known, how one can find a good lower estimate for the Bohr radius of a given class?}}
	
	To readily understand the above problem, consider the class $\mathcal{S}^{*}(1+ze^z)$, where the sharp coefficients bounds for functions in this class are unknown. In this situation, how one can find the Bohr radius for this class or is there any way out with the lower bounds all alone? Here below we state Theorem~\ref{bohr}, where we find a solution for this problem. Note that the Bohr radius $3-2\sqrt{2}\approx 0.1713$ for the class $\mathcal{S}^{*}$ serves as a lower bound for the class $S_{f}(\psi)$ and is also a special case of Theorem~\ref{bohr}.
	
	Let $\mathbb{B}(0,r):=\{z\in\mathbb{C}: |z|<r  \},$  $g(z)=\sum_{k=1}^{\infty}b_k z^k,$   $\mathcal{S}^*(\psi)$
	and   $S_{f}(\psi)$ as defined in  \eqref{mindaclass} and \eqref{bohrclass} respectively.
	For any $g\in S_{f}(\psi),$ we find the radius $r_b$ so that $S_{f}(\psi)$ obey the following Bohr-phenomenon:
	\begin{equation}\label{bohrphenomenon}
	\sum_{k=1}^{\infty}|b_k|r^k \leq d(f(0),\partial\Omega)\quad \text{for}\quad|z|=r\leq r_b,
	\end{equation}
	where  $d(f(0),\partial\Omega)$ denotes the Euclidean distance between $f(0)$ and the boundary of $\Omega=f(\mathbb{D})$. Now we prove our main result:

	\begin{theorem}\label{bohr}
		Let $r_{*}$ be the Koebe-radius for the class $\mathcal{S}^*(\psi),$  $f_0(z)$ be given by the equation~\eqref{int-rep} and $g(z)=\sum_{k=1}^{\infty}b_k z^k \in S_{f}(\psi)$. Assume  $f_0(z)=z+\sum_{n=2}^{\infty}t_n z^n$ and $\hat{f}_0(r)=r+\sum_{n=2}^{\infty}|t_n|r^n$.
		Then   $S_{f}(\psi)$ satisfies the Bohr-phenomenon
		\begin{equation}\label{compare}
		\sum_{k=1}^{\infty}|b_k|r^k \leq d(f(0),\partial\Omega),\quad \text{for}\; |z|=r\leq r_b,
		\end{equation}
		where $r_b=\min\{r_0, 1/3 \}$, $\Omega=f(\mathbb{D})$ and $r_0$ is the least positive root of the equation
		$$\hat{f}_0(r)=r_{*}.$$
	\end{theorem}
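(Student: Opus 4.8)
The plan is to peel \eqref{compare} down to a single coefficient inequality and then read off the conclusion from the geometry of the Koebe disk. Since $g(0)=f(0)=0$, the quantity $d(f(0),\partial\Omega)$ is precisely the radius of the largest disk centred at the origin contained in $\Omega=f(\mathbb{D})$; consequently $d(f(0),\partial\Omega)\geq r_{*}$, where $r_{*}=\inf_{f\in\mathcal{S}^{*}(\psi)}d(0,\partial f(\mathbb{D}))$ is the Koebe radius of $\mathcal{S}^{*}(\psi)$ (equivalently, this is the lower growth estimate of Lemma~\ref{grth}). Next, $\hat f_0(r)=r+\sum_{n\geq2}|t_n|r^n$ has non-negative Taylor coefficients and leading term $r$, hence it is continuous and strictly increasing on $[0,1)$ with $\hat f_0(0)=0$; moreover $f_0$ is normalized univalent, so $|t_n|\leq n$ and $\hat f_0(r)\leq r/(1-r)^2<\infty$ for $r<1$, while $r_{*}<1$. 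Therefore $\hat f_0(r)=r_{*}$ has a least positive root $r_0$, with $\hat f_0(r)\leq r_{*}$ for all $r\leq r_0$. With these remarks in hand, the theorem reduces to proving
\begin{equation*}
\sum_{k=1}^{\infty}|b_k|r^k\ \leq\ \hat f_0(r)\qquad\text{for }|z|=r\leq\tfrac13 .
\end{equation*}

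The tool I would use is the following Bohr-type subordination lemma (in the spirit of \cite{bhowmik2018}): if $\phi(z)=\sum_{n\geq0}c_nz^n\prec\Phi(z)=\sum_{n\geq0}C_nz^n$ in $\mathbb{D}$, then $\sum_{n\geq0}|c_n|r^n\leq\sum_{n\geq0}|C_n|r^n$ whenever $r\leq\tfrac13$. I would establish it quickly: write $\phi=\Phi\circ\omega$ with $\omega$ a Schwarz function; by the Schwarz lemma $\omega(z)/z$ is analytic and bounded by $1$ in $\mathbb{D}$, so the classical Bohr inequality applied to $\omega(z)/z$ gives $\sum_{k\geq1}|\omega_k|r^k\leq r$ for $r\leq\tfrac13$; submultiplicativity of the majorising series then yields $\sum_{k}\bigl|[\omega^n]_k\bigr|r^k\leq\bigl(\sum_k|\omega_k|r^k\bigr)^n\leq r^n$, and expanding $\phi(z)=\sum_nC_n\,\omega(z)^n$ gives $\sum_k|c_k|r^k\leq\sum_n|C_n|\sum_k\bigl|[\omega^n]_k\bigr|r^k\leq\sum_n|C_n|r^n$. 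The constant $\tfrac13$ is forced exactly at this step.

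I would then apply the lemma twice. Applied to the subordination $g\prec f$, it gives, with $f(z)=z+\sum_{n\geq2}a_nz^n$, that $\sum_{k\geq1}|b_k|r^k\leq\sum_{n\geq1}|a_n|r^n$ for $r\leq\tfrac13$. Applied to the Ma-Minda subordination $f(z)/z\prec f_0(z)/z$ — which is exactly \eqref{dist-equation}, borrowed from \cite[Theorem~1]{minda94} — rewritten as $1+\sum_{n\geq2}a_nz^{n-1}\prec1+\sum_{n\geq2}t_nz^{n-1}$, it gives $\sum_{n\geq2}|a_n|r^{n-1}\leq\sum_{n\geq2}|t_n|r^{n-1}$; multiplying by $r$ and adding the common linear term $r=|a_1|r=|t_1|r$ yields $\sum_{n\geq1}|a_n|r^n\leq r+\sum_{n\geq2}|t_n|r^n=\hat f_0(r)$ for $r\leq\tfrac13$. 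Chaining these two inequalities proves the displayed estimate; then for $r\leq r_b=\min\{r_0,\tfrac13\}$ we conclude $\sum_{k\geq1}|b_k|r^k\leq\hat f_0(r)\leq\hat f_0(r_0)=r_{*}\leq d(f(0),\partial\Omega)$, which is \eqref{compare}.

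The only genuine obstacle is the Bohr-type subordination lemma — that is, securing the sharp radius $\tfrac13$ in the passage from a subordination to the associated $\ell^{1}$-majorant inequality; everything else (the reduction to the coefficient estimate, the monotonicity and finiteness of $\hat f_0$, the existence of the root $r_0$, and the bound $d(f(0),\partial\Omega)\geq r_{*}$) is routine. A secondary point deserving a line is the well-posedness of $r_0$: it uses that $f_0$ is normalized univalent — so $\hat f_0$ is finite and strictly increasing on $[0,1)$ — and that $r_{*}<1$, both valid since Ma-Minda starlike functions are normalized univalent and are not the identity map.
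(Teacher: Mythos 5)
Your proposal is correct and follows essentially the same route as the paper: bound $d(f(0),\partial\Omega)$ from below by the Koebe radius $r_{*}=-f_0(-1)$, apply the Bohr-type subordination lemma (Lemma~1 of \cite{bhowmik2018}) to both $g\prec f$ and the Ma-Minda subordination $f(z)/z\prec f_0(z)/z$ to obtain $\sum_{k\geq1}|b_k|r^k\leq\hat{f}_0(r)$ for $r\leq 1/3$, and finish with the least positive root $r_0$ of $\hat{f}_0(r)=r_{*}$. The only difference is that you re-prove the subordination lemma from the Schwarz lemma and the classical Bohr inequality instead of citing it, which the paper does not do.
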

	\begin{proof}
		Since  $g\in S_{f}(\psi)$, we have $g\prec f$ for a fixed $f\in \mathcal{S}^*(\psi)$ . By letting $r$ tends to $1$ in Lemma~\ref{grth}, we obtain the Koebe-radius $r_{*}=-f_0(-1)$. Therefore
		$\mathbb{B}(0,r_{*}) \subset f(\mathbb{D})$, which implies $  r_{*}\leq d(0,\partial\Omega)=|f(z)|$ for $|z|=1$. Also using \cite[Theorem~1, p.161]{minda94}, we have
		\begin{equation}\label{f-f0}
		\frac{f(z)}{z}\prec \frac{f_0(z)}{z}.
		\end{equation}
		Recall the result \cite[Lemma~1, p.1090]{bhowmik2018}, which reads as:
		let $f$ and $g$ be analytic in $\mathbb{D}$ with $g\prec f,$ where
		$f(z)=\sum_{n=0}^{\infty}a_n z^n$ and $ g(z)= \sum_{k=0}^{\infty}b_k z^k.$
		Then
		$\sum_{k=0}^{\infty}|b_k|r^k \leq \sum_{n=0}^{\infty}|a_n|r^n$ for $ |z|=r\leq1/3.$
		Now using the result for  $g\prec f$ and  \eqref{f-f0}, we have
		\begin{equation*}
		\sum_{k=1}^{\infty}|b_k|r^k \leq	r+\sum_{n=2}^{\infty}|a_n|r^n \leq \hat{f}_0(r)\quad\text{for}\; |z|=r\leq1/3.
		\end{equation*}
		Finally, to establish the inequality \eqref{compare}, it is enough to show $\hat{f}_0(r) \leq r_{*}.$
		But this holds whenever $r\leq r_0$, where $r_0$ is the least positive root of the equation $\hat{f}_0(r)=r_{*}.$
		The existence of the root $r_0$ is ensured by the relations $\hat{f}_0(1)\geq |f_0(1)|\geq r_{*}$ and $\hat{f}_0(0)<r_{*}$. Thus, if $r_b=\min\{r_0, 1/3 \}$ then $\sum_{k=1}^{\infty}|b_k|r^k \leq d(0, \partial{\Omega}) $ holds.
		Hence the result. \qed
	\end{proof}
	
	\begin{remark}
		Let us further assume that the coefficients $B_n$ of $\psi$ are positive. Then the function $f_0(z)=z+\sum_{n=2}^{\infty}t_n z^n$ defined by integral representation \eqref{int-rep} can be written as
		$$f_0(z)=z \exp\left(\sum_{n=1}^{\infty}\frac{B_n}{n}z^n\right),$$
		which implies
		$f_0(r)=\hat{f}_0(r)$ for $|z|=r.$
	\end{remark}	
	
	From the proof of Theorem~\ref{bohr}, we have the following:
	\begin{theorem}
		Let $r_{*}$ be the Koebe-radius for the class $\mathcal{S}^*(\psi),$  $f_0(z)$ be given by the equation~\eqref{int-rep} and $f(z)=z+\sum_{n=2}^{\infty}a_n z^n \in \mathcal{S}^*(\psi)$. Assume  $f_0(z)=z+\sum_{n=2}^{\infty}t_n z^n$ and $\hat{f}_0(r)=r+\sum_{n=2}^{\infty}|t_n|r^n$.
		Then   $\mathcal{S}^*(\psi)$ satisfies the Bohr-phenomenon
		\begin{equation*}
		r+\sum_{n=2}^{\infty}|a_n|r^n \leq d(f(0),\partial\Omega),\quad \text{for}\; |z|=r\leq r_b,
		\end{equation*}
		where $r_b=\min\{r_0, 1/3 \}$, $\Omega=f(\mathbb{D})$ and $r_0$ is the least positive root of the equation
		$$\hat{f}_0(r)=r_{*}.$$
	\end{theorem}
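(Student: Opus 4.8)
The plan is to follow the proof of Theorem~\ref{bohr} almost verbatim, the only difference being that here $f$ itself plays the role that the subordinant $g$ played there, so the step ``$g\prec f$'' is no longer needed. I would begin by recording the two facts about $f$ that are available: since $f\in\mathcal{S}^{*}(\psi)$, the Ma--Minda subordination result \cite[Theorem~1, p.~161]{minda94} gives $f(z)/z\prec f_0(z)/z$ with $f_0$ as in \eqref{int-rep}; and, letting $r\to1$ in Lemma~\ref{grth}, the Koebe radius is $r_{*}=-f_0(-1)$ with $\mathbb{B}(0,r_{*})\subset f(\mathbb{D})$, so that $r_{*}\le d(f(0),\partial\Omega)$.

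Next I would feed the subordination $f(z)/z\prec f_0(z)/z$ into the Bhowmik--Das inequality \cite[Lemma~1, p.~1090]{bhowmik2018}. Writing $f(z)/z=1+\sum_{n\ge2}a_n z^{n-1}$ and $f_0(z)/z=1+\sum_{n\ge2}t_n z^{n-1}$ (both analytic in $\mathbb{D}$ since $f(0)=f_0(0)=0$), the lemma gives $1+\sum_{n\ge2}|a_n|r^{n-1}\le 1+\sum_{n\ge2}|t_n|r^{n-1}$ for $|z|=r\le 1/3$; multiplying through by $r$ yields $r+\sum_{n\ge2}|a_n|r^n\le\hat f_0(r)$ on $|z|=r\le1/3$.

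Combining the two observations, it then remains only to ensure $\hat f_0(r)\le r_{*}$, for then $r+\sum_{n\ge2}|a_n|r^n\le r_{*}\le d(f(0),\partial\Omega)$. The function $\hat f_0$ is continuous and increasing on $[0,1)$ with $\hat f_0(0)=0<r_{*}$ and $\hat f_0(1)\ge|f_0(1)|\ge r_{*}$, so $\hat f_0(r)=r_{*}$ has a least positive root $r_0$ and $\hat f_0(r)\le r_{*}$ for all $r\le r_0$. Taking $r_b=\min\{r_0,1/3\}$ then establishes the claimed Bohr inequality, which is the assertion.

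I do not expect a genuine obstacle here, since the entire content is already contained in the proof of Theorem~\ref{bohr}; the only points needing a moment's care are the index shift when passing from the Taylor series of $f$ and $f_0$ to those of $f/z$ and $f_0/z$ before applying the Bhowmik--Das lemma, and the remark that the threshold $1/3$ coming from that lemma cannot be dispensed with, so that the Bohr radius produced is genuinely $\min\{r_0,1/3\}$.
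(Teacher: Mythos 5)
Your argument is correct and is exactly the route the paper intends: the paper gives no separate proof, stating only that the result follows ``from the proof of Theorem~\ref{bohr}'', and your adaptation (replacing the subordinant $g\prec f$ by $f$ itself, feeding $f(z)/z\prec f_0(z)/z$ into the Bhowmik--Das lemma, and comparing with the Koebe radius $r_{*}=-f_0(-1)$) is precisely that proof. The only point worth noting is that your existence argument for $r_0$ should read $\hat f_0(0)=0<r_{*}$ exactly as you wrote it, which is a minor correction of the paper's own slightly garbled ``$\hat f_0(0)<r_{*}$'' step; otherwise the two arguments coincide.
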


	{\bf Some Applications:}\\
	
	(a). If we choose $\psi(z)=(1+Dz)/(1+Ez)$, $-1\leq E<D\leq1$, then $\mathcal{S}^*(\psi)$ denotes the class of  Janowski starlike functions and we have
	\begin{equation}\label{jb}
	r_{*}=
	\left\{
	\begin{array}
	{lr}
	(1-E)^{\frac{D-E}{E}}, &  E\neq0; \\
	e^{-D},   & E=0.
	\end{array}
	\right.
	\end{equation}
	and
	\begin{equation}\label{jf}
	f_0(z)=
	\left\{
	\begin{array}
	{lr}
	z(1+Ez)^{\frac{D-E}{E}}, &  E\neq0; \\
	z\exp(Dz),   & E=0.
	\end{array}
	\right.
	\end{equation}
	Observe that if $E\neq0$, the $n$-th $(n\geq2)$ coefficients of $f_0(z)$ is given by
	\begin{equation}\label{t_n}
	t_n=\prod_{k=2}^{\infty}\frac{D-(k-1)E}{(k-1)!}.
	\end{equation}
	Thus from Theorem~\ref{bohr}, we have the following result:
	\begin{corollary}\label{janouwski}
		Let $\psi(z)=(1+Dz)/(1+Ez)$, $-1\leq E<D\leq1$. Then $S_{f}(\psi)$ (and $\mathcal{S}^{*}(\psi)$) satisfies the Bohr-phenomenon \eqref{bohrphenomenon} for $|z|=r\leq r_b,$
		where $r_b=\min\{r_0, 1/3 \}$ and $r_0$ is the least positive root of the equation
		$$r+\sum_{n=2}^{\infty}|t_n|r^n-(1-E)^{\tfrac{D-E}{E}}=0,$$
		where $t_n$ is as defined in \eqref{t_n}.
	\end{corollary}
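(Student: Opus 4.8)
The plan is to obtain this corollary simply by specializing Theorem~\ref{bohr} to the Janowski weight $\psi(z)=(1+Dz)/(1+Ez)$. First I would record that for $-1\le E<D\le1$ this $\psi$ is a Ma-Minda function: it is analytic and univalent on $\mathbb{D}$, satisfies $\psi(0)=1$ and $\psi'(0)=D-E>0$, maps $\mathbb{D}$ onto a disk (a half-plane when $E=-1$) contained in the right half-plane, and $\psi(\mathbb{D})$ is symmetric about the real axis. Consequently $\mathcal{S}^*(\psi)$ is the usual class of Janowski starlike functions and Theorem~\ref{bohr} applies verbatim, giving that $S_f(\psi)$ (and in particular $\mathcal{S}^*(\psi)$) satisfies the Bohr phenomenon on $|z|\le r_b=\min\{r_0,1/3\}$, where $r_0$ is the least positive root of $\hat f_0(r)=r_*$.

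Next I would identify the two data needed in Theorem~\ref{bohr}, namely the extremal function $f_0$ of \eqref{int-rep} and the Koebe radius $r_*$. Since $\psi(t)-1=(D-E)t/(1+Et)$, we get $\int_0^z(\psi(t)-1)/t\,dt=\tfrac{D-E}{E}\log(1+Ez)$ for $E\neq0$, hence
\begin{equation*}
f_0(z)=z\exp\!\left(\tfrac{D-E}{E}\log(1+Ez)\right)=z(1+Ez)^{(D-E)/E},
\end{equation*}
which is \eqref{jf}; expanding $(1+Ez)^{(D-E)/E}$ by the binomial series produces the coefficients $t_n$ recorded in \eqref{t_n}, so that $\hat f_0(r)=r+\sum_{n=2}^{\infty}|t_n|r^n$ with each $|t_n|$ the modulus of the corresponding binomial coefficient. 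The Koebe radius furnished by Theorem~\ref{bohr} is $r_*=-f_0(-1)=(1-E)^{(D-E)/E}$, in accordance with \eqref{jb}.

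Finally I would substitute these into Theorem~\ref{bohr}: the equation $\hat f_0(r)=r_*$ becomes
\begin{equation*}
r+\sum_{n=2}^{\infty}|t_n|r^n-(1-E)^{(D-E)/E}=0,
\end{equation*}
whose least positive root is the asserted $r_0$; its existence is guaranteed, exactly as in the proof of Theorem~\ref{bohr}, by $\hat f_0(0)=0<r_*$, $\hat f_0(1)\ge|f_0(1)|\ge r_*$, and the continuity and monotonicity of $\hat f_0$ on $[0,1)$. Putting $r_b=\min\{r_0,1/3\}$ then yields \eqref{bohrphenomenon}, and the case $E=0$ runs identically starting from $f_0(z)=z\exp(Dz)$, $t_n=D^{n-1}/(n-1)!$ and $r_*=e^{-D}$, which are the $E\to0$ limits of the displayed formulas. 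I do not anticipate any genuine difficulty here; the only steps needing a little care are the binomial-coefficient computation behind \eqref{t_n} and checking that $\sum_{n\ge2}|t_n|r^n$ converges on $|z|=r<1$ (it does, since $(1+Ez)^{(D-E)/E}$ is analytic on $\mathbb{D}$), which is precisely what makes $\hat f_0$ a well-defined increasing continuous function on $[0,1)$ and pins down $r_0$ unambiguously.
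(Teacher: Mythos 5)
Your proposal is correct and follows essentially the same route as the paper: the corollary is obtained by specializing Theorem~\ref{bohr} to $\psi(z)=(1+Dz)/(1+Ez)$, computing $f_0(z)=z(1+Ez)^{(D-E)/E}$ (resp. $z\exp(Dz)$ for $E=0$), $r_*=-f_0(-1)=(1-E)^{(D-E)/E}$, and the coefficients $t_n$, exactly as in the paper's application (a). No gaps.
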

	
	Note that for the Janowski class, sharp coefficients bounds in general are not known. Now as an application of Corollary \ref{janouwski}, we obtain the following result when $t_n>0$:
	\begin{corollary} {\bf{(Bohr-radius with Janowski class)} }\label{jrslt}
		Let $\psi(z)=(1+Dz)/(1+Ez)$, $-1\leq E<D \leq1$.
		\begin{itemize}
			\item [$(i)$]
			If $E=0$ and $D\geq \frac{3}{4}\log{3} $. Then $S_{f}(\psi)$ (and $\mathcal{S}^{*}(\psi)$) satisfies the Bohr-phenomenon \eqref{bohrphenomenon}
			for $|z|=r\leq r_0,$
			where $r_0$ is the only real root of the equation
			\begin{equation}\label{eb0}
			1-re^{D(1+r)}=0.
			\end{equation}
			
			\item [$(ii)$] If $E\neq0$ and further satisfies
			\begin{equation}\label{DE}
			3(1-E)^{\frac{D-E}{E}} \leq (1+E/3)^{\frac{D-E}{E}}.
			\end{equation}
			Then $S_{f}(\psi)$ (and $\mathcal{S}^{*}(\psi)$) satisfies the Bohr-phenomenon
			\eqref{bohrphenomenon} for $|z|=r\leq r_0,
			$
			where $r_0$ is the only real root of the equation
			\begin{equation}\label{eb}
			(1-E)^{\frac{D-E}{E}}-r(1+Er)^{\frac{D-E}{E}}=0.
			\end{equation}
		\end{itemize}
		The result is sharp for the function $f_0$ defined in \eqref{jf}.
	\end{corollary}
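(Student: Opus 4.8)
The plan is to read Corollary~\ref{jrslt} off from Corollary~\ref{janouwski} by exploiting the standing hypothesis $t_n>0$, which makes $\hat{f}_0$ coincide with $f_0$ and turns the abstract equation $\hat{f}_0(r)=r_{*}$ into the closed forms \eqref{eb0} and \eqref{eb}. First I would pin down when $t_n>0$ for all $n\geq 2$. For $E=0$, formula \eqref{jf} gives $f_0(z)=z\exp(Dz)$, so $t_n=D^{n-1}/(n-1)!$, which is positive exactly when $D>0$; since $\tfrac34\log 3>0$, the hypothesis of part $(i)$ guarantees this. For $E\neq 0$, writing $\beta:=(D-E)/E$ and expanding $z(1+Ez)^{\beta}$ yields $t_n=\tfrac{1}{(n-1)!}\prod_{j=1}^{n-1}(D-jE)$ (this is the content of \eqref{t_n}), and under $-1\leq E<D\leq 1$ every factor $D-jE$ is positive for all $j\geq 1$ if and only if $E<0$; if $E>0$ the factors eventually change sign and $t_n$ is no longer positive for large $n$. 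Hence the implicit hypothesis $t_n>0$ together with $E\neq 0$ in part $(ii)$ forces $-1\leq E<0$, and conversely this range gives $t_n>0$.

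Granting $t_n>0$, we have $\hat{f}_0(r)=r+\sum_{n=2}^{\infty}t_n r^n=f_0(r)$. Using \eqref{jb} and \eqref{jf}, the defining equation $\hat{f}_0(r)=r_{*}$ of Corollary~\ref{janouwski} becomes $re^{Dr}=e^{-D}$, i.e. $1-re^{D(1+r)}=0$, in part $(i)$, and $r(1+Er)^{\beta}=(1-E)^{\beta}$, i.e. \eqref{eb}, in part $(ii)$. Thus it only remains to show that the least positive root $r_0$ of this equation actually satisfies $r_0\leq\tfrac13$, for then $r_b=\min\{r_0,\tfrac13\}=r_0$ and Corollary~\ref{janouwski} delivers the conclusion. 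Differentiating, $f_0'(r)=e^{Dr}(1+Dr)$ when $E=0$ and $f_0'(r)=(1+Er)^{\beta-1}(1+Dr)$ when $E\neq 0$; since $-1<D\leq 1$ and $-1\leq E<0$, both expressions are strictly positive on $[0,1]$, so $f_0$ increases from $f_0(0)=0$ to $f_0(1)\geq r_{*}$ (the latter being the inequality already used in the proof of Theorem~\ref{bohr}). Consequently $f_0(r)=r_{*}$ has a unique positive root $r_0\in(0,1]$, and by monotonicity $r_0\leq\tfrac13$ if and only if $f_0(\tfrac13)\geq r_{*}$.

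This last inequality is precisely where the two hypotheses enter. In part $(i)$ it reads $\tfrac13 e^{D/3}\geq e^{-D}$, equivalently $e^{4D/3}\geq 3$, equivalently $D\geq\tfrac34\log 3$. In part $(ii)$ it reads $\tfrac13(1+E/3)^{\beta}\geq(1-E)^{\beta}$, equivalently $(1+E/3)^{\beta}\geq 3(1-E)^{\beta}$, which is exactly condition \eqref{DE}. For the uniqueness of the real root asserted in the statement, monotonicity of $f_0$ handles $r\in(0,1]$, while for $r\leq 0$ one checks directly that $re^{D(1+r)}\leq 0$ and $r(1+Er)^{\beta}\leq 0$ (here $1+Er>0$ since $E<0$ and $r\leq 0$), so the left-hand sides of \eqref{eb0} and \eqref{eb} stay positive there and no other real root appears. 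For sharpness, take $f=g=f_0$: then $g\in S_{f_0}(\psi)$, $\sum_{k=1}^{\infty}|t_k|r^k=f_0(r)$ because $t_n>0$, and $d(f_0(0),\partial f_0(\mathbb{D}))=-f_0(-1)=r_{*}$, since the radial limit $f_0(-1)$ lies on $\partial f_0(\mathbb{D})$ while $\mathbb{B}(0,r_{*})\subset f_0(\mathbb{D})$; hence \eqref{bohrphenomenon} holds with equality at $r=r_0$ and fails for $r_0<r\leq\tfrac13$.

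The computations are all routine; the only conceptual point — and the thing one must get right — is that the single hypothesis $t_n>0$ does double duty: it collapses $\hat{f}_0$ to $f_0$ (so the equation becomes explicit) and it restricts $E$ to $(-1,0)$ when $E\neq 0$, and it is exactly the monotonicity of $f_0$ on $[0,1]$ that reduces the entire statement to the one-point inequality $f_0(\tfrac13)\geq r_{*}$, which unpacks to $D\geq\tfrac34\log 3$ and to \eqref{DE}. A minor caveat is that for sharpness one needs $d(f_0(0),\partial f_0(\mathbb{D}))=r_{*}$ as an equality, not merely the bound $\geq r_{*}$ used in the proof of Theorem~\ref{bohr}; this follows because $f_0$ is the extremal function of Lemma~\ref{grth}.
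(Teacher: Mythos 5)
Your proposal is correct and follows essentially the same route as the paper: it reads the corollary off from Corollary~\ref{janouwski} via $\hat{f}_0=f_0$ (using $t_n>0$) and reduces everything to the inequality $f_0(r)\leq r_{*}$ for $r\leq r_0$. The only difference is that you make explicit what the paper dismisses as ``obvious'' --- namely that $t_n>0$ forces $E<0$ when $E\neq0$, and that the hypotheses $D\geq\tfrac{3}{4}\log 3$ and \eqref{DE} are precisely the condition $f_0(\tfrac13)\geq r_{*}$, i.e.\ $r_0\leq\tfrac13$, which is what lets one drop the $\min\{r_0,1/3\}$ --- so your write-up is a faithful, more careful version of the paper's argument.
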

	\begin{proof}
		{{(i):}} Since $E=0,$ we have $r_*=e^{-D}$. Moreover $\hat{f}_0(r)=f_0(r)= r\exp(Dr)$. Now we need to show
		\begin{equation}\label{B0S}
		r\exp(Dr)\leq e^{-D}
		\end{equation}
		or equivalently $T(r):=1-r e^{D(1+r)}\geq0$
		holds for $r\leq r_0$. Which obviously holds for $\frac{3}{4}\log{3} \leq D \leq1$. Since $d(f_0(0),\partial f_0(\mathbb{D}))= r_*$, therefore we see from inequality \eqref{B0S} that Bohr-radius is sharp for the function $f_0$ given by \eqref{jf}.\\
		
		{{(ii):}} Proceeding as in case~(i), it is sufficient to show the inequality
		\begin{equation}\label{ABS}
		r(1+Er)^{\frac{D-E}{E}}\leq (1-E)^{\frac{D-E}{E}}
		\end{equation}
		or equivalently $g(r):=(1-E)^{\frac{D-E}{E}}-r(1+Er)^{\frac{D-E}{E}} \geq0$
		holds for $r\leq r_0$. Which obviously follows whenever $D$ and $E$ satisfies \eqref{DE}. In view of the inequality~\eqref{ABS}, the sharp Bohr-radius is achieved for the function $f_0$ given by \eqref{jf}. \qed
	\end{proof}
	
	\begin{remark}{\bf{(Bohr-radius with starlike functions of order $\alpha$ )} }
		Let $\psi(z):=(1+(1-2\alpha)z)/(1-z),$ where  $0\leq \alpha<1$. We see $\mathcal{S}^*(\psi) :=\mathcal{S}^*(\alpha)$ and for this class, we have
		$$r_{*}=\frac{1}{2^{2(1-\alpha)}} \quad \text{and} \quad f_0(z)=\frac{z}{(1-z)^{2(1-\alpha)}}.$$
		Observe that here $\hat{f}_0(r)=f_0(r)$. Now as an application of Corrollary \ref{jrslt}, we obtain the result due to Bhowmik et al. \cite{bhowmik2018}, namely: {\it If $0\leq\alpha\leq1/2$. Then $S_{f}(\psi)$ satisfies the Bohr-phenomenon $\sum_{k=1}^{\infty}|b_k|r^k \leq d(f(0),\partial\Omega),\; \text{for}\; |z|=r\leq r_b,$
			where $r_b=\min\{r_0, 1/3 \}=r_0$ and $r_0$ is the only real root of the equation
			$ {(1-r)^{2(1-\alpha)}}/r=2^{2(1-\alpha)}.$
			The result is sharp.}
	\end{remark}
	
	Now form the above remark, in particular, we have:
	\begin{corollary}
		If $0\leq\alpha\leq1/2$. Then the class $\mathcal{S}^*((1+(1-2\alpha)z)/(1-z))$ satisfies the Bohr-phenomenon  \eqref{bohrphenomenon} for $|z|=r\leq r_0$, where $r_0$ is the only real root of the equation
		$$ {(1-r)^{2(1-\alpha)}}/r=2^{2(1-\alpha)}.$$
		The result is sharp. In particular, the Bohr radius for the class $\mathcal{S}^{*}$ is $3-2\sqrt{2}\approx 0.1713$.
	\end{corollary}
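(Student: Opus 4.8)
This corollary is a specialization of the foregoing results, so the plan is mostly to set the parameters and clean up. I would obtain it as the case $D=1-2\alpha$, $E=-1$ of Corollary~\ref{jrslt}$(ii)$ (equivalently, via the $\mathcal{S}^*(\psi)$-version of Theorem~\ref{bohr} used in the preceding Remark), noting that for these parameters $\mathcal{S}^*((1+Dz)/(1+Ez))=\mathcal{S}^*(\alpha)$, the extremal function is $f_0(z)=z/(1-z)^{2(1-\alpha)}$ with positive Taylor coefficients $t_n=\binom{2(1-\alpha)+n-1}{n}$ (so that $\hat f_0(r)=f_0(r)$), and the Koebe radius is $r_*=-f_0(-1)=2^{-2(1-\alpha)}$. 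Since $f\in S_f(\psi)$ trivially, the Bohr-phenomenon for $S_f(\psi)$ transfers to $\mathcal{S}^*(\psi)$ with the same radius.

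Next I would verify that the hypotheses of Corollary~\ref{jrslt}$(ii)$ hold precisely on $0\le\alpha\le1/2$. Positivity of $t_n$ is automatic, while condition \eqref{DE}, after substituting $(1-E)=2$, $(1+E/3)=2/3$ and $(D-E)/E=-2(1-\alpha)$, simplifies to $3\le 3^{2(1-\alpha)}$, i.e. $2(1-\alpha)\ge1$, i.e. $\alpha\le1/2$. Under this condition the defining equation \eqref{eb}, $(1-E)^{(D-E)/E}-r(1+Er)^{(D-E)/E}=0$, collapses to $2^{-2(1-\alpha)}=r(1-r)^{-2(1-\alpha)}$, that is $(1-r)^{2(1-\alpha)}/r=2^{2(1-\alpha)}$, the equation in the statement. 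As $f_0(r)=r(1-r)^{-2(1-\alpha)}$ increases strictly from $0$ to $\infty$ on $(0,1)$, this has a unique root $r_0\in(0,1)$; the same computation gives $f_0(1/3)\ge r_*$ exactly when $\alpha\le1/2$, hence $r_0\le1/3$ and $r_b=\min\{r_0,1/3\}=r_0$.

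The Bohr-phenomenon \eqref{bohrphenomenon} on $|z|\le r_0$ and its sharpness then follow directly: $f_0(\mathbb{D})$ is a slit plane with $d(f_0(0),\partial f_0(\mathbb{D}))=r_*$, and for $g=f_0$ the left-hand side of \eqref{bohrphenomenon} equals $\hat f_0(r)=f_0(r)$, which equals $r_*$ exactly at $r=r_0$ and exceeds it for $r>r_0$, so no larger radius works. Finally, setting $\alpha=0$ the equation becomes $(1-r)^2/r=4$, i.e. $r^2-6r+1=0$, whose root in $(0,1)$ is $3-2\sqrt2\approx0.1713$, which is the asserted Bohr radius for $\mathcal{S}^*$. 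The only point requiring any care is the equivalence of \eqref{DE} with $\alpha\le1/2$ and the accompanying check $r_0\le1/3$; everything else is already contained in Theorem~\ref{bohr} and Corollary~\ref{jrslt}, so there is no substantive obstacle.
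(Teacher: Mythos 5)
Your proposal is correct and takes essentially the same route as the paper: the corollary is obtained there directly from the preceding remark, which is exactly the specialization $D=1-2\alpha$, $E=-1$ of Corollary~\ref{jrslt}$(ii)$ that you carry out, and your verification that condition \eqref{DE} reduces to $3\leq 3^{2(1-\alpha)}$, i.e.\ $\alpha\leq 1/2$, together with the reduction of \eqref{eb} to $(1-r)^{2(1-\alpha)}/r=2^{2(1-\alpha)}$, supplies precisely the details the paper leaves implicit. (The only slip is cosmetic: the $n$-th coefficient of $z(1-z)^{-2(1-\alpha)}$ is $\binom{2(1-\alpha)+n-2}{n-1}$ rather than $\binom{2(1-\alpha)+n-1}{n}$, which does not affect positivity or the identity $\hat{f}_0(r)=f_0(r)$.)
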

	
	(b). If we choose $\psi(z)=\sqrt{1+z}$, then $\mathcal{S}^{*}(\psi):=\mathcal{SL}^{*}$, the class of lemniscate starlike functions and for this class we have:
	\begin{equation}\label{lemiscate-conj}
	f_0(z)=\frac{4z \exp(2\sqrt{1+z}-2)}{(1+\sqrt{1+z})^2} \quad \text{and}\quad  r_{*}=-f_0(-1)\approx 0.541341.
	\end{equation}
	Also in this case $\hat{f}_0(r)=f_0(r)$ and therefore, we obtain the following corollary:
	\begin{corollary}
		The class $S_{f}(\psi)$ (and $\mathcal{SL}^{*}$), where $\psi(z)=\sqrt{1+z}$ satisfies the Bohr-phenomenon  \eqref{bohrphenomenon} for $|z|=r\leq 1/3.$
	\end{corollary}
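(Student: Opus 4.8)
The plan is to apply Theorem~\ref{bohr} with $\psi(z)=\sqrt{1+z}$. For this $\psi$, the extremal function $f_0$ of \eqref{int-rep} and the Koebe radius $r_*$ of $\mathcal{SL}^*$ are exactly those recorded in \eqref{lemiscate-conj}:
\[
f_0(z)=\frac{4z\exp(2\sqrt{1+z}-2)}{(1+\sqrt{1+z})^{2}},\qquad r_*=-f_0(-1)=4e^{-2}\approx 0.541341 .
\]
Since, as noted just above, $\hat f_0(r)=f_0(r)$ for this choice, Theorem~\ref{bohr} gives that $S_f(\psi)$ (and hence $\mathcal{SL}^*$) satisfies the Bohr phenomenon \eqref{bohrphenomenon} for $|z|=r\le r_b=\min\{r_0,1/3\}$, where $r_0$ is the least positive root of $f_0(r)=r_*$; its existence is already supplied in the proof of Theorem~\ref{bohr} by $\hat f_0(0)<r_*$ and $\hat f_0(1)\ge r_*$. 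So the whole statement reduces to the single inequality $r_0\ge 1/3$, i.e.\ to $r_b=1/3$.

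To get $r_0\ge 1/3$, I would first note that $f_0$ is real, continuous and strictly increasing on $[0,1)$: this follows from $f_0'(r)=\frac{f_0(r)}{r}\sqrt{1+r}>0$ for $0<r<1$ (equivalently, from the positivity of $\frac{\sqrt{1+t}-1}{t}$ in the integral representation), together with $f_0(0)=0$. Hence $r_0\ge 1/3$ is equivalent to $f_0(1/3)\le r_*$. Using $\sqrt{1+\tfrac13}=\tfrac{2}{\sqrt3}$, a direct evaluation gives
\[
f_0\!\left(\tfrac13\right)=\frac{\tfrac43\exp\!\left(\tfrac{4}{\sqrt3}-2\right)}{\left(1+\tfrac{2}{\sqrt3}\right)^{2}}\approx 0.3913<0.5413\approx r_*,
\]
and by monotonicity $f_0(r)<r_*$ for every $r\le 1/3<r_0$. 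Therefore $r_b=\min\{r_0,1/3\}=1/3$, which is the claim.

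I do not expect a real obstacle: once Theorem~\ref{bohr} is available the result is an application together with one numerical comparison, and the content is simply that the Koebe radius $r_*\approx 0.541$ of $\mathcal{SL}^*$ is comfortably large enough that $r_0$ overshoots the universal bound $1/3$. The only place that calls for a little care is the identity $\hat f_0(r)=f_0(r)$, i.e.\ the behaviour of the majorant power series of $f_0$; if one prefers not to invoke it, one can argue instead from $\mathcal{SL}^*\subset\mathcal{S}^*$ (valid since $\Re\sqrt{1+z}>0$ on $\mathbb{D}$), so that $|t_n|\le n$ by the classical starlike coefficient bound, and then estimate $\hat f_0(1/3)$ by its partial sum through the first few terms plus the tail $\sum_{n\ge 5}n\,(1/3)^n$, which still lies well below $r_*$; this yields the same conclusion $r_b=1/3$.
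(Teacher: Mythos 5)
Your proposal is correct and follows essentially the same route as the paper: it invokes Theorem~\ref{bohr} with the data $f_0$ and $r_*=4e^{-2}$ from \eqref{lemiscate-conj}, uses $\hat f_0(r)=f_0(r)$, and reduces everything to the numerical check $f_0(1/3)\approx 0.3913<r_*\approx 0.5413$, so that $r_0>1/3$ and $r_b=1/3$. Your extra care in justifying the monotonicity of $f_0$ and in offering a fallback estimate for $\hat f_0(1/3)$ via $|t_n|\le n$ (in case the nonnegativity of the $t_n$ behind $\hat f_0=f_0$ is questioned) only adds rigor to what the paper leaves implicit.
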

	
	(c). If we consider $\psi(z)=1+ze^z$, then $\mathcal{S}^{*}(\psi):=\mathcal{S}^{*}_{\wp}$, the class of cardioid starlike functions introduced in \cite{Kumar-cardioid} and for this class, we have:
	\begin{equation}\label{cardioid-conj}
	f_0(z)=z\exp(e^z-1) \quad \text{and}\quad r_{*}=-f_0(-1)\approx0.531464.
	\end{equation}
	Here we can also see that $\hat{f}_0(r)=f_0(r)$ and we obtain the following corollary:
	\begin{corollary}\label{cardioid}
		The class $S_{f}(\psi)$ (and $\mathcal{S}^{*}_{\wp}$), where $\psi(z)=1+ze^z$ satisfies the Bohr-phenomenon \eqref{bohrphenomenon} for $|z|=r\leq 1/3.$
	\end{corollary}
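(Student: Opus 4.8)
The plan is to read the statement off from Theorem~\ref{bohr} applied with $\psi(z)=1+ze^z$, once the two ingredients that theorem needs have been recorded. From the integral representation \eqref{int-rep} one computes $f_0(z)=z\exp(e^z-1)$, and letting $r\to 1^-$ in Lemma~\ref{grth} gives the Koebe radius $r_*=-f_0(-1)=\exp(e^{-1}-1)\approx 0.531464$; these are precisely the values already recorded in \eqref{cardioid-conj}. Moreover $\psi(z)=1+ze^z=1+z+z^2+\tfrac{1}{2}z^3+\cdots$ has all Taylor coefficients $B_n>0$, so the remark following Theorem~\ref{bohr} applies and gives $\hat{f}_0(r)=f_0(r)=r\exp(e^r-1)$ for $|z|=r$.

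With these identifications in hand, Theorem~\ref{bohr} already delivers the Bohr-phenomenon \eqref{bohrphenomenon} on the disk $|z|\le r_b=\min\{r_0,1/3\}$, where $r_0$ is the least positive root of $\hat{f}_0(r)=r_*$. Hence the only point to verify is that $r_0\ge 1/3$. Since $r\mapsto r\exp(e^r-1)$ is strictly increasing on $[0,1)$ (its derivative is $\exp(e^r-1)(1+re^r)>0$) and vanishes at $r=0$, the inequality $r_0\ge 1/3$ is equivalent to the single inequality $\hat{f}_0(1/3)\le r_*$.

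That last inequality is a short numerical check: $\hat{f}_0(1/3)=\tfrac{1}{3}\exp(e^{1/3}-1)\approx 0.4951$, whereas $r_*\approx 0.5315$, so $\hat{f}_0(1/3)<r_*$ with comfortable margin, giving $r_b=1/3$ and proving the claim for $S_f(\psi)$. The statement for $\mathcal{S}^{*}_{\wp}$ is then immediate, either as the special case $g=f$ of what we proved (any $f\in\mathcal{S}^{*}_{\wp}$ satisfies $f\prec f$, hence $f\in S_f(\psi)$), or via $f(z)/z\prec f_0(z)/z$ from \eqref{f-f0}. I do not anticipate a genuine obstacle here; the only steps needing a word of care are the monotonicity of $r\exp(e^r-1)$ and confirming that the numerical values of $r_*$ and $\hat{f}_0(1/3)$ come out on the correct side of the inequality — which they do with room to spare.
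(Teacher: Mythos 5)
Your proposal is correct and matches the paper's (implicit) argument: the corollary is read off from Theorem~\ref{bohr} using $f_0(z)=z\exp(e^z-1)$, $r_*=-f_0(-1)=\exp(e^{-1}-1)\approx 0.5315$, and $\hat{f}_0=f_0$ (positive coefficients), after which $r_b=\min\{r_0,1/3\}=1/3$ because the root $r_0$ of $f_0(r)=r_*$ is $\approx 0.349681>1/3$ --- exactly the inequality $\tfrac13\exp(e^{1/3}-1)<r_*$ you verify. Your added monotonicity check of $r\mapsto r\exp(e^r-1)$ is a harmless extra precaution; nothing is missing.
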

	
	Ali et al.~\cite{jain2019} also showed that the coefficient bound of a function in  a class have a role in the estimation of the Bohr-radius. Observed that for each $f\in \mathcal{S}^{*}(\psi),$ the class $S_{f}(\psi)$ satisfies the Bohr-phenomenon for $r\leq \min(1/3, r_0)$, where $r_0$ is the least positive root of $\hat{f}_0(r)-r_{*}=0$. Since $\mathcal{S}^*(\psi)\subset \bigcup_{f\in \mathcal{S}^*(\psi)} S_{f}(\psi) $, therefore the Bohr-radius for the class $\mathcal{S}^{*}(\psi)$ is $r\geq \min(1/3, r_0).$ In Corollary~\ref{cardioid}, we find  $r_0\approx0.349681$ (an upper bound for Bohr radius), which is almost close to $1/3\approx0.33333$ and is the unique root of $f_0(r)-r_{*}=0$.
	Moreover, the bound for the coefficients of the functions belonging to $\mathcal{S}^{*}_{\wp}$ and $\mathcal{SL}^{*}$ have been conjectured \cite{Kumar-cardioid,sokolconj-2009} with the extremals given in \eqref{cardioid-conj} and \eqref{lemiscate-conj} respectively. Thus by using Theorem~\ref{bohr} and the approach dealt in \cite{jain2019} (assuming that conjectures are true), we propose the following conjectures:	
\begin{conj}
 The Bohr-radius for the class $\mathcal{S}^{*}_{\wp}$ is $r_0\approx0.349681$ which is the unique root in $(0,1)$ of the equation $$re^{e^r}=e^{1/e}.$$
\end{conj}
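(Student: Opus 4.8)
The plan is to apply Theorem~\ref{bohr} to the specific choice $\psi(z)=1+ze^z$, for which $f_0(z)=z\exp(e^z-1)$ as recorded in \eqref{cardioid-conj}. First I would observe that, because the coefficients $B_n$ of $\psi(z)=1+ze^z$ are all nonnegative (indeed $\psi(z)=1+\sum_{n\ge 1} z^n/(n-1)!$), the remark following Theorem~\ref{bohr} gives $\hat f_0(r)=f_0(r)$; this is what allows the Bohr estimate to be governed by the honest function $f_0(r)$ rather than a majorant series. Next I would identify the Koebe radius: by Lemma~\ref{grth} (letting $r\to 1$), $r_*=-f_0(-1)=\exp(e^{-1}-1)$. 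With these in hand, Theorem~\ref{bohr} tells us that the Bohr radius of $\mathcal{S}^{*}_{\wp}$ (conditionally on the conjectured sharp coefficient bounds, so that the bound $r\ge\min\{1/3,r_0\}$ is actually attained) is the least positive root $r_0$ of $f_0(r)=r_*$, i.e.\ of
\begin{equation*}
r\exp(e^{r}-1)=\exp(e^{-1}-1).
\end{equation*}

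The second step is to simplify this transcendental equation into the stated form $re^{e^r}=e^{1/e}$. Multiplying both sides of $r\exp(e^r-1)=\exp(e^{-1}-1)$ by $e$ gives $r\exp(e^r)=\exp(e^{-1})=e^{1/e}$, which is exactly the claimed equation. I would then check that this equation has a unique root in $(0,1)$: the function $r\mapsto re^{e^r}$ is strictly increasing on $(0,1)$ (product of two positive increasing functions), equals $0$ at $r=0$ and equals $e^{e}>e^{1/e}$ at $r=1$, so by the intermediate value theorem there is exactly one solution, and a numerical evaluation pins it at $r_0\approx 0.349681$. Since $r_0>1/3$, the Bohr radius equals $\min\{1/3,r_0\}=1/3$ for the full class in the unconditional statement, but the conjecture asserts that with the true (conjectured) coefficient bounds the sharp value is $r_0$ itself; this is consistent with the discussion preceding the conjecture, where it is noted that $r_0$ serves as an \emph{upper} bound for the Bohr radius.

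The main obstacle is not really computational but conceptual: turning the lower-bound statement of Theorem~\ref{bohr} into an equality requires knowing that the conjectured sharp coefficient estimates for $\mathcal{S}^{*}_{\wp}$ hold, so that the extremal function $f_0$ in \eqref{cardioid-conj} genuinely realizes the worst case and the Bohr radius cannot exceed $r_0$. This is precisely why the statement is offered as a conjecture rather than a theorem, and following the approach of Ali et al.~\cite{jain2019} one would combine the (assumed) sharp bounds $|a_n|\le |t_n|$ with the distance estimate $d(f(0),\partial\Omega)\le r_*$ coming from the Koebe-type result to argue that equality forces $r\le r_0$. Modulo that input, the proof is just the two elementary steps above: invoke Theorem~\ref{bohr} with $\psi(z)=1+ze^z$, and rewrite $f_0(r)=r_*$ as $re^{e^r}=e^{1/e}$.
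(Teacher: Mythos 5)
Your proposal matches the paper's own (necessarily heuristic) justification: the paper also obtains $r_0$ as the unique root of $f_0(r)=r_*$ with $f_0(z)=z\exp(e^z-1)$ and $r_*=-f_0(-1)=\exp(e^{-1}-1)$, which rearranges to $re^{e^r}=e^{1/e}$, and likewise defers the sharpness to the conjectured coefficient bounds for $\mathcal{S}^{*}_{\wp}$ via the approach of Ali et al. The algebra, the monotonicity check, and the caveat about why this remains a conjecture are all correct and in the same spirit as the text preceding the statement.
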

\begin{conj}
 The Bohr-radius for the class $\mathcal{SL}^{*}$ is $r_0\approx0.439229$, which the unique root in $(0,1)$ of the equation
	$$e^2 r \exp(2\sqrt{1+r}-2)=(1+\sqrt{1+r})^2.$$
\end{conj}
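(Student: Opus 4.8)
The plan is to pin down the Bohr radius of $\mathcal{SL}^{*}$ by proving two matching bounds: the Bohr inequality \eqref{bohrphenomenon} holds for every $f\in\mathcal{SL}^{*}$ when $|z|=r\le r_{0}$, and it fails for the specific function $f_{0}$ of \eqref{lemiscate-conj} once $r>r_{0}$; here $r_{0}$ is the (unique) root in $(0,1)$ of $f_{0}(r)=r_{*}$. The whole argument is, by its nature, \emph{conditional}: it rests on the coefficient conjecture for $\mathcal{SL}^{*}$ \cite{sokolconj-2009}, namely that $|a_{n}|\le|t_{n}|$ for every $f(z)=z+\sum_{n\ge2}a_{n}z^{n}\in\mathcal{SL}^{*}$, where $f_{0}(z)=z+\sum_{n\ge2}t_{n}z^{n}$, with equality only for rotations of $f_{0}$. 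I will also use, as recorded just after \eqref{lemiscate-conj}, that $t_{n}\ge0$ for all $n$, so that $\hat f_{0}(r)=f_{0}(r)$ and $f_{0}$ is strictly increasing on $[0,1)$; since $f_{0}(0)=0<r_{*}<f_{0}(1^{-})$, the root $r_{0}$ exists, is unique, and satisfies $f_{0}(r)\le r_{*}$ exactly for $r\le r_{0}$.

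For the lower bound, fix an arbitrary $f\in\mathcal{SL}^{*}$. Applying the conjectured coefficient bound termwise gives $\sum_{k\ge1}|a_{k}|r^{k}\le r+\sum_{n\ge2}t_{n}r^{n}=\hat f_{0}(r)=f_{0}(r)$ for all $r\in(0,1)$. On the other hand, letting $r\to1$ in Lemma~\ref{grth} identifies the Koebe radius of $\mathcal{SL}^{*}$ as $r_{*}=-f_{0}(-1)$: indeed, if $w_{0}\in\partial f(\mathbb{D})$ then $w_{0}=\lim f(z_{n})$ with $|z_{n}|\to1$ (openness of $f(\mathbb{D})$), whence $|w_{0}|\ge\lim\bigl(-f_{0}(-|z_{n}|)\bigr)=-f_{0}(-1)=r_{*}$ by continuity of $f_{0}$ on $\overline{\mathbb{D}}$; thus $\mathbb{B}(0,r_{*})\subseteq f(\mathbb{D})$ and $d(f(0),\partial f(\mathbb{D}))\ge r_{*}$. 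Combining, $\sum_{k\ge1}|a_{k}|r^{k}\le f_{0}(r)\le r_{*}\le d(f(0),\partial f(\mathbb{D}))$ whenever $|z|=r\le r_{0}$, which is precisely \eqref{bohrphenomenon}; as $f$ was arbitrary, the Bohr radius of $\mathcal{SL}^{*}$ is at least $r_{0}$.

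For sharpness, I would test $f=f_{0}\in\mathcal{SL}^{*}$ itself. Since $\psi(z)=\sqrt{1+z}$ extends continuously to $\overline{\mathbb{D}}$, so does $f_{0}$, and Lemma~\ref{grth} applied with $f=f_{0}$ (letting $r\to1$) gives $\min_{|z|=1}|f_{0}(z)|=-f_{0}(-1)=r_{*}$, so that $d(f_{0}(0),\partial f_{0}(\mathbb{D}))=r_{*}$ \emph{exactly}. But $\sum_{k\ge1}|t_{k}|r^{k}=f_{0}(r)>f_{0}(r_{0})=r_{*}$ for every $r>r_{0}$ because $f_{0}$ is increasing; hence \eqref{bohrphenomenon} fails for $f_{0}$ beyond $r_{0}$, forcing the Bohr radius to be at most $r_{0}$. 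Together with the previous paragraph this yields equality. Finally, substituting $r_{*}=-f_{0}(-1)=4e^{-2}$ into $f_{0}(r)=r_{*}$ and clearing the denominator $(1+\sqrt{1+r})^{2}$ turns the defining relation into $e^{2}r\exp(2\sqrt{1+r}-2)=(1+\sqrt{1+r})^{2}$, whose root in $(0,1)$ is numerically $r_{0}\approx0.439229$, as asserted.

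The main obstacle is exactly the conditional nature of the proof: the whole lower bound beyond $|z|\le1/3$ is driven by the open coefficient conjecture $|a_{n}|\le|t_{n}|$ for $\mathcal{SL}^{*}$. Without it one can only invoke Theorem~\ref{bohr}, whose subordination input (the lemma of Bhowmik and Das \cite{bhowmik2018}) is confined to the disk $|z|\le1/3$, giving merely $r_{b}=\min\{1/3,r_{0}\}=1/3$ since $r_{0}\approx0.439>1/3$. A secondary point deserving care is the verification that $f_{0}$ has nonnegative Taylor coefficients, so that $\hat f_{0}=f_{0}$ and the extremal function literally realizes the bound; this is stated in the paragraph preceding \eqref{lemiscate-conj} and should be confirmed from the closed form $f_{0}(z)=4z\exp(2\sqrt{1+z}-2)/(1+\sqrt{1+z})^{2}$.
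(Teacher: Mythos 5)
The statement you are proving is a conjecture, and the paper offers no proof of it: the authors explicitly propose it ``by using Theorem~\ref{bohr} and the approach dealt in \cite{jain2019} (assuming that conjectures are true)'', i.e.\ conditionally on Sok\'{o}\l's coefficient conjecture $|a_n|\le t_n$ for $\mathcal{SL}^{*}$. Your conditional derivation is exactly that intended heuristic and is sound modulo the open coefficient conjecture (which you correctly flag); the only part that is unconditional is the sharpness half, which shows $f_0$ itself violates \eqref{bohrphenomenon} for $r>r_0$, so $r_0\approx0.439229$ is in any case an upper bound for the Bohr radius, matching the paper's remark for the cardioid case.
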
	
\section*{Conflict of interest}
	The authors declare that they have no conflict of interest.

\end{document}